\theoremstyle{plain}
\newtheorem{corollary}{Corollary}
\newtheorem{definition}{Definition}
\newtheorem{lemma}{Lemma}
\newtheorem{proposition}{Proposition}
\newtheorem{remark}{Remark}
\newtheorem{theorem}{Theorem}
\numberwithin{equation}{section}
\begin{document}
\title{Sharp Global well-posedness and scattering for the radial nonlinear intercritical wave equation}
\date{\today}
\author{Benjamin Dodson}
\maketitle

\begin{abstract}
In this paper we prove global well-posedness and scattering for the defocusing, intercritical nonlinear wave equation in dimensions $d \geq 4$ with radial initial data. We prove this for sharp initial data.
\end{abstract}

\section{Introduction}
In this paper we prove global well-posedness and scattering for the defocusing, nonlinear wave equation
\begin{equation}\label{1.1}
u_{tt} - \Delta u + |u|^{p - 1} u = 0, \qquad u(0, x) = u_{0}, \qquad u_{t}(0, x) = u_{1}, \qquad u : \mathbb{R} \times \mathbb{R}^{d} \rightarrow \mathbb{R},
\end{equation}
with radially symmetric initial data in dimensions $d \geq 4$ and $\frac{4}{d - 1} < p - 1 < \frac{4}{d - 2}$. This continues an earlier study we began in \cite{dodson2018global2} and \cite{dodson2021global} when $d = 3$. Note that when $d = 3$, $3 < p < 5$. When $p = \frac{4}{d - 1}$, $(\ref{1.1})$ is a conformal wave equation. See \cite{dodson2018globalAPDE}, \cite{dodson2018global}, \cite{dodson2022global}, and \cite{dodson2023sharp} for global well-posedness and scattering for radially symmetric initial data for the conformal wave equation. See also \cite{miao2020global}. When $p - 1 = \frac{4}{d - 2}$, $(\ref{1.1})$ is the energy--critical nonlinear wave equation. See \cite{kenig2015lectures} and the references therein for the defocusing, nonlinear, energy--critical wave equation.\medskip

Equation $(\ref{1.1})$ possesses a scaling symmetry. Specifically, if $u$ solves $(\ref{1.1})$, then for any $\lambda > 0$,
\begin{equation}\label{1.4}
v(t, x) = \lambda^{\frac{2}{p - 1}} u(\lambda t, \lambda x),
\end{equation}
also solves $(\ref{1.1})$ with initial data
\begin{equation}\label{1.5}
v(0, x) = \lambda^{\frac{2}{p - 1}} u_{0}(\lambda x), \qquad v_{t}(0, x) = \lambda^{\frac{p + 1}{p - 1}} u_{1}(\lambda x).
\end{equation}
For
\begin{equation}\label{1.5.1}
s_{c} = \frac{d}{2} - \frac{2}{p - 1},
\end{equation}
the $\dot{H}^{s_{c}} \times \dot{H}^{s_{c} - 1}$ norm of the initial data is conserved. That is, $\| v(0, x) \|_{\dot{H}^{s_{c}}} = \| u_{0} \|_{\dot{H}^{s_{c}}}$ and $\| v_{t}(0, x) \|_{\dot{H}^{s_{c} - 1}} = \| u_{1} \|_{\dot{H}^{s_{c} - 1}}$ for any $\lambda > 0$. This fact was well-exploited by \cite{lindblad1995existence} to prove ill-posedness for the cubic wave equation with initial data in $\dot{H}^{s} \times \dot{H}^{s - 1}$ for $s < s_{c} = \frac{1}{2}$. See also \cite{christ2003asymptotics}. The idea is as follows. As a general rule for solving equations of the form $(\ref{1.1})$, small data is easier than large data and short times are easier than long times. If $s > s_{c}$, $(\ref{1.4})$ allows one to choose between solving a problem with small data for long times or solving a problem with large data for short times. However, when $s < s_{c}$, $(\ref{1.4})$ shows that small data for short times is as hard as large data for long times, which yields ill-posedness. See Section $3.1$ of \cite{tao2006nonlinear} for more information on this heuristic.\medskip

Therefore, the result proved here is sharp.

\begin{theorem}\label{t1.1}
The initial value problem $(\ref{1.1})$ is globally well-posed and scattering for any radially symmetric initial data $u_{0} \in \dot{H}^{s_{c}}(\mathbb{R}^{d})$ and $u_{1} \in \dot{H}^{s_{c} - 1}(\mathbb{R}^{d})$. Moreover, there exists a function $C(d, \| u_{0} \|_{\dot{H}^{s_{c}}}, \| u_{1} \|_{\dot{H}^{s_{c} - 1}})$,
\begin{equation}\label{1.2}
C : \mathbb{Z}_{\geq 4} \times [0, \infty) \times [0, \infty) \rightarrow [0, \infty),
\end{equation}
such that if $u$ is the solution to $(\ref{1.1})$ with radially symmetric initial data in $u_{0}$, $u_{1}$,
\begin{equation}\label{1.3}
\| u \|_{L_{t,x}^{\frac{(d + 1)(p - 1)}{2}}(\mathbb{R} \times \mathbb{R}^{d})} \leq C(d, \| u_{0} \|_{\dot{H}^{s_{c}}}, \| u_{1} \|_{\dot{H}^{s_{c} - 1}}).
\end{equation}
\end{theorem}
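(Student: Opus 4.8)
The plan is to run the concentration--compactness/rigidity scheme of Kenig--Merle, adapted to the intercritical setting with radial data. First I would set up the local theory and stability theory in the natural scattering space, namely $L_{t,x}^{\frac{(d+1)(p-1)}{2}}$ combined with suitable Strichartz norms at the scaling-critical regularity $\dot H^{s_c} \times \dot H^{s_c - 1}$. The key Strichartz estimate here is the radial improvement: for radial functions one gains extra integrability away from the origin, which is what makes $s_c < 1$ (equivalently $p - 1 < \frac{4}{d-2}$) tractable even though the standard (non-radial) theory at subcritical Sobolev regularity is obstructed. Once small-data global well-posedness and scattering and the perturbation lemma are in place, global well-posedness for all data reduces, by a standard argument, to the a priori bound $(\ref{1.3})$ on the scattering norm.

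Assuming $(\ref{1.3})$ fails, I would extract a minimal-norm blow-up (critical-element) solution via a profile decomposition adapted to the wave equation at $\dot H^{s_c} \times \dot H^{s_c - 1}$. Because we restrict to radial data, the only noncompact symmetries in play are time translations and scaling (no space translations, no Lorentz boosts need be handled for radial data, and modulation by frequency is absorbed by scaling); this is exactly why the radial hypothesis is essential. The profile decomposition, combined with the perturbation lemma, yields a critical element $u_c$ whose trajectory $\{(u_c(t), \partial_t u_c(t))\}$ is precompact in $\dot H^{s_c} \times \dot H^{s_c - 1}$ modulo the scaling parameter $\lambda(t)$. One then rules out such an object. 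For the intercritical problem there is no conserved energy at the critical regularity, so the rigidity argument cannot be the usual Morawetz/virial computation used in the energy-critical case; instead I would use the double Duhamel / interaction-Morawetz-type method together with the radial Sobolev embedding and a careful analysis of the scaling function $\lambda(t)$ (ruling out the self-similar, soliton-like, and ``low-to-high frequency cascade'' scenarios separately, and controlling $\lambda(t)$ on finite time intervals so that no concentration can occur at a finite time).

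More concretely, I expect the rigidity step to split into: (i) showing $\lambda(t)$ cannot go to infinity or zero in finite time, which rules out finite-time blow-up of the critical element and shows it is global; (ii) in the global case, using the compactness to establish additional regularity and decay (a ``no-waste'' Duhamel formula in both time directions plus bilinear/radial estimates upgrades the solution into $L^2$-based spaces with extra integrability), so that a frequency-localized or spatially-localized Morawetz inequality becomes available; and (iii) deriving a contradiction from that Morawetz estimate, which forces $u_c \equiv 0$. The main obstacle, and the technical heart of the paper, is step (ii)--(iii): without an energy at the critical level one must manufacture the coercive quantity by hand, and closing the Morawetz/Duhamel estimates requires the radial Strichartz gains to beat the scaling. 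Getting the bound $C$ in $(\ref{1.3})$ to depend only on $d$ and the norms of the data (rather than on the solution) is automatic once the contradiction argument is run as a compactness argument, since any putative sequence of data with blow-up would again produce a critical element.
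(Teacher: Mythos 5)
Your proposal is a Kenig--Merle concentration--compactness/rigidity scheme, and this is \emph{not} the route the paper takes; moreover, as written it has a structural gap that prevents it from yielding the unconditional result of Theorem \ref{t1.1}.

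The gap is at the passage from ``critical element'' to ``precompact trajectory.'' You want to extract a minimal non-scattering solution $u_c$ and conclude that $\{(u_c(t), \partial_t u_c(t))\}_{t \in I_{\max}}$ is precompact in $\dot H^{s_c} \times \dot H^{s_c-1}$ modulo scaling. The profile decomposition that produces this compactness can only be applied to a sequence $(u_c(t_k), \partial_t u_c(t_k))$ that is \emph{bounded} in $\dot H^{s_c} \times \dot H^{s_c-1}$; but for the intercritical equation there is no conserved or monotone quantity at the critical regularity level, so there is no a priori reason the trajectory norm of the critical element stays bounded. This is exactly the dichotomy between type-I blowup (critical norm grows) and type-II blowup (critical norm stays bounded) that the paper discusses in its introduction. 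The concentration--compactness argument, run as you describe, rules out only type-II blowup; indeed this is precisely what the prior works \cite{shen2014energy} and \cite{dodson2020scattering} do, and the paper points out that Shen's result applies equally to the focusing equation $(\ref{1.11})$, which admits finite-time blowup — so those results are necessarily \emph{conditional} on the a priori bound $\sup_t \|(u(t), \partial_t u(t))\|_{\dot H^{s_c}\times\dot H^{s_c-1}} < \infty$. Your proposal does not supply that a priori bound, and the suggestion to ``establish additional regularity and decay'' from compactness is circular here: the compactness you want to exploit is exactly what is missing.

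The paper's actual argument is quite different and is designed precisely to produce the a priori estimate that concentration--compactness cannot. It is a Fourier-truncation / conformal-energy argument in the spirit of \cite{kenig2000global} and the author's earlier works \cite{dodson2022global}, \cite{dodson2023sharp}. The initial data are split as $u_0 = v_0 + w_0$, $u_1 = v_1 + w_1$ with $(v_0, v_1) = (\chi(x/R) P_{\leq N} u_0, \chi(x/R) P_{\leq N} u_1)$ having finite conformal energy and $(w_0, w_1)$ small in $\dot H^{s_c} \times \dot H^{s_c-1}$. One then shows the conformal energy $\mathcal E(t)$ of $v$ (from $(\ref{5.15.3})$) obeys a differential inequality whose error terms are controlled by the smallness of $w$ (via the radial Sobolev embedding $(\ref{1.17})$, the weighted radial Strichartz estimate of Lemma \ref{l4.4}, and the Morawetz/local-energy-decay bounds of Propositions \ref{p5.3}--\ref{p5.5}). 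Integrating gives $(\ref{1.16.1})$, hence $(\ref{5.2.1})$, hence $(\ref{1.3})$ via Lemma \ref{l2.2}. Notably, the radial hypothesis is not used to simplify the profile decomposition (space translations can be handled nonradially, as in \cite{dodson2020scattering}); it is used for the hard estimates $(\ref{1.16})$, $(\ref{1.17})$, Lemma \ref{l4.4}, and the modified small-data Theorem \ref{t7.1} when $d > 5$. The profile decomposition enters only at the very end (Section 8), after unconditional scattering is already proved with a bound depending on the full profile of the data, to upgrade that bound to one depending only on $d$ and the $\dot H^{s_c} \times \dot H^{s_c - 1}$ norms.

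In short: the piece of your plan that is correct is the final compactness upgrade (data-dependent bound $\Rightarrow$ norm-dependent bound), which is indeed how the paper obtains $(\ref{1.2})$--$(\ref{1.3})$. But your main engine — critical element plus rigidity — would at best reproduce the conditional type-II results already in the literature; it does not prove Theorem \ref{t1.1} because the trajectory compactness of a putative critical element is not available without exactly the a priori estimate the theorem is asserting.
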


\begin{definition}[Global well-posedness and scattering]
Here we use the standard definitions of global well-posedness and scattering. Specifically, global well-posedness means that a solution to $(\ref{1.1})$ exists, the solution is unique, and the solution depends continuously on the initial data. In this paper, a solution means a solution $u \in L_{t, loc}^{\frac{(d + 1)(p - 1)}{2}} L_{x}^{\frac{(d + 1)(p - 1)}{2}}$ which satisfies Duhamel's principle,
\begin{equation}\label{1.3.1}
u(t) = \cos(t \sqrt{-\Delta}) u_{0} + \frac{\sin(t \sqrt{-\Delta})}{\sqrt{-\Delta}} u_{1} - \int_{0}^{t} \frac{\sin((t - \tau) \sqrt{-\Delta}}{\sqrt{-\Delta}} |u(\tau)|^{p - 1} u(\tau) d\tau.
\end{equation}
By scattering, we mean that there exist $u_{0}^{+}, u_{0}^{-} \in \dot{H}^{s_{c}}$, $u_{1}^{+}, u_{1}^{-} \in \dot{H}^{s_{c} - 1}$, such that,
\begin{equation}\label{1.3.2}
\lim_{t \rightarrow \infty} \| u(t) - \cos(t \sqrt{-\Delta}) u_{0}^{+} - \frac{\sin(t \sqrt{-\Delta})}{\sqrt{-\Delta}} u_{1}^{+} \|_{\dot{H}^{s_{c}}} = 0,
\end{equation}
\begin{equation}
\lim_{t \rightarrow \infty} \| \partial_{t}(u(t) - \cos(t \sqrt{-\Delta}) u_{0}^{+} - \frac{\sin(t \sqrt{-\Delta})}{\sqrt{-\Delta}} u_{1}^{+}) \|_{\dot{H}^{s_{c} - 1}} = 0,
\end{equation}
\begin{equation}\label{1.3.3}
\lim_{t \rightarrow -\infty} \| u(t) - \cos(t \sqrt{-\Delta}) u_{0}^{-} - \frac{\sin(t \sqrt{-\Delta})}{\sqrt{-\Delta}} u_{1}^{-} \|_{\dot{H}^{s_{c}}} = 0,
\end{equation}
and
\begin{equation}\label{1.3.4}
\lim_{t \rightarrow -\infty} \| \partial_{t}(u(t) - \cos(t \sqrt{-\Delta}) u_{0}^{-} - \frac{\sin(t \sqrt{-\Delta})}{\sqrt{-\Delta}} u_{1}^{-}) \|_{\dot{H}^{s_{c} - 1}} = 0.
\end{equation}
\end{definition}

\subsection{Outline of previous results}
It has been known for a long time that global well-posedness and scattering hold for $(\ref{1.1})$ with initial data in $\dot{H}^{1} \times L^{2}$ that decays sufficiently fast as $|x| \rightarrow \infty$, see \cite{strauss1981nonlinear} and \cite{strauss1968decay}. Observe that such initial data has the conserved energy,
\begin{equation}\label{1.7}
E(u) = \frac{1}{2} \int |\nabla u(t, x)|^{2} dx + \frac{1}{2} \int |u_{t}(t, x)|^{2} dx + \frac{1}{p + 1} \int |u(t,x)|^{p + 1} dx.
\end{equation}
Conservation of the conformal energy gives scattering, which will be shown in section three.\medskip

For initial data in $\dot{H}^{1} \cap \dot{H}^{s_{c}} \times L^{2} \cap \dot{H}^{s_{c} - 1}$, global well-posedness follows easily from $(\ref{1.7})$ and the local well-posedness result of \cite{lindblad1995existence}.
\begin{theorem}\label{t1.2}
The initial value problem $(\ref{1.1})$ is locally well-posed on some interval $(-T, T)$ for any $(u_{0}, u_{1}) \in \dot{H}^{s_{c}} \times \dot{H}^{s_{c} - 1}$, where $T = T(u_{0}, u_{1})$. Global well-posedness and scattering hold for small initial data.

Moreover, the solution satisfies
\begin{equation}\label{1.8}
u \in L_{t}^{\infty} \dot{H}^{s_{c}}((-T, T) \times \mathbb{R}^{d}), \qquad u_{t} \in L_{t}^{\infty} \dot{H}^{s_{c} - 1}((-T, T) \times \mathbb{R}^{d}), \qquad u \in L_{t, loc}^{\frac{(d + 1)(p - 1)}{2}} L_{x}^{\frac{(d + 1)(p - 1)}{2}}((-T, T) \times \mathbb{R}^{d}).
\end{equation}

Furthermore, if the solution only exists on an interval $[0, T_{+})$ for some $T_{+} < \infty$, then
\begin{equation}\label{1.9}
\lim_{T \nearrow T_{+}} \| u \|_{L_{t,x}^{\frac{(d + 1)(p - 1)}{2}}([0, T] \times \mathbb{R}^{d})} = \infty.
\end{equation}
By time reversal symmetry, the analogous result holds on $(-T_{-}, 0]$.
\end{theorem}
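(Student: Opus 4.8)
The plan is to run the usual Strichartz-estimate / contraction-mapping argument of Lindblad and Sogge \cite{lindblad1995existence}, at the critical Sobolev level $\dot H^{s_c}$. Write $q = \frac{(d+1)(p-1)}{2}$, the unique Lebesgue exponent for which $L_{t,x}^{q}$ is invariant under the scaling $(\ref{1.4})$--$(\ref{1.5})$; note that the hypothesis $p - 1 > \frac{4}{d-1}$ gives $q > \frac{2(d+1)}{d-1}$, so $(q,q)$ is a wave--admissible exponent pair with an associated gain of exactly $s_c$ derivatives, and that $\frac12 < s_c < 1$ (the intercritical window), so $s_c - 1 \in (-\frac12, 0)$. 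The first step is the linear theory: the homogeneous Strichartz inequality gives
\begin{equation}
\Big\| \cos(t\sqrt{-\Delta})\, u_0 + \frac{\sin(t\sqrt{-\Delta})}{\sqrt{-\Delta}}\, u_1 \Big\|_{L_{t,x}^{q}(\mathbb{R}\times\mathbb{R}^d)} \lesssim \| u_0 \|_{\dot H^{s_c}} + \| u_1 \|_{\dot H^{s_c - 1}},
\end{equation}
and the elementary $L^2$ bounds for $\cos(t\sqrt{-\Delta})$ and $\sin(t\sqrt{-\Delta})$ put the linear flow in $C_t(\mathbb{R}; \dot H^{s_c}) \cap C_t^1(\mathbb{R}; \dot H^{s_c - 1})$.

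Second, I would set up the fixed point: seek $u$ as the unique fixed point of the Duhamel map
\begin{equation}
\Phi(u)(t) = \cos(t\sqrt{-\Delta})\, u_0 + \frac{\sin(t\sqrt{-\Delta})}{\sqrt{-\Delta}}\, u_1 - \int_0^t \frac{\sin((t - \tau)\sqrt{-\Delta})}{\sqrt{-\Delta}}\, |u(\tau)|^{p-1} u(\tau)\, d\tau
\end{equation}
on a small ball of $L_{t,x}^{q}(I \times \mathbb{R}^d) \cap C_t(I; \dot H^{s_c}) \cap C_t^1(I; \dot H^{s_c - 1})$ with $I = (-T, T)$. The contraction reduces to one nonlinear estimate: an inhomogeneous Strichartz bound putting the Duhamel term into the above norm by $\big\| |\nabla|^{s_c - 1}(|u|^{p-1}u) \big\|_{N}$ for a suitable space--time space $N$, followed by
\begin{equation}
\big\| |\nabla|^{s_c - 1}(|u|^{p-1}u) \big\|_{N} \lesssim \| u \|_{L_{t,x}^{q}}^{\,p},
\end{equation}
together with the companion difference estimate based on $\big| |u|^{p-1}u - |v|^{p-1}v \big| \lesssim (|u|^{p-1} + |v|^{p-1}) |u - v|$ needed to close the Lipschitz bound for $\Phi$.

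The hard part will be that nonlinear estimate. The integrability exponent that the scaling $(\ref{1.4})$ forces on $|\nabla|^{s_c-1}(|u|^{p-1}u)$ is \emph{not} dual to a wave--admissible pair, so $N$ cannot be a diagonal space reached by a naive H\"older step; one must instead distribute the integrability of $|u|^{p-1}u$ across the $L^q_{t,x}$ norm and the lower-order norms already controlled by the equation, choosing a non-diagonal mixed-norm Strichartz space (or invoking the sharp inhomogeneous estimates at the requisite non-dual exponents). This is compounded by the need for a fractional Leibniz / chain rule to move the negative-order derivative $|\nabla|^{s_c - 1}$ across $|u|^{p-1}u$; since $p - 1 < 1$ once $d \ge 6$, the nonlinearity is merely H\"older continuous there, and the chain rule must be applied in the form valid for such nonlinearities, with the resulting H\"older exponents matched against the available Strichartz exponents. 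It is precisely the compatibility of all these exponents that consumes the hypotheses $\frac{4}{d-1} < p - 1 < \frac{4}{d-2}$.

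Granting this estimate, the remaining assertions follow by standard arguments. For $\| u_0 \|_{\dot H^{s_c}} + \| u_1 \|_{\dot H^{s_c-1}}$ small the fixed-point ball can be taken with $I = \mathbb{R}$, giving global existence; since the resulting $L^q_{t,x}(\mathbb{R} \times \mathbb{R}^d)$ norm is finite, $\int_0^{\pm\infty} \frac{\sin((t-\tau)\sqrt{-\Delta})}{\sqrt{-\Delta}} |u|^{p-1}u\, d\tau$ converges in $\dot H^{s_c}$ and its time derivative in $\dot H^{s_c-1}$, from which the scattering states $u_0^{\pm}, u_1^{\pm}$ of $(\ref{1.3.2})$--$(\ref{1.3.4})$ are read off. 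For data of arbitrary size, $\big\| \cos(t\sqrt{-\Delta}) u_0 + \frac{\sin(t\sqrt{-\Delta})}{\sqrt{-\Delta}} u_1 \big\|_{L^q_{t,x}((-T,T) \times \mathbb{R}^d)} \to 0$ as $T \to 0$ by dominated convergence (the full-line norm being finite), so the contraction runs on a short enough $(-T, T)$, yielding $(\ref{1.8})$. Uniqueness in the class $(\ref{1.8})$ follows by subdividing a common interval of existence into finitely many pieces on which $\| u \|_{L^q_{t,x}}$ is small and applying the difference estimate on each; continuous dependence is immediate from the same contraction bounds. Finally, for $(\ref{1.9})$: if $T_+ < \infty$ but $\| u \|_{L^q_{t,x}([0, T_+) \times \mathbb{R}^d)} < \infty$, then for $\eps$ small both $\| u \|_{L^q_{t,x}([T_+ - \eps, T_+) \times \mathbb{R}^d)}$ and the corresponding Duhamel tail are small, so the local theory applied at a time slightly below $T_+$ extends $u$ past $T_+$, contradicting maximality; time reversal gives the statement on $(-T_-, 0]$.
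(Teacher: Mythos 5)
The paper does not supply a proof of Theorem~\ref{t1.2} at all --- it cites \cite{lindblad1995existence} outright, and the only place it records the mechanism is in the remark after Theorem~\ref{t4.2}, via the estimate $(\ref{4.5})$ (Strichartz with $|\nabla|^{s_c-\frac12}$ in the diagonal admissible pair $L_{t,x}^{\frac{2(d+1)}{d-1}}$ and dual $L_{t,x}^{\frac{2(d+1)}{d+3}}$) together with the fractional product/chain rule $(\ref{4.7.1})$. Your plan follows the same Lindblad--Sogge contraction-mapping outline, so at that level you are on track, and your treatment of local existence, uniqueness by interval subdivision, the blow-up criterion $(\ref{1.9})$, and scattering for small data is all standard and fine.

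The one step where you deviate from what the paper records, and where the plan as written has a gap, is the nonlinear estimate. You propose to close in a ball of $L^q_{t,x}\cap C_t\dot H^{s_c}\cap C^1_t\dot H^{s_c-1}$ alone with a bound of the form $\|\,|\nabla|^{s_c-1}(|u|^{p-1}u)\|_{N}\lesssim\|u\|_{L^q_{t,x}}^{p}$. As you yourself observe, the diagonal dual pair $(q/p,q/p)$ with $q=\frac{(d+1)(p-1)}{2}$ is \emph{not} wave-admissible in the intercritical range, so this bound is not available from the standard Strichartz inequality of Theorem~\ref{t4.1}. The fix the paper invokes from \cite{lindblad1995existence} is \emph{not} ``non-dual inhomogeneous Strichartz''; it is to move to the regularity-$\frac12$ admissible endpoint pair and apply $|\nabla|^{s_c-\frac12}$ (a genuinely \emph{positive} order since $s_c>\frac12$), giving $(\ref{4.7.1})$:
\begin{equation}
\||\nabla|^{s_{c}-\frac12}(|u|^{p-1}u)\|_{L^{\frac{2(d+1)}{d+3}}_{t,x}}\lesssim \|u\|_{L^{q}_{t,x}}^{p-1}\,\||\nabla|^{s_{c}-\frac12}u\|_{L^{\frac{2(d+1)}{d-1}}_{t,x}}.
\end{equation}
Note the right-hand side is \emph{not} $\|u\|_{L^q}^p$: it contains the auxiliary norm $\||\nabla|^{s_c-\frac12}u\|_{L^{\frac{2(d+1)}{d-1}}_{t,x}}$, which must therefore be included in the iteration ball alongside $L^q_{t,x}$. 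Your ball as specified does not control this quantity, so the Lipschitz bound for $\Phi$ cannot close in the space you set up. Either enlarge the ball to $L^q_{t,x}\cap |\nabla|^{-(s_c-\frac12)}L^{\frac{2(d+1)}{d-1}}_{t,x}\cap C_t\dot H^{s_c}\cap C^1_t\dot H^{s_c-1}$ and use $(\ref{4.7.1})$ (the paper's route), or commit to a genuine Foschi-type non-dual inhomogeneous estimate and verify the exponent constraints --- but then you must actually exhibit the admissible $N$, which the proposal leaves open. Finally, your H\"older-continuity concern about the fractional chain rule when $p-1<1$ (so $d\geq6$) is real but under control in the paper's formulation: one checks $s_c-\frac12<p-1$ throughout the intercritical window, which is exactly the condition under which the chain rule for merely $C^{1,p-1}$ nonlinearities applies. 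That check, together with the displayed estimate, is what should replace the schematic ``$|\nabla|^{s_c-1}$ into a suitable $N$'' step.
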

\noindent Therefore, $(\ref{1.1})$ has a local solution, and by $(\ref{1.7})$, blowup cannot occur in finite time.\medskip

Now suppose we take initial data of the form
\begin{equation}\label{1.6}
u_{0} = c_{1} e^{-|x|^{2}} + c_{2} \lambda_{1}^{\frac{2}{p - 1}} e^{-|\lambda_{1} x|^{2}} + c_{3} \lambda_{2}^{\frac{2}{p - 1}} e^{-|\lambda_{2} x|^{2}}, \qquad u_{1} = 0, \qquad \lambda_{1} \ll 1 \ll \lambda_{2}.
\end{equation}
The results of \cite{strauss1981nonlinear} and \cite{strauss1968decay} imply scattering for $(\ref{1.1})$ with initial data given by $(\ref{1.6})$ with $c_{1} = c_{3} = 0$. Furthermore, when $c_{1} = c_{3} = 0$, the scattering size, the $\| u \|_{L_{t,x}^{\frac{(d + 1)(p - 1)}{2}}}$ norm, is polynomially dependent on $c_{2}$. The scaling symmetry in $(\ref{1.4})$ implies the same estimates hold when $c_{2} = c_{3} = 0$ or $c_{1} = c_{2} = 0$.

For generic $c_{1}$, $c_{2}$, and $c_{3}$, the $L_{t,x}^{\frac{(d + 1)(p - 1)}{2}}$ bounds obtained from the results in \cite{strauss1981nonlinear} and \cite{strauss1968decay} become weaker as $\lambda_{1} \searrow 0$ and $\lambda_{2} \nearrow \infty$. This is contrary to our intuition, since one would imagine that, using the decoupling arguments in Section $7$, we should be able to approximate a solution to $(\ref{1.1})$ with initial data given by $(\ref{1.6})$ by a sum of the solutions to $(\ref{1.1})$ with initial data given by each term in $(\ref{1.6})$ separately. This was one motivation for proving Theorem $\ref{t1.1}$.\medskip

If $c_{3} \ll 1$, one could approximate the initial data in $(\ref{1.6})$ by initial data with $c_{3} = 0$, which would then have finite energy. This is exactly the Fourier truncation method. Using the Fourier truncation method, \cite{kenig2000global} proved global well-posedness for $(\ref{1.1})$ when $p = 3$, $d = 3$ (for convenience this will henceforth be called the cubic problem) for initial data in $H^{s} \times H^{s - 1}$ for any $s > \frac{3}{4}$. This method, introduced by \cite{bourgain1998refinements} for the nonlinear Schr{\"o}dinger equation, utilizes the smoothing effect of the Duhamel term
\begin{equation}\label{1.10}
\int_{0}^{t} \frac{\sin((t - \tau) \sqrt{-\Delta})}{\sqrt{-\Delta}} u(\tau)^{3} d\tau.
\end{equation}
The data was then split into a high frequency piece that was small and a low frequency piece that is in $\dot{H}^{1} \times L^{2}$. Global well-posedness holds for $(\ref{1.1})$ with either piece as the initial data (from Theorem $\ref{t1.2}$ and $(\ref{1.7})$). For $s > \frac{3}{4}$, it is possible to ``paste" the two solutions together and obtain a solution to $(\ref{1.1})$ for initial data in $H^{s} \times H^{s - 1}$.

This work was subsequently extended by many authors. See \cite{gallagher2003global}, \cite{bahouri2006global}, \cite{roy2007adapted}, \cite{roy2007global}, \cite{dodson2018global1}, \cite{dodson2019global} for subsequent improvements on this result.\medskip

A second approach that has proven to be very fruitful is the study of type two blowup. There are two different ways in which scattering can fail. The first way is if the $\dot{H}^{s_{c}}$ norm is unbounded. Since the solution to the linear wave equation is a unitary operator, it is clear that one of $(\ref{1.3.1})$--$(\ref{1.3.4})$ would fail. This is called ``type one blowup".

Type two blowup is if scattering fails and yet the $\dot{H}^{s_{c}} \times \dot{H}^{s_{c} - 1}$ norm remain bounded. This behavior is frequently observed for a soliton, although see also the pseudoconformal transformation of the soliton for the nonlinear Schr{\"o}dinger equation (see for example \cite{merle1993determination}).

Type two blowup results have been proved in three dimensions for the intercritical case, that is $\frac{1}{2} < s_{c} < 1$. These results have been proved for both radial \cite{shen2014energy} and nonradial \cite{dodson2020scattering} initial data. The proof uses the concentration compactness argument to exclude the existence of a non-scattering solution of minimal size. It is worth noting that the results in \cite{shen2014energy} hold for both the defocusing equations $(\ref{1.1})$ and the focusing equations:
\begin{equation}\label{1.11}
u_{tt} - \Delta u - |u|^{p - 1} u = 0.
\end{equation}
This is because, unlike in the energy-critical case, there does not exist a soliton solution to $(\ref{1.1})$ that lies in $\dot{H}^{s_{c}} \times \dot{H}^{s_{c} - 1}$. This is in contrast to the focusing, energy--critical problem \cite{kenig2008scattering}. The same should hold in higher dimensions as well.\medskip

Still, for the focusing wave equation, $(\ref{1.11})$, there do exist solutions for which the $\dot{H}^{s_{c}} \times \dot{H}^{s_{c} - 1}$ is unbounded. Indeed, for this equation, the energy is given by
\begin{equation}\label{1.12}
E(u) = \frac{1}{2} \int |\nabla u(t,x)|^{2} dx + \frac{1}{2} \int |u_{t}(t, x)|^{2} dx - \frac{1}{p + 1} \int |u(t,x)|^{p + 1} dx,
\end{equation}
which unlike $(\ref{1.7})$ does not prevent any norms of a solution to $(\ref{1.11})$ from getting arbitrarily large. In fact, it is well known that there exist solutions to $(\ref{1.1})$ for which the $\dot{H}^{s_{c}} \times \dot{H}^{s_{c} - 1}$ norm is unbounded. See \cite{duyckaerts2023global} for the state of the art in this direction and a description of prior results.

\subsection{Outline of the argument}
The proof of Theorem $\ref{t1.1}$ is similar to the argument in \cite{dodson2023sharp}. Specifically, inspired by \cite{kenig2000global}, we split the initial data into two pieces,
\begin{equation}\label{1.13}
u_{0} = v_{0} + w_{0}, \qquad u_{1} = v_{1} + w_{1},
\end{equation}
where $(v_{0}, v_{1})$ has finite conformal energy and $(w_{0}, w_{1})$ has small $\dot{H}^{s_{c}} \times \dot{H}^{s_{c} - 1}$ norm.

Following \cite{strauss1981nonlinear} and \cite{lindblad1995existence}, we know that $(\ref{1.1})$ is scattering with initial data $(v_{0}, v_{1})$ and $(w_{0}, w_{1})$. Therefore, it remains to handle the cross-terms. The contribution of the cross terms $F$ is of the form
\begin{equation}\label{1.14}
\aligned
\langle (t + |x|) Lv + (d - 1) v, (t + |x|) F \rangle, \qquad L = \partial_{t} + \frac{x}{|x|} \cdot \nabla, \\
\langle (t - |x|) \underline{L}v + (d - 1) v, (t + |x|) F \rangle, \qquad \underline{L} = \partial_{t} - \frac{x}{|x|} \cdot \nabla,
\endaligned
\end{equation}
where $F$ contains terms of the form $|w|^{p - 1} |v|$. 

We begin with initial data in a critical Besov space. Following \cite{dodson2022global},
\begin{equation}\label{1.15}
(\ref{1.14}) \lesssim \| (t + |x|) Lv + (d - 1) v \|_{L^{2}} \| (t + |x|) |w|^{\frac{p - 1}{2}} \|_{L^{\infty}} \| w \|_{L_{x}^{p + 1}}^{\frac{p - 1}{2}} \| v \|_{L_{x}^{p + 1}}.
\end{equation}
Combining the radially symmetric Sobolev embedding theorem and the dispersive estimate,
\begin{equation}\label{1.16}
\| (t + |x|) |w|^{\frac{p - 1}{2}} \|_{L^{\infty}} \lesssim 1,
\end{equation}
and therefore $(\ref{1.15})$ implies a bound on
\begin{equation}\label{1.16.1}
\| \frac{\mathcal E(t)}{t^{2}} \|_{L_{t}^{\frac{1}{2(1 - s_{c})}}(\mathbb{R})}.
\end{equation}
Since $\frac{\mathcal E(t)}{t^{2}}$ controls the norm $\| v(t) \|_{L_{x}^{p + 1}}^{p + 1}$ norm, which combined with Strichartz estimates implies
\begin{equation}\label{1.16.2}
\| u \|_{L_{t}^{\frac{p + 1}{2(1 - s_{c})}} L_{x}^{p + 1}(\mathbb{R} \times \mathbb{R}^{d})} < \infty.
\end{equation}
When $s_{c} = \frac{1}{2}$ and $p - 1 = \frac{4}{d - 1}$, $p + 1 = \frac{2(d + 1)}{d - 1}$, which proves $(\ref{1.3})$. For $\frac{1}{2} < s_{c} < 1$ with radially symmetric initial data Lemma $\ref{l2.2}$ combined with $(\ref{1.16.2})$ implies $(\ref{1.3})$. For general initial data, the bound $(\ref{1.16.2})$ need not imply $(\ref{1.3})$.
\begin{remark}
See \cite{dodson2021global} for the non-radial case, where we only obtain bounds for $3 < p \leq 4$. The proof of Lemma $\ref{l2.2}$ is heavily reliant on the radial symmetry.
\end{remark}

For initial data in $\dot{H}^{s_{c}} \times \dot{H}^{s_{c} - 1}$, we still have the bound
\begin{equation}\label{1.17}
\| |x| |w|^{\frac{p - 1}{2}} \|_{L^{\infty}} \lesssim 1,
\end{equation}
from the radial Sobolev embedding theorem,
along with the bound
\begin{equation}\label{1.17.1}
\| t |w|^{\frac{p - 1}{2}} \|_{L^{\infty}(|x| \geq \delta |t|)} \lesssim \frac{1}{\delta}.
\end{equation}

On the other hand, for general initial data in $\dot{H}^{s_{c}} \times \dot{H}^{s_{c} - 1}$, there is no reason to think that the dispersive estimate $\| t |w|^{\frac{p - 1}{2}} \|_{L^{\infty}}$ will hold, since the $\dot{H}^{s_{c}} \times \dot{H}^{s_{c} - 1}$ norm is invariant under the operator
\begin{equation}\label{1.18}
\begin{pmatrix}
\cos(t \sqrt{-\Delta}) & \frac{\sin(t \sqrt{-\Delta})}{\sqrt{-\Delta}} \\ -\sqrt{-\Delta} \sin(t \sqrt{-\Delta}) & \cos(t \sqrt{-\Delta})
\end{pmatrix}.
\end{equation}
Instead, we use that the square $L^{2}$ norm of $\nabla_{t,x} v$ is bounded by the conformal energy divided by $\frac{1}{t^{2}}$, which gives us good decay to cancel out the contribution of $t |w|^{p - 1} |v|$. We use the Morawetz estimate and the local energy decay to do this, which gives a bound on the scattering size.\medskip

In section two, we recall some Strichartz estimates and the small data result of \cite{lindblad1995existence}. In section three we prove some conformal energy estimates and the Besov space result for $\frac{1}{2} < s_{c} < 1$. In section four we prove a Morawetz etimate. In section five, we prove scattering in the $d = 4, 5$ case. 

In section six, we prove a modified small data result in dimensions $d > 5$. This is actually the main new difficulty of the paper. For $s_{c} = \frac{1}{2}$, it is enough to use Strichartz estimates to prove small data scattering results. Such estimates comply well with spatial truncations. When $\frac{1}{2} < s_{c} < 1$, the standard small data arguments utilize fractional derivatives, which are nonlocal operators. Thus, such estimates do not comply easily with spatial truncations. Instead, we use local energy estimates and the radial symmetry of the solution to prove scattering for the small data wave equation.

In section seven, we prove scattering in the $d > 5$ case. Finally, in section eight we complete the proof of Theorem $\ref{t1.1}$ using the profile decomposition.

\section{Strichartz estimates and small data results}
Global well-posedness and scattering for $(\ref{1.1})$ with small initial data is a direct result of Strichartz estimates.
\begin{theorem}[Strichartz estimates]\label{t4.1}
Let $I$ be a time interval and let $u : I \times \mathbb{R}^{d} \rightarrow \mathbb{C}$ be a Schwartz solution to the wave equation,
\begin{equation}\label{4.1}
u_{tt} - \Delta u + F = 0, \qquad u(t_{0}, \cdot) = u_{0}, \qquad u_{t}(t_{0}, \cdot) = u_{1}, \qquad \text{for some} \qquad t_{0} \in I.
\end{equation}
Then for $s \geq 0$, $2 \leq p, \tilde{p} \leq \infty$, $2 \leq q, \tilde{q} < \infty$ obeying the scaling conditions
\begin{equation}\label{4.2}
\frac{1}{p} + \frac{d}{q} = \frac{d}{2} - s = \frac{1}{\tilde{p}'} + \frac{d}{\tilde{q}'} - 2,
\end{equation}
and the wave admissibility conditions
\begin{equation}\label{4.3}
\frac{1}{p} + \frac{d - 1}{2q}, \frac{1}{\tilde{p}} + \frac{d - 1}{2 \tilde{r}} \qquad \leq \qquad \frac{d - 1}{4},
\end{equation}
\begin{equation}\label{4.4}
\| u \|_{L_{t}^{p} L_{x}^{q}(I \times \mathbb{R}^{d})} + \| u \|_{L_{t}^{\infty} \dot{H}_{x}^{s}(I \times \mathbb{R}^{d})} + \| \partial_{t} u \|_{L_{t}^{\infty} \dot{H}_{x}^{s - 1}(I \times \mathbb{R}^{d})} \lesssim_{s, p, q, \tilde{p}, \tilde{q}, s, d} \| u_{0} \|_{\dot{H}_{x}^{s}(\mathbb{R}^{d})} + \| u_{1} \|_{\dot{H}_{x}^{s - 1}(\mathbb{R}^{d})} + \| F \|_{L_{t}^{\tilde{p}'} L_{x}^{\tilde{q}'}(I \times \mathbb{R}^{d})}.
\end{equation}
\end{theorem}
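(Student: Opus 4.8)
\noindent The plan is to reduce $(\ref{4.4})$ to the fixed-frequency dispersive estimate for the half-wave propagator and then run the standard $TT^{*}$ machinery, following Lindblad--Sogge \cite{lindblad1995existence}, Ginibre--Velo, and Keel--Tao. By Duhamel's formula the solution of $(\ref{4.1})$ is
\[
u(t)=\cos\big((t-t_{0})\sqrt{-\Delta}\big)u_{0}+\frac{\sin\big((t-t_{0})\sqrt{-\Delta}\big)}{\sqrt{-\Delta}}u_{1}-\int_{t_{0}}^{t}\frac{\sin\big((t-\tau)\sqrt{-\Delta}\big)}{\sqrt{-\Delta}}F(\tau)\,d\tau,
\]
and I would bound the free part and the retarded part separately. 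Writing $\cos$ and $\sin$ in terms of $e^{\pm i(t-t_{0})\sqrt{-\Delta}}$, the free part is $e^{\pm i(t-t_{0})\sqrt{-\Delta}}$ applied to data controlled in $\dot{H}^{s}$ (the $u_{1}$ contribution picks up a factor $(\sqrt{-\Delta})^{-1}$, hence lands in $\dot{H}^{s}$ starting from $\dot{H}^{s-1}$), so it suffices to prove $\|e^{it\sqrt{-\Delta}}g\|_{L_{t}^{p}L_{x}^{q}}\lesssim\|g\|_{\dot{H}^{s}}$ under $(\ref{4.2})$--$(\ref{4.3})$.

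For the free estimate I would perform a Littlewood--Paley decomposition $g=\sum_{k}P_{k}g$ into frequencies $|\xi|\sim 2^{k}$. The key input is the frequency-localized dispersive bound
\[
\|e^{it\sqrt{-\Delta}}P_{k}h\|_{L_{x}^{\infty}}\lesssim 2^{kd}\big(1+2^{k}|t|\big)^{-(d-1)/2}\|P_{k}h\|_{L_{x}^{1}},
\]
which, interpolated against the conservation law $\|e^{it\sqrt{-\Delta}}P_{k}h\|_{L_{x}^{2}}=\|P_{k}h\|_{L_{x}^{2}}$, shows that $U(t)U(s)^{*}$ with $U(t)=e^{it\sqrt{-\Delta}}P_{k}$ maps $L_{x}^{q'}\to L_{x}^{q}$ with a time decay governed by the exponent $\sigma=(d-1)/2$ at the $L_{x}^{1}\to L_{x}^{\infty}$ endpoint. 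Because $d\geq 4$ forces $\sigma>1$, the Keel--Tao abstract theorem applies and, after undoing the frequency rescaling and using the scaling relation $(\ref{4.2})$, gives
\[
\|e^{it\sqrt{-\Delta}}P_{k}g\|_{L_{t}^{p}L_{x}^{q}}\lesssim 2^{ks}\|P_{k}g\|_{L_{x}^{2}}
\]
for every pair obeying $(\ref{4.2})$--$(\ref{4.3})$ with $2\leq q<\infty$, \emph{including the endpoint}. To sum over $k$ I would use the Littlewood--Paley square-function bound in $L_{x}^{q}$ (valid since $q\geq 2$), then Minkowski's inequality in $L_{t}^{p}$ (valid since $p\geq 2$) and almost-orthogonality, obtaining $\|e^{it\sqrt{-\Delta}}g\|_{L_{t}^{p}L_{x}^{q}}\lesssim\big(\sum_{k}2^{2ks}\|P_{k}g\|_{L_{x}^{2}}^{2}\big)^{1/2}\sim\|g\|_{\dot{H}^{s}}$.

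For the retarded term I would first establish the untruncated inhomogeneous estimate $\|\int_{\mathbb{R}}\frac{\sin((t-\tau)\sqrt{-\Delta})}{\sqrt{-\Delta}}F(\tau)\,d\tau\|_{L_{t}^{p}L_{x}^{q}}\lesssim\|F\|_{L_{t}^{\tilde{p}'}L_{x}^{\tilde{q}'}}$ by composing the free estimate above with its dual; the composition factors through $L_{x}^{2}$ at the intermediate time, which is where the dual exponents $\tilde{p},\tilde{q}$ and the second equality in $(\ref{4.2})$ enter. The time-ordered integral $\int_{t_{0}}^{t}$ is then recovered from the untruncated one by the Christ--Kiselev lemma, which is available except when $(p,q)$ and $(\tilde{p},\tilde{q})$ are both the $p=\tilde{p}=2$ endpoint; that remaining double-endpoint case is handled directly via the bilinear form estimate in Keel--Tao, again using $\sigma>1$. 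Finally, the $L_{t}^{\infty}\dot{H}_{x}^{s}$ and $L_{t}^{\infty}\dot{H}_{x}^{s-1}$ terms in $(\ref{4.4})$ follow from the energy identity for the wave equation together with Minkowski's inequality applied to the Duhamel integral.

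The main obstacle is the endpoint pair $p=2$ (reached when $q=\frac{2(d-1)}{d-3}$, e.g. $(p,q)=(2,6)$ when $d=4$): it lies outside the range of elementary $TT^{*}$ combined with the Hardy--Littlewood--Sobolev inequality and genuinely needs the Keel--Tao bilinear interpolation, so the hypothesis $d\geq 4$, ensuring $\sigma=(d-1)/2>1$, is exactly the convenient regime; away from the endpoint the estimate is the classical one and follows from $TT^{*}$ and Christ--Kiselev alone.
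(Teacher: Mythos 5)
The paper does not present a proof of this theorem at all: its ``proof'' is a one-line citation to Tao's textbook and the original references (Kato, Ginibre--Velo, Kapitanski, Lindblad--Sogge, Sogge, Shatah--Struwe, Keel--Tao). Your outline reconstructs the standard argument from those references correctly and in the expected order: Duhamel reduction, Littlewood--Paley decomposition, the frequency-localized dispersive bound interpolated against $L^{2}$ conservation, the Keel--Tao abstract theorem (applicable, including the $p=2$ endpoint, precisely because $\sigma=(d-1)/2>1$ when $d\geq 4$), square-function summation to reassemble the $\dot H^{s}$ norm, $TT^{*}$ for the untruncated retarded term, and Christ--Kiselev (or the bilinear form at the double endpoint) for the time truncation.

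One small imprecision in your last step: for the Duhamel term, ``energy identity plus Minkowski'' only yields control by $\|F\|_{L_{t}^{1}\dot H^{s-1}}$, not by $\|F\|_{L_{t}^{\tilde p'}L_{x}^{\tilde q'}}$ for a general admissible pair $(\tilde p',\tilde q')$. The $L_{t}^{\infty}\dot H_{x}^{s}$ and $L_{t}^{\infty}\dot H_{x}^{s-1}$ bounds with the stated right-hand side are the $(p,q)=(\infty,2)$ instance of the $TT^{*}$ and Christ--Kiselev estimates you have already set up, combined with unitarity of the free wave group on $\dot H^{s}\times\dot H^{s-1}$; they are not an independent input. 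This is a routing error rather than a gap, since the machinery you invoked already contains the fix.
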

\begin{proof}
This theorem was copied from \cite{tao2006nonlinear}. See \cite{kato1994lq}, \cite{ginibre1995generalized}, \cite{kapitanski1989some}, \cite{lindblad1995existence}, \cite{sogge1995lectures}, \cite{shatah1993regularity}, and \cite{keel1998endpoint} for references.
\end{proof}

To prove small data scattering, by Theorem $\ref{t4.1}$,
\begin{equation}\label{4.5}
\| |\nabla|^{s_{c} - \frac{1}{2}} u \|_{L_{t,x}^{\frac{2(d + 1)}{d - 1}}(\mathbb{R} \times \mathbb{R}^{d})} \lesssim_{d} \| u(0) \|_{\dot{H}_{x}^{s_{c}}(\mathbb{R}^{d})} + \| u_{t}(0) \|_{\dot{H}_{x}^{s_{c} - 1}(\mathbb{R}^{d})} + \| |\nabla|^{s_{c} - \frac{1}{2}} F \|_{L_{t, x}^{\frac{2(d + 1)}{d + 3}}(\mathbb{R} \times \mathbb{R}^{d})},
\end{equation}
which was proved in the original paper \cite{strichartz1977restrictions}. A straightforward application of $(\ref{4.5})$ gives global well-posedness and scattering for $(\ref{1.1})$ with small initial data, for both radially symmetric initial data and general initial data, see \cite{lindblad1995existence}.

\begin{theorem}\label{t4.2}
For any $d > 3$, there exists some $\epsilon_{0}(d) > 0$ such that if $\frac{1}{2} < s_{c} < 1$,
\begin{equation}\label{4.6}
\| u(0, \cdot) \|_{\dot{H}^{s_{c}}(\mathbb{R}^{d})} + \| u_{t}(0, \cdot) \|_{\dot{H}^{s_{c} - 1}(\mathbb{R}^{d})} \leq \epsilon_{0}(d),
\end{equation}
then $(\ref{4.1})$ is globally well-posed and the solution satisfies
\begin{equation}\label{4.7}
\| u \|_{L_{t,x}^{\frac{(d + 1)(p - 1)}{2}}(\mathbb{R} \times \mathbb{R}^{d})} \lesssim \| u(0, \cdot) \|_{\dot{H}^{s_{c}}(\mathbb{R}^{d})} + \| u_{t}(0, \cdot) \|_{\dot{H}^{s_{c} - 1}(\mathbb{R}^{d})}.
\end{equation}
\end{theorem}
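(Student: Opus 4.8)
I will prove Theorem~\ref{t4.2} by a contraction mapping argument based on $(\ref{4.5})$ together with one auxiliary Strichartz pair. Write $q_{0}:=\frac{(d+1)(p-1)}{2}$ for the scattering exponent and $\sigma:=s_{c}-\tfrac12\in(0,\tfrac12)$, and define $c$ by $\tfrac1c:=\tfrac1{q_{0}}+\tfrac{\sigma}{d}$; since $p-1>\tfrac4{d-1}$ one checks $2<c<q_{0}<\infty$, and the homogeneous Sobolev embedding $\dot{W}^{\sigma,c}(\mathbb{R}^{d})\hookrightarrow L^{q_{0}}(\mathbb{R}^{d})$ holds. Applying $|\nabla|^{\sigma}$ to $(\ref{1.1})$, the function $v:=|\nabla|^{\sigma}u$ solves the wave equation with forcing $|\nabla|^{\sigma}(|u|^{p-1}u)$ and data $|\nabla|^{\sigma}u_{0}\in\dot{H}^{1/2}$, $|\nabla|^{\sigma}u_{1}\in\dot{H}^{-1/2}$ whose norms equal $\|u_{0}\|_{\dot{H}^{s_{c}}}$, $\|u_{1}\|_{\dot{H}^{s_{c}-1}}$. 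A direct check of $(\ref{4.2})$--$(\ref{4.3})$ shows that, at regularity $s=\tfrac12$, the conformal pair $\big(\tfrac{2(d+1)}{d-1},\tfrac{2(d+1)}{d-1}\big)$, its dual, and the pair $(q_{0},c)$ are all wave admissible: for $(q_{0},c)$ the scaling relation becomes $\tfrac2{p-1}+\sigma=\tfrac{d-1}2$, which is automatic, while the admissibility inequality reduces, after substituting $s_{c}=\tfrac d2-\tfrac2{p-1}$, exactly to $\tfrac1{p-1}\le\tfrac{d-1}4$, i.e.\ $p-1\ge\tfrac4{d-1}$. I therefore take
\[
\|u\|_{X}:=\big\||\nabla|^{\sigma}u\big\|_{L_{t,x}^{\frac{2(d+1)}{d-1}}(\mathbb{R}\times\mathbb{R}^{d})}+\big\||\nabla|^{\sigma}u\big\|_{L_{t}^{q_{0}}L_{x}^{c}(\mathbb{R}\times\mathbb{R}^{d})},
\]
noting that Theorem~\ref{t4.1} applied to $v$ at regularity $\tfrac12$ simultaneously controls $\|u\|_{L_{t}^{\infty}\dot{H}^{s_{c}}}+\|u_{t}\|_{L_{t}^{\infty}\dot{H}^{s_{c}-1}}$, and that $\|u\|_{L_{t,x}^{q_{0}}}\lesssim\|u\|_{X}$ by the Sobolev embedding above (for each fixed $t$).

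The engine of the argument is the nonlinear bound
\[
\big\||\nabla|^{\sigma}(|u|^{p-1}u)\big\|_{L_{t,x}^{\frac{2(d+1)}{d+3}}}\lesssim\|u\|_{L_{t,x}^{q_{0}}}^{\,p-1}\,\big\||\nabla|^{\sigma}u\big\|_{L_{t,x}^{\frac{2(d+1)}{d-1}}}\lesssim\|u\|_{X}^{p}.
\]
This follows from the fractional chain rule in $x$, putting $|\nabla|^{\sigma}$ on one factor of $u$ (in $L_{x}^{2(d+1)/(d-1)}$) and leaving $|u|^{p-1}$ in $L_{x}^{(d+1)/2}$, followed by H\"older in $t$; the arithmetic closes because $(p-1)\cdot\tfrac{d+1}{2}=q_{0}$ and $\tfrac{p-1}{q_{0}}+\tfrac{d-1}{2(d+1)}=\tfrac{d+3}{2(d+1)}$. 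Inserting this into Theorem~\ref{t4.1} --- with the conformal pair as the dual exponent on the forcing and the free evolution bounded by homogeneous Strichartz --- gives, for the Duhamel map $\Phi(u)=\cos(t\sqrt{-\Delta})u_{0}+\tfrac{\sin(t\sqrt{-\Delta})}{\sqrt{-\Delta}}u_{1}-\int_{0}^{t}\tfrac{\sin((t-\tau)\sqrt{-\Delta})}{\sqrt{-\Delta}}|u(\tau)|^{p-1}u(\tau)\,d\tau$, the estimate $\|\Phi(u)\|_{X}\le C(\|u_{0}\|_{\dot{H}^{s_{c}}}+\|u_{1}\|_{\dot{H}^{s_{c}-1}})+C\|u\|_{X}^{p}$ together with a matching difference estimate $\|\Phi(u)-\Phi(\tilde u)\|_{X}\lesssim(\|u\|_{X}^{p-1}+\|\tilde u\|_{X}^{p-1})\|u-\tilde u\|_{X}$ (with the usual modification when $1<p<2$). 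For $\epsilon_{0}(d)$ small, $\Phi$ is a contraction on the ball $\{\|u\|_{X}\le 2C(\|u_{0}\|_{\dot{H}^{s_{c}}}+\|u_{1}\|_{\dot{H}^{s_{c}-1}})\}$, producing the unique global solution with $\|u\|_{X}\lesssim\|u_{0}\|_{\dot{H}^{s_{c}}}+\|u_{1}\|_{\dot{H}^{s_{c}-1}}$; since $\|u\|_{L_{t,x}^{(d+1)(p-1)/2}}=\|u\|_{L_{t,x}^{q_{0}}}\lesssim\|u\|_{X}$, this is $(\ref{4.7})$. Scattering follows in the standard way: the asymptotic data $u_{0}^{\pm}=u_{0}\mp\int_{0}^{\pm\infty}\tfrac{\sin(\tau\sqrt{-\Delta})}{\sqrt{-\Delta}}|u|^{p-1}u\,d\tau$ (and likewise $u_{1}^{\pm}$) lie in $\dot{H}^{s_{c}}\times\dot{H}^{s_{c}-1}$, and the Duhamel tail $\int_{t}^{\pm\infty}$, measured in $\dot{H}^{s_{c}}\times\dot{H}^{s_{c}-1}$, is bounded by $\||\nabla|^{\sigma}(|u|^{p-1}u)\|_{L_{t,x}^{2(d+1)/(d+3)}([t,\pm\infty)\times\mathbb{R}^{d})}\to0$ by the nonlinear estimate and the finiteness of $\|u\|_{X}$.

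The one step that needs genuine care rather than bookkeeping is the fractional chain rule for $|u|^{p-1}u$ at order $\sigma=s_{c}-\tfrac12$. In high dimensions with $p-1$ near the lower endpoint $\tfrac4{d-1}$ one can have $\sigma>p-1$, so the elementary $C^{1,\,p-1}$ version does not apply directly; here one should invoke a refined fractional Leibniz/chain rule valid for the model nonlinearity $|u|^{p-1}u$ in this regime (of the type standard in the study of intercritical dispersive equations in high dimensions), exploiting that $0<\sigma<1$ and that $p$ is bounded away from $1$ for fixed $d$. The accompanying difference estimate when $1<p<2$, where $z\mapsto|z|^{p-1}z$ is only H\"older-differentiable, is handled by the same refined estimate applied to the telescoping identity for $|u|^{p-1}u-|\tilde u|^{p-1}\tilde u$. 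Everything else --- the admissibility verifications, the Sobolev embedding, the H\"older exponent arithmetic, the contraction, and the absolute continuity used for scattering --- is routine.
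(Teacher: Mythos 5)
Your proof is correct and follows precisely the route the paper itself merely alludes to: the paper records only the nonlinear bound $(\ref{4.7.1})$ and cites the literature, whereas you carry out the contraction mapping. The admissibility verification for the pair $(q_{0},c)$ at regularity $\tfrac12$ and the arithmetic closing $(\ref{4.7.1})$ both check out. Two small points in your technical discussion, neither of which affects the argument. First, the regime $\sigma=s_{c}-\tfrac12>p-1$ occurs near the \emph{upper} endpoint $p-1\nearrow\tfrac4{d-2}$ in high dimensions (there $\sigma\to\tfrac12$ while $p-1$ shrinks), not near the lower endpoint $\tfrac4{d-1}$, where $\sigma\to0$ and so $\sigma<p-1$. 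Second, for the main estimate $(\ref{4.7.1})$ the Christ--Weinstein fractional chain rule holds for all $\sigma\in(0,1)$ as soon as $|F'(z)|\lesssim|z|^{p-1}$, with no constraint relating $\sigma$ to $p-1$; the genuine subtlety is confined to the difference estimate for the contraction when $1<p<2$, and the cleanest fix there is the standard one of running the iteration in the derivative-free topology $L_{t,x}^{q_{0}}$ (where $|\,|u|^{p-1}u-|\tilde u|^{p-1}\tilde u\,|\lesssim(|u|^{p-1}+|\tilde u|^{p-1})|u-\tilde u|$ closes directly) while separately propagating boundedness in $X$, rather than attempting a Lipschitz bound in $X$ itself.
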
 
\begin{remark}
This theorem is proved in many places and in far more generality, see for example \cite{tao2006nonlinear}. Observe that, by the fractional product rule and chain rule (see \cite{taylor2000tools}),
\begin{equation}\label{4.7.1}
\| |\nabla|^{s_{c} - \frac{1}{2}} |u|^{p - 1} u \|_{L_{t,x}^{\frac{2(d + 1)}{d + 3}}} \lesssim \| u \|_{L_{t,x}^{\frac{(d + 1)(p - 1)}{2}}}^{p - 1} \| |\nabla|^{s_{c} - \frac{1}{2}} u \|_{L_{t,x}^{\frac{2(d + 1)}{d - 1}}}.
\end{equation}
\end{remark}

While global well-posedness and scattering hold for small nonradial data, the proof in this paper of global well-posedness and scattering for $(\ref{1.1})$ with large initial data relies heavily on radial symmetry. In particular, the proof relies heavily on the radial Strichartz estimate of \cite{sterbenz2005angular}. See also \cite{rodnianskisterbenz}.
\begin{theorem}[Strichartz estimates for radially symmetric initial data]\label{t4.3}
Let $u$ be a radially symmetric function on $\mathbb{R}^{d + 1}$ such that $u_{tt} - \Delta u = 0$. Then, the following estimates hold,
\begin{equation}\label{4.13}
\| u \|_{L_{t}^{p} L_{x}^{q}(\mathbb{R} \times \mathbb{R}^{d})} \lesssim \| u(0) \|_{\dot{H}^{\gamma}} + \| u_{t}(0) \|_{\dot{H}^{\gamma - 1}},
\end{equation}
where 
\begin{equation}\label{4.14}
\frac{1}{p} + \frac{d - 1}{q} < \frac{d - 1}{2}, \qquad \text{and} \qquad \frac{1}{p} + \frac{d}{q} = \frac{d}{2} - \gamma.
\end{equation}
\end{theorem}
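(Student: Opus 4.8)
The plan is to reduce the estimate, via Littlewood--Paley decomposition and scaling, to a single fixed frequency, and then to prove the fixed-frequency bound for radial data by a $TT^{*}$ argument in which the radial symmetry enters through the asymptotics of Bessel functions.

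\textbf{Reduction to a single frequency.} Assume first $2 \le p \le \infty$ and $2 \le q < \infty$ (the case $q = \infty$ is covered by the same representation below together with the radial Sobolev embedding). The Littlewood--Paley square function estimate and Minkowski's inequality reduce matters to the frequency-localized bound $\| P_{N} u \|_{L_{t}^{p} L_{x}^{q}} \lesssim \| P_{N} u(0) \|_{\dot{H}^{\gamma}} + \| P_{N} u_{t}(0) \|_{\dot{H}^{\gamma - 1}}$, which is then summed in $\ell^{2}$ over dyadic $N$ using $\dot{H}^{\gamma} = \dot{B}^{\gamma}_{2,2}$. Rescaling $(t, x) \mapsto (Nt, Nx)$ turns a frequency-$\sim N$ piece into a frequency-$\sim 1$ piece, and both sides transform by the same power of $N$ precisely because of the scaling relation $\frac{1}{p} + \frac{d}{q} = \frac{d}{2} - \gamma$. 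Hence it suffices to prove $\| P_{1} e^{\pm it\sqrt{-\Delta}} f \|_{L_{t}^{p} L_{x}^{q}(\mathbb{R} \times \mathbb{R}^{d})} \lesssim \| f \|_{L^{2}(\mathbb{R}^{d})}$ for radial $f$, uniformly over the stated range (the $\cos$ and $\frac{\sin}{\sqrt{-\Delta}}$ propagators reduce to $e^{\pm it\sqrt{-\Delta}}$ since $\sqrt{-\Delta} \sim 1$ on this piece).

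\textbf{The radial $TT^{*}$ kernel.} By $TT^{*}$, the fixed-frequency estimate for radial data is equivalent to $\| \int_{\mathbb{R}} e^{i(t-s)\sqrt{-\Delta}} \chi^{2}(\sqrt{-\Delta}) F(s)\, ds \|_{L_{t}^{p} L_{x}^{q}} \lesssim \| F \|_{L_{t}^{p'} L_{x}^{q'}}$ for radial $F$, where $\chi$ is a bump adapted to $|\xi| \sim 1$. Writing the spatial Fourier transform of a radial function as a Hankel transform, the kernel of $e^{i\tau\sqrt{-\Delta}} \chi^{2}(\sqrt{-\Delta})$ on radial functions is
\begin{equation*}
K_{\tau}(r, \rho) = c_{d} \int_{0}^{\infty} e^{i\tau\lambda}\, \chi^{2}(\lambda)\, \frac{J_{(d-2)/2}(\lambda r)}{(\lambda r)^{(d-2)/2}}\, \frac{J_{(d-2)/2}(\lambda\rho)}{(\lambda\rho)^{(d-2)/2}}\, \lambda^{d-1}\, d\lambda .
\end{equation*}
Inserting the asymptotics $\frac{J_{\nu}(z)}{z^{\nu}} = z^{-(d-1)/2}\bigl(a(z) e^{iz} + \overline{a(z)} e^{-iz}\bigr)$ for $z \gtrsim 1$ with $a$ a symbol of order $0$, and $\frac{J_{\nu}(z)}{z^{\nu}}$ smooth and bounded for $z \lesssim 1$, one finds that on $r, \rho \gtrsim 1$ all powers of $\lambda$ cancel and $|K_{\tau}(r, \rho)| \lesssim_{M} (r\rho)^{-(d-1)/2} \sum \langle \tau \pm r \pm \rho \rangle^{-M}$ for every $M$, while on $r \lesssim 1$ or $\rho \lesssim 1$ one Bessel factor is non-oscillatory and a simpler, effectively lower-dimensional bound holds. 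Equivalently: a frequency-$1$ radial free wave is, for $r \gtrsim 1$, pointwise dominated by $r^{-(d-1)/2} \sum_{\pm} |G_{\pm}(t \pm r)|$ with $G_{\pm}$ one-dimensionally frequency-localized and $\| G_{\pm} \|_{L^{2}(\mathbb{R})} \lesssim \| f \|_{L^{2}(\mathbb{R}^{d})}$; this is the one-dimensional behavior, with the $(d-1)$-fold dispersive gain carried entirely by the weight $r^{-(d-1)/2}$.

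\textbf{Closing the estimate, and the main obstacle.} With this kernel, conjugating away the weight $r^{(d-1)/2}$ turns the $TT^{*}$ bound into a mixed-norm convolution (Young / Hardy--Littlewood--Sobolev) inequality on $\mathbb{R}_{t} \times (\mathbb{R}_{+}, dr)$: the radial measure $r^{d-1}dr$ is effectively replaced by Lebesgue measure $dr$, which is exactly what replaces the usual wave-admissibility condition $\frac{1}{p} + \frac{d-1}{2q} \le \frac{d-1}{4}$ by the weaker $\frac{1}{p} + \frac{d-1}{q} < \frac{d-1}{2}$. For $p$ bounded away from $2$ this can be carried out directly from the pointwise representation, using Young's inequality in $t$ and one-dimensional Bernstein for the profiles $G_{\pm}$, summed over dyadic radial shells; the role of the strict inequality is precisely that it keeps the power weights $r^{d-1-q(d-1)/2}$ and the associated convolution exponents integrable, i.e.\ it keeps us off the scale-invariant endpoint. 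I expect the main obstacle to be the endpoint $p = 2$ (equivalently, exponents approaching the boundary line $\frac{1}{p} + \frac{d-1}{q} = \frac{d-1}{2}$): there the crude pointwise-in-$x$ domination is too lossy and one must instead exploit $L^{2}$-orthogonality directly, which requires a careful analysis of $K_{\tau}(r, \rho)$ across the transition regions $r \sim \rho$ and $r, \rho \sim 1$ (the crossover between the oscillatory and non-oscillatory regimes of the Bessel functions) together with a loss-free mixed-norm estimate, typically via a Schur test or a bilinear refinement. A secondary point is that the frequency summation when $p$ or $q$ equals $\infty$ lies outside the square-function argument and is handled separately through the radial (Strauss) Sobolev embedding.
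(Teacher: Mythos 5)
The paper itself offers no proof of Theorem~\ref{t4.3}: it is stated as a citation to \cite{sterbenz2005angular} (see also \cite{rodnianskisterbenz}), and the text preceding and following it simply records the result for use in the later sections. So there is no ``paper proof'' to compare against; what you have written is a from-scratch reconstruction.

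Your strategy --- Littlewood--Paley reduction to a single frequency by scaling, $TT^{*}$, Hankel transform, and the Bessel asymptotics $J_{\nu}(z)/z^{\nu} \sim z^{-(d-1)/2}\bigl(a(z)e^{iz}+\overline{a(z)}e^{-iz}\bigr)$ to write a frequency-$1$ radial wave as $r^{-(d-1)/2}\bigl(G_{+}(t+r)+G_{-}(t-r)\bigr)$ in the far field --- is exactly the mechanism behind the radial improvement in the literature (Klainerman--Machedon, Sterbenz), and the cancellation of powers of $\lambda$ you point out is correct. Conjugating away $r^{(d-1)/2}$ so that the radial measure $r^{d-1}\,dr$ effectively becomes $dr$ is indeed what replaces the usual wave admissibility line $\frac{1}{p}+\frac{d-1}{2q}\le\frac{d-1}{4}$ by $\frac{1}{p}+\frac{d-1}{q}<\frac{d-1}{2}$. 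The dyadic-shell summation you sketch does close: for a shell $r\sim R\gtrsim 1$, writing $\|u\|_{L_{t}^{p}L_{x}^{q}(r\sim R)}$ in terms of $G_{\pm}$, using Young in $t$ together with one-dimensional Bernstein when $p\ge q$, and Minkowski $L_{t}^{p}L_{r}^{q}\hookrightarrow L_{r}^{q}L_{t}^{p}$ plus Bernstein when $q>p$, gives a geometrically summable power of $R$ under the strict inequality $\frac{1}{p}+\frac{d-1}{q}<\frac{d-1}{2}$ (one checks that when $q>p$ the weaker condition $\frac{d}{q}<\frac{d-1}{2}$ produced by the Minkowski route is implied by the hypothesis). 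This is the substance of the theorem.

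The gaps you have flagged are genuine but manageable, and I would rebalance them slightly. The region $r\lesssim 1$ (and $\rho\lesssim 1$ in $TT^{*}$) is not merely a triviality to defer: it is where the Bessel factor is non-oscillatory and the $r^{-(d-1)/2}$ radiation-field picture fails, and the bound there relies instead on the rapid $\langle\tau\rangle^{-M}$ decay of the frequency-$1$ kernel in time plus a compact Sobolev embedding; this should be spelled out, since otherwise the reduction is incomplete. Conversely, the case $p=2$ is less of an obstacle than you suggest: it is allowed only with strict inequality $q>\frac{2(d-1)}{d-2}$, so the shell exponent is strictly negative, and the dyadic summation absorbs the Young-at-$L^{1}$ loss; no bilinear refinement or Schur argument is actually needed at an interior point of the open region \eqref{4.14}, only if one were trying to reach the endpoint line itself, which the theorem does not claim. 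Finally, the statement implicitly assumes $p,q\ge 2$ (needed both for the square function reduction and for Bernstein), which should be recorded. With these points filled in, your proposal is a correct and self-contained proof of a result that the paper simply imports.
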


The Christ--Kiselev lemma implies that Theorem $\ref{t4.3}$ and Lemma $\ref{l4.4}$ also hold for a small data solution to $(\ref{1.1})$.

\begin{lemma}[Christ--Kiselev lemma]\label{l4.5}
Let $X, Y$ be Banach spaces, let $I$ be a time interval, and let $K \in C^{0}(I \times I \rightarrow B(X \rightarrow Y))$ be a kernel taking values in the space of bounded operators from $X$ to $Y$. If $1 \leq p < q \leq \infty$ is such that
\begin{equation}\label{4.20}
\| \int_{I} K(t, s) f(s) ds \|_{L_{t}^{q}(I \rightarrow Y)} \leq A \| f \|_{L_{t}^{p}(I \rightarrow X)},
\end{equation}
for all $f \in L_{t}^{p}(I \rightarrow X)$ and some $A > 0$, then we also have
\begin{equation}\label{4.21}
\| \int_{s \in I : s < t} K(t, s) f(s) ds \|_{L_{t}^{q}(I \rightarrow Y)} \lesssim_{p, q} A \| f \|_{L_{t}^{p}(I \rightarrow X)}.
\end{equation}
\end{lemma}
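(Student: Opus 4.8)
The plan is to establish the bound on the retarded operator $\tilde T f(t) := \int_{s \in I,\, s < t} K(t,s) f(s)\, ds$ by a dyadic (Whitney) decomposition of the triangle $\{(t,s) : s < t\}$ adapted to the distribution of the ``mass'' $\|f(s)\|_X^p\, ds$. By density and the fact that both sides of (\ref{4.21}) depend continuously on $f$, I would first reduce to $f \in C_c(I \to X)$ supported in the interior of $I$ with $f \not\equiv 0$, and then normalize so that $\|f\|_{L_t^p(I \to X)} = 1$. Define $F : I \to [0,1]$ by $F(t) = \int_{s \in I,\, s \le t} \|f(s)\|_X^p\, ds$. Because $d\mu := \|f(s)\|_X^p\, ds$ is absolutely continuous with $\mu(I) = 1$, the function $F$ is continuous, nondecreasing and onto $[0,1]$. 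For a dyadic subinterval $J \subset [0,1]$ put $E_J := \{t \in I : F(t) \in J\}$, which is a subinterval of $I$ satisfying $\int_{E_J} \|f(s)\|_X^p\, ds = |J|$.

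The combinatorial core is the identity
\[
\{(t,s) \in I \times I : s < t\} = \bigsqcup_{n \ge 1} \bigsqcup_{J \in \mathcal D_n} E_J \times E_{\sigma(J)},
\]
valid up to a $\mu \otimes \mu$-null set, where $\mathcal D_n$ is the set of dyadic intervals of length $2^{-n}$ in $[0,1]$ which form the right half of their parent, and $\sigma(J)$ is the left half of that parent. I would prove it by the standard peeling recursion: taking $J$ to be the right half of $[0,1]$, the rectangle $E_J \times E_{\sigma(J)}$ is contained in $\{s < t\}$ and exhausts everything in the triangle except two similar triangles lying over the two halves of $[0,1]$ (pulled back by $F$); iterating pushes the residual triangles into the diagonal, which is $\mu \otimes \mu$-null. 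Monotonicity of $F$ guarantees that $s \in E_{\sigma(J)}$, $t \in E_J$ implies $F(s) \le F(t)$, with strict inequality off a null set, so each rectangle indeed sits in the triangle.

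Granting the decomposition, write $\tilde T f = \sum_{n \ge 1} T_n f$ with $T_n f(t) = \sum_{J \in \mathcal D_n} \mathbf 1_{E_J}(t)\, \big(T(\mathbf 1_{E_{\sigma(J)}} f)\big)(t)$, where $Tf(t) = \int_I K(t,s) f(s)\, ds$ is the full operator, bounded by $A$ from $L_t^p(I \to X)$ to $L_t^q(I \to Y)$ by (\ref{4.20}). For fixed $n$ the sets $\{E_J : J \in \mathcal D_n\}$ are pairwise disjoint, so
\[
\|T_n f\|_{L_t^q(Y)}^q \le \sum_{J \in \mathcal D_n} \|T(\mathbf 1_{E_{\sigma(J)}} f)\|_{L_t^q(Y)}^q \le A^q \sum_{J \in \mathcal D_n} \|\mathbf 1_{E_{\sigma(J)}} f\|_{L_t^p(X)}^q .
\]
There are $2^{n-1}$ intervals in $\mathcal D_n$ and $\|\mathbf 1_{E_{\sigma(J)}} f\|_{L_t^p(X)}^p = |\sigma(J)| = 2^{-n}$, whence $\|T_n f\|_{L_t^q(Y)} \le A\, 2^{-1/q}\, 2^{-n(\frac1p - \frac1q)}$, with the obvious variant when $q = \infty$. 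Since $p < q$ the series $\sum_{n \ge 1} 2^{-n(\frac1p - \frac1q)}$ converges, and summing in $n$ gives $\|\tilde T f\|_{L_t^q(Y)} \le C_{p,q}\, A\, \|f\|_{L_t^p(X)}$, which is (\ref{4.21}).

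I expect the only real obstacle to be the measure-theoretic bookkeeping: making $F$ continuous and surjective (handling the level sets of positive length, where $f$ vanishes, and the behaviour at $\partial I$), verifying that the rectangles $E_J \times E_{\sigma(J)}$ tile the triangle modulo $\mu \otimes \mu$-null sets so that $\sum_n T_n f$ genuinely reproduces $\tilde T f$ when paired against $f$, and justifying the interchange of sum and integral on the dense class. The quantitative step is the short computation above, and since $C_{p,q} \to \infty$ as $p \uparrow q$ the hypothesis $p < q$ is used in an essential way.
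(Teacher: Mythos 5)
The paper does not supply a proof of Lemma \ref{l4.5}; it simply quotes the statement from \cite{tao2006nonlinear} and cites \cite{christ2001maximal}, \cite{smith2000global}, \cite{tao2000spherically}. Your argument is precisely the standard Whitney (dyadic) decomposition proof that appears in those references, and it is correct: renormalizing by the cumulative mass $F(t) = \int_{s \le t} \|f(s)\|_X^p\,ds$, tiling the triangle $\{s < t\}$ by the rectangles $E_J \times E_{\sigma(J)}$ pulled back from the dyadic Whitney decomposition of $\{y < x\} \subset [0,1]^2$, applying the global bound (\ref{4.20}) on each rectangle, and then exploiting $\ell^p \hookrightarrow \ell^q$ together with the gain $2^{-n(1/p-1/q)}$ to sum the scales. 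The step $\|T_n f\|_{L^q}^q \le \sum_J \|T(\mathbf 1_{E_{\sigma(J)}} f)\|_{L^q}^q$ correctly uses disjointness of the $E_J$ at fixed scale $n$, the count $|\mathcal D_n| = 2^{n-1}$ and the exact mass $\|\mathbf 1_{E_{\sigma(J)}} f\|_{L^p}^p = 2^{-n}$ give the stated geometric decay, and the hypothesis $p < q$ is exactly what makes $\sum_n 2^{-n(1/p - 1/q)}$ converge, with constant blowing up as $p \uparrow q$, as you note. The measure-theoretic caveats you flag — level sets of $F$ of positive length are $\mu$-null, boundary points of the $E_J$ are $\mu$-null, and the residual set $\{s < t,\ F(s) = F(t)\}$ contributes nothing to $\tilde T f$ because $f$ vanishes $ds$-a.e. there — are genuine but routine, and they are handled the same way in the cited sources. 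The only wording I would tighten is ``reproduces $\tilde T f$ when paired against $f$'': what you actually need is that $\sum_n T_n f = \tilde T f$ as elements of $L^q_t(I \to Y)$ (for instance, test against $L^{q'}$ duals, or check $ds$-a.e.\ pointwise for $f$ in the dense class and use the quantitative bound to pass to the limit); the estimate you prove already gives $L^q$-absolute convergence of the series, so this identification is straightforward. No gap.
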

\begin{proof}
This lemma was copied out of \cite{tao2006nonlinear}. This lemma was proved in \cite{christ2001maximal}. See also \cite{smith2000global} or \cite{tao2000spherically}.

\end{proof}

Now by Corollary $3.3$ from \cite{dodson2018global}, if $w$ is a solution to the linear wave equation $w_{tt} - \Delta w = 0$, $w(0, x) = P_{j} w_{0}$, $w_{t}(0, x) = P_{j} w_{1}$, for any $R > 0$,
\begin{equation}\label{4.15}
\| |x|^{\frac{d - 2}{2}} w \|_{L_{t}^{2} L_{x}^{\infty}(R \leq |x| \leq 2R)} \lesssim \| P_{j} w_{0} \|_{\dot{H}^{1/2}} + \| P_{j} w_{1} \|_{\dot{H}^{-1/2}}.
\end{equation}
We can use $(\ref{4.15})$ to obtain an estimate for solutions to the linear wave equation.
\begin{lemma}\label{l4.4}
If $w$ solves the linear wave equation $w_{tt} - \Delta w = 0$, then for any $\frac{1}{2} < s_{c} < 1$ and $d > 3$,
\begin{equation}\label{4.17}
\| |x|^{\frac{2}{p - 1} - \frac{1}{2}} w \|_{L_{t}^{2} L_{x}^{\infty}(\mathbb{R} \times \mathbb{R}^{d})} \lesssim_{d, s_{c}} \| u(0) \|_{\dot{H}^{s_{c}}} + \| u_{t}(0) \|_{\dot{H}^{s_{c} - 1}},
\end{equation}
where
\begin{equation}\label{4.16}
s_{c} = \frac{d}{2} - \frac{2}{p - 1}.
\end{equation}
\end{lemma}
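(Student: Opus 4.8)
\section*{Proof proposal for Lemma~\ref{l4.4}}

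The plan is to decompose $w$ into Littlewood--Paley pieces, estimate each piece on each dyadic spatial annulus separately --- using $(\ref{4.15})$ when the product of frequency and radius is large and the radial Strichartz estimate of Theorem~\ref{t4.3} when it is small --- and then sum. Write $\alpha = \frac{2}{p-1} - \frac12 = \frac{d-1}{2} - s_c$; since $\frac12 < s_c < 1$ and $d > 3$ we have $\frac12 - s_c < 0 < \alpha$. First I would set $w = \sum_{j \in \mathbb{Z}} w_j$, where $w_j$ solves the linear wave equation with data $P_j w_0, P_j w_1$, put $c_j = \| P_j w_0 \|_{\dot H^{s_c}} + \| P_j w_1 \|_{\dot H^{s_c - 1}}$ (so that $\sum_j c_j^2 \lesssim (\| w_0 \|_{\dot H^{s_c}} + \| w_1 \|_{\dot H^{s_c - 1}})^2$ by almost orthogonality), and write $A_k = \{ x : 2^k \le |x| \le 2^{k+1} \}$ for the dyadic annuli.

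The core of the argument is two estimates for a single Littlewood--Paley piece on a single annulus. For the first, I would apply $(\ref{4.15})$ with $R = 2^k$, use Bernstein's inequality to pass from $\dot H^{1/2} \times \dot H^{-1/2}$ to $\dot H^{s_c} \times \dot H^{s_c - 1}$ (picking up a factor $2^{j(\frac12 - s_c)}$), and note that $|x|^{\alpha} \lesssim 2^{k(\frac12 - s_c)} |x|^{\frac{d-2}{2}}$ on $A_k$, which gives
\begin{equation*}
\| |x|^{\alpha} w_j \|_{L_t^2 L_x^\infty(\mathbb{R} \times A_k)} \lesssim 2^{k(\frac12 - s_c)} \big( \| P_j w_0 \|_{\dot H^{1/2}} + \| P_j w_1 \|_{\dot H^{-1/2}} \big) \lesssim 2^{(j+k)(\frac12 - s_c)} c_j .
\end{equation*}
For the second, since $\alpha > 0$ we have $|x|^\alpha \lesssim 2^{k\alpha}$ on $A_k$, so by Theorem~\ref{t4.3} with $(p,q) = (2, \infty)$ and $\gamma = \frac{d-1}{2}$ (admissible since $d > 2$), followed again by Bernstein,
\begin{equation*}
\| |x|^{\alpha} w_j \|_{L_t^2 L_x^\infty(\mathbb{R} \times A_k)} \lesssim 2^{k\alpha} \| w_j \|_{L_t^2 L_x^\infty(\mathbb{R} \times \mathbb{R}^d)} \lesssim 2^{k\alpha} \big( \| P_j w_0 \|_{\dot H^{(d-1)/2}} + \| P_j w_1 \|_{\dot H^{(d-3)/2}} \big) \lesssim 2^{(j+k)\alpha} c_j .
\end{equation*}
Taking the better of the two and setting $\beta = \min(s_c - \tfrac12, \alpha) > 0$, this yields $\| |x|^{\alpha} w_j \|_{L_t^2 L_x^\infty(\mathbb{R} \times A_k)} \lesssim 2^{-\beta|j+k|} c_j$.

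Finally I would sum. Since $\| |x|^\alpha w(t) \|_{L_x^\infty}^2 = \sup_k \| |x|^\alpha w(t) \|_{L^\infty(A_k)}^2 \le \sum_k \| |x|^\alpha w(t) \|_{L^\infty(A_k)}^2$, integrating in $t$ and using the triangle inequality in $j$,
\begin{equation*}
\| |x|^\alpha w \|_{L_t^2 L_x^\infty}^2 \le \sum_k \| |x|^\alpha w \|_{L_t^2 L_x^\infty(\mathbb{R} \times A_k)}^2 \le \sum_k \Big( \sum_j \| |x|^\alpha w_j \|_{L_t^2 L_x^\infty(\mathbb{R} \times A_k)} \Big)^2 \lesssim \sum_k \Big( \sum_j 2^{-\beta|j+k|} c_j \Big)^2 .
\end{equation*}
The inner sum is a convolution of $(c_j)$ with the $\ell^1$ sequence $(2^{-\beta|m|})_m$, so Young's inequality bounds the right-hand side by $\lesssim \sum_j c_j^2 \lesssim (\| w_0 \|_{\dot H^{s_c}} + \| w_1 \|_{\dot H^{s_c - 1}})^2$, which is $(\ref{4.17})$ (proved first for Schwartz data, then extended by density). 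The hard part, in my view, is not the estimates for individual pieces but recognizing that $(\ref{4.15})$ by itself does not sum --- it degrades precisely when the radius is small compared to the inverse frequency --- so one must supplement it with the small-scale radial Strichartz bound to obtain decay in $|j+k|$ on \emph{both} sides of $j + k = 0$; once that is in hand, the double sum over frequencies and annuli closes by a routine Schur/Young argument.
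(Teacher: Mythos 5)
Your proof is correct and follows essentially the same route as the paper: decompose into Littlewood--Paley pieces, combine the annular estimate $(\ref{4.15})$ (after Bernstein) with the radial $L_t^2 L_x^\infty$ Strichartz bound from Theorem~\ref{t4.3} to obtain geometric decay $2^{-\sigma|j+k|}c_j$ in both directions of $j+k$, and close the double sum with Young's inequality. The paper organizes the two regimes as $|x|\le 2^{-j}$ versus $|x|\ge 2^{-j}$ rather than annulus-by-annulus, but this is only a presentational difference.
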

\begin{proof}
By the radially symmetric Strichartz estimates,
\begin{equation}\label{4.18}
\| P_{j} w \|_{L_{t}^{2} L_{x}^{\infty}} \lesssim 2^{j(\frac{d - 3}{2})} (\| P_{j} u_{0} \|_{\dot{H}^{1}} + \| P_{j} u_{1} \|_{L^{2}}) \lesssim 2^{j(\frac{2}{p - 1} - \frac{1}{2})} (\| P_{j} u_{0} \|_{\dot{H}^{s_{c}}} + \| P_{j} u_{1} \|_{\dot{H}^{s_{c} - 1}}).
\end{equation}
Therefore,
\begin{equation}\label{4.19}
\| |x|^{\frac{2}{p - 1} - \frac{1}{2}} P_{j} w \|_{L_{t}^{2} L_{x}^{\infty}(|x| \leq 2^{-j})} \lesssim 2^{j(\frac{2}{p - 1} - \frac{1}{2})} (\| P_{j} u_{0} \|_{\dot{H}^{s_{c}}} + \| P_{j} u_{1} \|_{\dot{H}^{s_{c} - 1}}).
\end{equation}
Meanwhile, by Bernstein's inequality and $(\ref{4.17})$,
\begin{equation}\label{4.19.1}
\| |x|^{\frac{d - 2}{2}} P_{j} w \|_{L_{t}^{2} L_{x}^{\infty}(R \leq |x| \leq 2R)} \lesssim 2^{j(\frac{1}{2} - s_{c})} (\| P_{j} u_{0} \|_{\dot{H}^{s_{c}}} + \| P_{j} u_{1} \|_{\dot{H}^{s_{c} - 1}}).
\end{equation}
Summing up $(\ref{4.19.1})$ in $R$,
\begin{equation}\label{4.19.2}
\| |x|^{\frac{2}{p - 1} - \frac{1}{2}} P_{j} w \|_{L_{t}^{2} L_{x}^{\infty}(R \leq |x|)} \lesssim R^{-(s_{c} - \frac{1}{2})} 2^{-j(s_{c} - \frac{1}{2})} (\| P_{j} u_{0} \|_{\dot{H}^{s_{c}}} + \| P_{j} u_{1} \|_{\dot{H}^{s_{c} - 1}}).
\end{equation}
Taking $R = 2^{-j}$, $(\ref{4.19})$ and $(\ref{4.19.2})$ imply that for any $j$,
\begin{equation}\label{4.19.3}
\| |x|^{\frac{2}{p - 1} - \frac{1}{2}} P_{j} w \|_{L_{t}^{2} L_{x}^{\infty}} \lesssim (\| P_{j} u_{0} \|_{\dot{H}^{s_{c}}} + \| P_{j} u_{1} \|_{\dot{H}^{s_{c} - 1}}).
\end{equation}
Furthermore, observe that $(\ref{4.19.3})$ is maximized at $|x| \sim 2^{-j}$. By $(\ref{4.18})$ and $(\ref{4.19.2})$, there exists some $\sigma(p, d) > 0$ so that
\begin{equation}\label{4.19.4}
\| |x|^{\frac{2}{p - 1} - \frac{1}{2}} P_{j} w \|_{L_{t}^{2} L_{x}^{\infty}(|x| \sim 2^{k})} \lesssim 2^{-\sigma |j + k|} (\| P_{j} u_{0} \|_{\dot{H}^{s_{c}}} + \| P_{j} u_{1} \|_{\dot{H}^{s_{c} - 1}}).
\end{equation}
Summing up $(\ref{4.19.4})$ using Young's inequality,
\begin{equation}\label{4.19.5}
\| |x|^{\frac{2}{p - 1} - \frac{1}{2}} w \|_{L_{t}^{2} L_{x}^{\infty}} \lesssim \| u_{0} \|_{\dot{H}^{s_{c}}} + \| u_{1} \|_{\dot{H}^{s_{c} - 1}}.
\end{equation}
\end{proof}

Next, using the local energy estimate and Morawetz estimate in the linear case (see Propositions $\ref{p5.3}$ and $\ref{p5.4}$ for the nonlinear version),
\begin{proposition}\label{p4.6}
If $w$ solves $w_{tt} - \Delta w = 0$, for any $R > 0$,
\begin{equation}\label{4.19.6}
R^{-1} \int \int_{|x| \leq R} |w|^{2} dx dt \lesssim \| w_{0} \|_{L^{2}}^{2} + \| w_{1} \|_{\dot{H}^{-1}}^{2}.
\end{equation}
\end{proposition}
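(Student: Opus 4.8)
The plan is to diagonalize the free evolution into the two half-wave propagators and reduce everything to a single local smoothing estimate. Using the spectral representation $\widehat{w}(t,\xi)=\cos(t|\xi|)\widehat{w}_0(\xi)+|\xi|^{-1}\sin(t|\xi|)\widehat{w}_1(\xi)$ together with $\cos\theta=\tfrac12(e^{i\theta}+e^{-i\theta})$ and $\sin\theta=\tfrac1{2i}(e^{i\theta}-e^{-i\theta})$, one writes $w=e^{it\sqrt{-\Delta}}g_++e^{-it\sqrt{-\Delta}}g_-$, where $\widehat{g}_\pm(\xi)=\tfrac12\widehat{w}_0(\xi)\mp\tfrac1{2i|\xi|}\widehat{w}_1(\xi)$, so that $\|g_\pm\|_{L^2}^2\lesssim\|w_0\|_{L^2}^2+\|w_1\|_{\dot H^{-1}}^2$. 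Since $e^{-it\sqrt{-\Delta}}g=\overline{e^{it\sqrt{-\Delta}}\overline g}$, it suffices to prove that for every $g\in L^2(\mathbb{R}^d)$ and every $R>0$,
\begin{equation*}
\int_{\mathbb{R}}\int_{|x|\le R}\bigl|e^{it\sqrt{-\Delta}}g(x)\bigr|^2\,dx\,dt\ \lesssim_d\ R\,\|g\|_{L^2(\mathbb{R}^d)}^2,
\end{equation*}
and then sum the two contributions.

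To prove this I would use Plancherel in $t$ after passing to polar coordinates in frequency. By density, assume $g$ is Schwartz. Writing $\xi=\rho\omega$ with $\rho>0$, $\omega\in S^{d-1}$,
\begin{equation*}
e^{it\sqrt{-\Delta}}g(x)=\int_0^\infty e^{it\rho}\,\rho^{d-1}\,\Phi_\rho(\rho x)\,d\rho,\qquad \Phi_\rho(y):=\int_{S^{d-1}}e^{iy\cdot\omega}\,\widehat{g}(\rho\omega)\,d\omega,
\end{equation*}
so $\Phi_\rho$ is (a dilate of) the Fourier extension of the restriction of $\widehat g$ to the sphere of radius $\rho$. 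Viewing the $\rho$-integral as an inverse Fourier transform in time, Plancherel gives $\int_{\mathbb{R}}|e^{it\sqrt{-\Delta}}g(x)|^2\,dt\sim\int_0^\infty\rho^{2(d-1)}|\Phi_\rho(\rho x)|^2\,d\rho$, and integrating over $|x|\le R$ and substituting $y=\rho x$ yields
\begin{equation*}
\int_{\mathbb{R}}\int_{|x|\le R}\bigl|e^{it\sqrt{-\Delta}}g(x)\bigr|^2\,dx\,dt\ \sim\ \int_0^\infty\rho^{d-2}\Bigl(\int_{|y|\le\rho R}|\Phi_\rho(y)|^2\,dy\Bigr)\,d\rho.
\end{equation*}

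The key input — and the only genuinely nontrivial step — is the local $L^2$ Fourier extension estimate for the sphere: for $G\in L^2(S^{d-1})$ and any $\sigma>0$,
\begin{equation*}
\int_{|y|\le\sigma}\Bigl|\int_{S^{d-1}}e^{iy\cdot\omega}G(\omega)\,d\omega\Bigr|^2\,dy\ \lesssim_d\ \sigma\,\|G\|_{L^2(S^{d-1})}^2.
\end{equation*}
I would prove this by inserting a Schwartz cutoff $\phi\ge\mathbf{1}_{B_1}$, bounding the left side by $\sigma^d\iint G(\omega)\overline{G(\omega')}\,\widehat{\phi}(\sigma(\omega-\omega'))\,d\omega\,d\omega'$, and using $|\widehat{\phi}(\sigma(\omega-\omega'))|\lesssim_N(1+\sigma|\omega-\omega'|)^{-N}$ with Schur's test, since $\sup_\omega\int_{S^{d-1}}(1+\sigma|\omega-\omega'|)^{-N}\,d\omega'\lesssim_d\sigma^{-(d-1)}$ for $\sigma\gtrsim1$ (and for $\sigma\lesssim1$ one simply uses $\|G\|_{L^1(S^{d-1})}^2\lesssim\|G\|_{L^2}^2$). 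Applying this with $\sigma=\rho R$ and $G=\widehat g(\rho\,\cdot)|_{S^{d-1}}$ turns the previous display into
\begin{equation*}
\lesssim\ R\int_0^\infty\rho^{d-1}\int_{S^{d-1}}|\widehat g(\rho\omega)|^2\,d\omega\,d\rho\ =\ R\,\|\widehat g\|_{L^2(\mathbb{R}^d)}^2\ \sim\ R\,\|g\|_{L^2}^2,
\end{equation*}
as desired.

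Combining the two half-wave pieces and recalling $\|g_\pm\|_{L^2}^2\lesssim\|w_0\|_{L^2}^2+\|w_1\|_{\dot H^{-1}}^2$ then gives $(\ref{4.19.6})$. I expect the main obstacle to be the local extension estimate in the third paragraph, which is where the dispersion of the wave equation actually enters; the rest is bookkeeping with Plancherel and a dilation, and the argument is automatically scale-invariant, consistent with the homogeneity of $(\ref{4.19.6})$.
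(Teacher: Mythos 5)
Your proof is correct, and it takes a genuinely different route from the paper's. The paper argues via Littlewood--Paley decomposition: it splits $w$ into a low-frequency piece (handled via the radial $L_t^2 L_x^\infty$ Strichartz estimate of Theorem \ref{t4.3} plus Bernstein and H\"older) and high-frequency pieces (handled via a spatial cutoff $\chi(x/R)$, energy estimates, and a frequency-support orthogonality argument), and then reassembles via Plancherel. By contrast, you split the evolution into the two half-wave propagators $e^{\pm it\sqrt{-\Delta}}$, pass to polar frequency coordinates, apply Plancherel in time to convert the temporal $L^2$ norm into an integral over spheres, rescale, and reduce everything to a single local $L^2$ extension estimate for the sphere (the Agmon--H\"ormander estimate), which you prove cleanly by inserting a Schwartz majorant and running Schur's test on $S^{d-1}$. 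Two comments on the comparison. First, your argument does not use radial symmetry at all, so it proves the statement for arbitrary $L^2 \times \dot H^{-1}$ data, whereas the paper's route invokes the radial Strichartz estimate and hence tacitly assumes radiality (which is harmless in the paper's context but is a restriction nonetheless). Second, the paper's proof is designed to reuse machinery already on the table (Littlewood--Paley, radial Strichartz, Bernstein), so it is shorter given that infrastructure, while yours is more self-contained and identifies exactly where the dispersion enters, namely in the surface-measure decay $\sup_\omega \int_{S^{d-1}}(1+\sigma|\omega-\omega'|)^{-N}\,d\omega' \lesssim \sigma^{-(d-1)}$. One tiny cosmetic remark: the signs in your $\widehat{g}_\pm$ should be $\widehat{g}_\pm(\xi) = \tfrac12\widehat{w}_0(\xi) \pm \tfrac{1}{2i|\xi|}\widehat{w}_1(\xi)$, but since you only use $\|g_\pm\|_{L^2}^2 \lesssim \|w_0\|_{L^2}^2 + \|w_1\|_{\dot H^{-1}}^2$ this has no effect.
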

\begin{proof}
Fix $R > 0$ and suppose $2^{-j_{0}} \sim R$ for some $j_{0} \in \mathbb{Z}$. By Theorem $\ref{t4.3}$ and Bernstein's inequality,
\begin{equation}\label{4.19.7}
\| P_{\leq j_{0}} w \|_{L_{t}^{2} L_{x}^{\infty}}^{2} \lesssim 2^{j_{0}(d - 1)} (\| P_{\leq j_{0}} w_{0} \|_{L^{2}}^{2} + \| P_{\leq j_{0}} w_{1} \|_{\dot{H}^{-1}}^{2}).
\end{equation}
Therefore, by H{\"o}lder's inequality, since $R \sim 2^{-j_{0}}$,
\begin{equation}\label{4.19.8}
\| P_{\leq j_{0}} w \|_{L_{t,x}^{2}(|x| \leq R)}^{2} \lesssim R (\| P_{\leq j_{0}} w_{0} \|_{L^{2}}^{2} + \| P_{\leq j_{0}} w_{1} \|_{\dot{H}^{-1}}^{2}).
\end{equation}
Also, if $\chi \in C_{0}^{\infty}(\mathbb{R}^{d})$, $\chi(x) = 1$ for $|x| \leq 1$, $\chi$ is supported on $|x| \leq 2$, if $j > j_{0}$,
\begin{equation}\label{4.19.9}
\aligned
\int \int |\nabla (\chi(\frac{x}{R}) P_{j} w)|^{2} dx dt \lesssim \frac{1}{R^{2}} \int \int_{|x| \leq 2R} |P_{j} w|^{2} dx dt + \int \int_{|x| \leq 2R} |\nabla P_{j} w|^{2} dx dt
\lesssim \| P_{j} w_{0} \|_{\dot{H}^{1}}^{2} + \| P_{j} w_{1} \|_{L^{2}}^{2}.
\endaligned
\end{equation}
Therefore, by Bernstein's inequality, for any $k$,
\begin{equation}\label{4.19.10}
\int \int |P_{k} (\chi(\frac{x}{R}) P_{j} w)|^{2} dx dt \lesssim 2^{j - k} R (\| P_{j} w_{0} \|_{L^{2}}^{2} + \| P_{j} w_{1} \|_{\dot{H}^{-1}}^{2}).
\end{equation}
Next, by the Fourier support properties of $P_{j} w$ and the radial Strichartz estimate, for any $N$,
\begin{equation}\label{4.19.11}
\aligned
\int \int |P_{\leq j - 3}(\chi(\frac{x}{R}) (P_{j} w))|^{2} dx dt \lesssim R^{2} \| P_{\leq j - 3}(\chi(\frac{x}{R}) (P_{j} w)) \|_{L_{t}^{2} L_{x}^{\frac{2d}{d - 2}}}^{2} \\ \lesssim R^{2} 2^{j} R^{-N} 2^{-jN} (\| P_{j} w_{0} \|_{L^{2}}^{2} + \| P_{j} w_{1} \|_{L^{2}}^{2}) \lesssim R 2^{-jN} 2^{j_{0}N} (\| P_{j} w_{0} \|_{L^{2}}^{2} + \| P_{j} w_{1} \|_{L^{2}}^{2}).
\endaligned
\end{equation}
Combining $(\ref{4.19.8})$, $(\ref{4.19.10})$ and $(\ref{4.19.11})$ with Plancherel's theorem,
\begin{equation}\label{4.19.12}
\int \int |\chi(\frac{x}{R}) w|^{2} dx dt \lesssim R (\| w_{0} \|_{L^{2}}^{2} + \| w_{1} \|_{\dot{H}^{-1}}^{2}).
\end{equation}
\end{proof}

\begin{corollary}\label{c4.7}
If $w$ solves
\begin{equation}
w_{tt} - \Delta w = F, \qquad w(0, x) = 0, \qquad w_{t}(0, x) = 0,
\end{equation}
\begin{equation}
\| w \|_{L_{t}^{\infty} \dot{H}^{1}} + \| w_{t} \|_{L_{t}^{\infty} L_{x}^{2}} \lesssim \sum_{k} 2^{k/2} \| F \|_{L_{t,x}^{2}(2^{k} \leq |x| \leq 2^{k + 1})}.
\end{equation}
\end{corollary}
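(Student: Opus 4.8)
The plan is a $TT^{*}$/duality argument that converts the \emph{free} local energy estimate of Proposition \ref{p4.6} into the weighted inhomogeneous bound. First I would reduce to a single dyadic shell: writing $F = \sum_{k \in \mathbb{Z}} F_{k}$ with $F_{k} = F \mathbf{1}_{\{2^{k} \leq |x| < 2^{k+1}\}}$, and letting $w_{k}$ solve $\partial_{tt} w_{k} - \Delta w_{k} = F_{k}$ with $w_{k}(0) = \partial_{t} w_{k}(0) = 0$, we have $w = \sum_{k} w_{k}$, so by the triangle inequality it suffices to prove
\[
\| w_{k} \|_{L_{t}^{\infty} \dot{H}^{1}} + \| \partial_{t} w_{k} \|_{L_{t}^{\infty} L_{x}^{2}} \lesssim 2^{k/2} \| F \|_{L_{t,x}^{2}(2^{k} \leq |x| \leq 2^{k+1})}
\]
for each $k$, and then sum.

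Fix a time $T$ and set $M_{T} := \bigl( \| \nabla w_{k}(T) \|_{L_{x}^{2}}^{2} + \| \partial_{t} w_{k}(T) \|_{L_{x}^{2}}^{2} \bigr)^{1/2}$, which is finite for each $T$ by the crude energy inequality $\frac{d}{dt} \sqrt{\| \partial_{t} w_{k} \|_{L_{x}^{2}}^{2} + \| \nabla w_{k} \|_{L_{x}^{2}}^{2}} \leq \| F_{k}(t) \|_{L_{x}^{2}}$. Let $v$ be the solution of the free wave equation $v_{tt} - \Delta v = 0$ with data prescribed at time $T$ by $v(T) = \partial_{t} w_{k}(T)$ and $\partial_{t} v(T) = \Delta w_{k}(T)$, so that $\| v(T) \|_{L_{x}^{2}}^{2} + \| \partial_{t} v(T) \|_{\dot{H}^{-1}}^{2} = M_{T}^{2}$ (using $\| \Delta f \|_{\dot{H}^{-1}} = \| \nabla f \|_{L_{x}^{2}}$). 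The pairing identity
\[
\frac{d}{dt} \int \bigl( \partial_{t} w_{k} \, v - w_{k} \, \partial_{t} v \bigr) \, dx = \int \bigl( (\Delta w_{k} + F_{k}) v - w_{k} \Delta v \bigr) \, dx = \int F_{k} \, v \, dx,
\]
together with $w_{k}(0) = \partial_{t} w_{k}(0) = 0$ and the choice of data for $v$, gives
\[
M_{T}^{2} = \int_{0}^{T} \!\! \int F_{k}(t,x) \, v(t,x) \, dx \, dt .
\]
Since $F_{k}$ is supported in the shell $2^{k} \leq |x| \leq 2^{k+1}$, Cauchy--Schwarz and Proposition \ref{p4.6} applied to $v$ with $R = 2^{k+1}$ (valid for data prescribed at time $T$ by time-translation invariance, and restricting the $t$-integral from $\mathbb{R}$ to $[0,T]$ only decreases the left side) yield
\[
M_{T}^{2} \leq \| F_{k} \|_{L_{t,x}^{2}} \, \| v \|_{L_{t,x}^{2}(|x| \leq 2^{k+1})} \lesssim \| F_{k} \|_{L_{t,x}^{2}} \, 2^{k/2} \bigl( \| v(T) \|_{L_{x}^{2}}^{2} + \| \partial_{t} v(T) \|_{\dot{H}^{-1}}^{2} \bigr)^{1/2} = 2^{k/2} \| F_{k} \|_{L_{t,x}^{2}} \, M_{T}.
\]
Dividing by $M_{T}$ gives $M_{T} \lesssim 2^{k/2} \| F_{k} \|_{L_{t,x}^{2}([0,T] \times \mathbb{R}^{d})} \leq 2^{k/2} \| F_{k} \|_{L_{t,x}^{2}(\mathbb{R} \times \mathbb{R}^{d})}$ uniformly in $T$; taking the supremum over $T$ (and symmetrically over $T < 0$) proves the per-shell bound, and summing in $k$ gives the corollary.

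The only points requiring care, rather than genuine difficulty, are: (i) justifying the integration by parts $\int \Delta w_{k} \, v = \int w_{k} \Delta v$ and the finiteness of $M_{T}$ — handled by a routine mollification of $F_{k}$ followed by a density/limiting argument, or simply by the crude energy estimate above; and (ii) correctly matching the regularity class of Proposition \ref{p4.6}, namely $L^{2} \times \dot{H}^{-1}$ for the data of $v$ — this is exactly why one pairs against $\partial_{t} v(T) = \Delta w_{k}(T)$ (producing $\| \nabla w_{k}(T) \|_{L_{x}^{2}}^{2}$) rather than using the naive pairing. I expect no step here to be a substantive obstacle.
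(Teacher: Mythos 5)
Your proof is correct and is precisely the ``standard duality argument'' that the paper's one-line proof alludes to: pairing the inhomogeneous solution $w_{k}$ against the free wave with data $\bigl(\partial_{t} w_{k}(T), \Delta w_{k}(T)\bigr)$ at time $T$ converts the $L^{2}\times\dot{H}^{-1}$ local energy estimate of Proposition~\ref{p4.6} into the per-shell inhomogeneous energy bound, and then one sums over the dyadic shells. The points you flag as requiring care (the choice of free-wave data so that the boundary term at $t=T$ is exactly $M_{T}^{2}$ while its $L^{2}\times\dot{H}^{-1}$ size is also $M_{T}$, the use of $\|\Delta f\|_{\dot{H}^{-1}}=\|\nabla f\|_{L^{2}}$, and the time-restriction/translation) are all handled correctly.
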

\begin{proof}
This follows from Proposition $\ref{p4.6}$ and standard duality arguments for Strichartz estimates.
\end{proof}

\section{Conformal energy and Morawetz estimates}
For large initial data, global well-posedness and scattering for $(\ref{1.1})$ is equivalent to proving that $(\ref{1.1})$ has a solution which satisfies
\begin{equation}\label{5.1}
\| u \|_{L_{t,x}^{\frac{(d + 1)(p - 1)}{2}}(\mathbb{R} \times \mathbb{R}^{d})} < \infty.
\end{equation}
Indeed, if $(\ref{5.1})$ holds, $\mathbb{R}$ can be partitioned into finitely many subintervals $I_{j}$ for which
\begin{equation}\label{5.2}
\| u \|_{L_{t,x}^{\frac{(d + 1)(p - 1)}{2}}(I_{j} \times \mathbb{R}^{d})} \ll 1.
\end{equation}
One can then use the Picard iteration argument from Theorem $\ref{t4.2}$ to prove global well-posedness and scattering.\medskip

On the other hand, if scattering is known to occur, then by $(\ref{4.5})$ and the Picard iteration argument from Theorem $\ref{t4.2}$, $(\ref{5.1})$ holds.\medskip

For radially symmetric initial data, to prove $(\ref{5.1})$, it is enough to prove that
\begin{equation}\label{5.2.1}
\| u \|_{L_{t}^{\frac{p + 1}{2(1 - s_{c})}} L_{x}^{p + 1}(\mathbb{R} \times \mathbb{R}^{d})} < \infty.
\end{equation}
\begin{lemma}\label{l2.2}
Let $u$ be a global, radially symmetric solution to $(\ref{1.1})$ that satisfies $(\ref{5.2.1})$. Then,
\begin{equation}\label{2.11}
\| u \|_{L_{t, x}^{\frac{(d + 1)(p - 1)}{2}}(\mathbb{R} \times \mathbb{R}^{d})} < \infty.
\end{equation}
\end{lemma}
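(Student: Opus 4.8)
The plan is to deduce $(\ref{2.11})$ from $(\ref{5.2.1})$ by a perturbative argument on short time intervals, using the radial Strichartz estimate of Theorem $\ref{t4.3}$ together with the radial Sobolev embedding to absorb the nonlinearity. Write $q_{0} = \frac{(d + 1)(p - 1)}{2}$ and $a = \frac{p + 1}{2(1 - s_{c})}$. One checks that both the $L_{t,x}^{q_{0}}$ norm and the $L_{t}^{a} L_{x}^{p + 1}$ norm are invariant under the scaling $(\ref{1.4})$, that $\frac{1}{q_{0}} + \frac{d}{q_{0}} = \frac{d}{2} - s_{c}$, and --- since $p - 1 > \frac{4}{d - 1}$ --- that $(q_{0}, q_{0})$ satisfies $(\ref{4.14})$ (in fact even $(\ref{4.3})$). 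Thus the free evolution of any $\dot H^{s_{c}} \times \dot H^{s_{c} - 1}$ datum lies in $L_{t,x}^{q_{0}}$, and by Lemma $\ref{l4.5}$ the retarded Duhamel operator maps a suitable dual Strichartz space into $L_{t,x}^{q_{0}}$. Since $\| u \|_{L_{t}^{a} L_{x}^{p + 1}(\mathbb{R} \times \mathbb{R}^{d})} < \infty$, I would partition $\mathbb{R}$ into finitely many intervals $I_{1}, \dots, I_{N}$ with $\| u \|_{L_{t}^{a} L_{x}^{p + 1}(I_{j} \times \mathbb{R}^{d})} \leq \eta$, where $\eta = \eta(d, p)$ is a small constant chosen below. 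It then suffices to establish a good local theory on each $I_{j}$: if $A_{j} := \| u(t_{j}) \|_{\dot H^{s_{c}}} + \| u_{t}(t_{j}) \|_{\dot H^{s_{c} - 1}} < \infty$ for some $t_{j} \in I_{j}$, then $\| u \|_{L_{t,x}^{q_{0}}(I_{j} \times \mathbb{R}^{d})}$ and $\sup_{t \in I_{j}}(\| u(t) \|_{\dot H^{s_{c}}} + \| u_{t}(t) \|_{\dot H^{s_{c} - 1}})$ are finite, with $A_{j + 1} \leq C(A_{j} + \eta^{\theta})$ for some $\theta > 0$. Granting this, one starts from a compact interval around $t = 0$, where all these quantities are finite by Theorem $\ref{t1.2}$ (recall $u$ is global), propagates finiteness through $I_{1}, \dots, I_{N}$, and sums the $N$ contributions to obtain $(\ref{2.11})$.

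For the good local theory, apply the inhomogeneous Strichartz estimate on $I_{j}$ with data at $t_{j}$ and forcing $F = |u|^{p - 1} u$, controlling the output in $L_{t,x}^{q_{0}} \cap L_{t}^{\infty}(\dot H^{s_{c}} \times \dot H^{s_{c} - 1})$ by $A_{j}$ plus dual Strichartz norms of $F$. The heart of the matter is the estimate of $F$. Using the radial Sobolev embedding $\| |x|^{\frac{2}{p - 1}} u(t) \|_{L_{x}^{\infty}} \lesssim \| u(t) \|_{\dot H^{s_{c}}}$, valid since $\frac{1}{2} < s_{c} < \frac{d}{2}$, we get $|u(t, x)|^{p - 1} \lesssim |x|^{-2} \| u(t) \|_{\dot H^{s_{c}}}^{p - 1}$ pointwise. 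Fixing a small $\epsilon' > 0$ and setting $M_{j} = \sup_{I_{j}} \| u \|_{\dot H^{s_{c}}}$, write $|F| \lesssim M_{j}^{(p - 1)(1 - \epsilon')} |x|^{-2(1 - \epsilon')} |u|^{1 + (p - 1)\epsilon'}$; decompose in dyadic annuli $|x| \sim 2^{k}$ and on each annulus estimate the $|u|^{1 + (p - 1)\epsilon'}$ factor using $\| u \|_{L_{t}^{a} L_{x}^{p + 1}(I_{j})} \leq \eta$ (using the pointwise bound again to control the small-$|x|$ contribution). Because the retained factor now has spatial Lebesgue exponent strictly below $p + 1$, there is room in $(\ref{4.14})$, so for $\epsilon'$ small one may place $F$ in a space $L_{t}^{\tilde\rho'} L_{x}^{\tilde\sigma'}$ with $(\tilde\rho, \tilde\sigma)$ satisfying $(\ref{4.14})$ and $\tilde\rho' < q_{0}$ (so Lemma $\ref{l4.5}$ applies), summing the geometric series in $k$. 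This yields $\| F \|_{L_{t}^{\tilde\rho'} L_{x}^{\tilde\sigma'}(I_{j})} \lesssim M_{j}^{\beta} \eta^{\theta'}$ with $\beta, \theta' > 0$, which for $\eta$ small relative to powers of $M_{j}$ closes the estimate and gives $A_{j + 1} \leq C(A_{j} + \eta^{\theta})$.

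I expect the main obstacle to be exactly this nonlinear estimate when $p$ is close to the energy-critical value $1 + \frac{4}{d - 2}$ (equivalently $s_{c}$ close to $1$): there the scaling-natural dual space $L_{t}^{a/p} L_{x}^{(p + 1)/p}$ for $|u|^{p - 1} u$, which matches the scaling of $(\ref{1.1})$ exactly, fails the admissibility condition $(\ref{4.14})$ (and a fortiori $(\ref{4.3})$), so the hypothesis $(\ref{5.2.1})$ alone cannot close the Duhamel term and one is forced to use the radial Sobolev trade to push the spatial exponent into the admissible range. The price is the singular weight $|x|^{-2(1 - \epsilon')}$, and one must check --- using $\frac{4}{d - 1} < p - 1 < \frac{4}{d - 2}$ --- that the dyadic sum in $k$ converges, that the time exponent of $F$ remains below $q_{0}$ so Lemma $\ref{l4.5}$ applies, and that the factor kept in $L_{t}^{a} L_{x}^{p + 1}$ retains at least a first power of $u$, so that the smallness of $\eta$ is genuinely exploited. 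This step uses radial symmetry in an essential way, through both the radial Sobolev embedding and the radial Strichartz estimate of Theorem $\ref{t4.3}$, which is why the non-radial argument of \cite{dodson2021global} is limited to a narrower range of $p$.
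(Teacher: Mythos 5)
Your overall strategy (partition $\mathbb{R}$ into intervals where $\| u \|_{L_t^a L_x^{p+1}} \leq \eta$ and run a local perturbative Strichartz argument) is the same as the paper's, and your diagnosis of the obstruction --- that the scaling-natural dual Strichartz space $L_t^{a/p} L_x^{(p+1)/p}$ for $|u|^{p-1} u$ fails admissibility, so one must trade one factor of $u$ for something else --- is exactly right. What differs is the trade: the paper invokes the fractional product/chain rule to distribute $|\nabla|^{s_c - \frac{1}{2}}$ across the factors, arriving at the estimate (rewritten in your notation)
\begin{equation*}
\| |\nabla|^{s_c - \frac{1}{2}} u \|_{L_t^2 L_x^{\frac{2d}{d-2}}(I_j)} \lesssim A_j + \eta^{p-1}\, \| |\nabla|^{s_c - \frac{1}{2}} u \|_{L_t^2 L_x^{\frac{2d}{d-2}}(I_j)},
\end{equation*}
where the coefficient $\eta^{p-1}$ of the bootstrap quantity is \emph{independent} of the data size (the paper explicitly emphasizes this after $(\ref{2.14.1})$). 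You instead use the radial Sobolev embedding pointwise in time, which forces the coefficient $M_j^{(p-1)(1-\epsilon')}$, $M_j := \sup_{I_j} \| u(t) \|_{\dot H^{s_c}}$, into the estimate.

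This is where the gap lies. Your closing bound has the shape $M_j \lesssim A_j + M_j^\beta \eta^{\theta}$ with $\beta = (p-1)(1-\epsilon')$. For $d = 4,5$ one has $p - 1 > 1$, so for any small $\epsilon'$ the exponent $\beta$ exceeds $1$: the bootstrap is then superlinear in $M_j$, and closing it requires $\eta$ to be small compared to a negative power of $M_j$ --- precisely the circularity you flag when you write ``for $\eta$ small relative to powers of $M_j$''. One can try to fix $\epsilon' > \frac{p-2}{p-1}$ so that $\beta < 1$, in which case the fixed-point argument does give $M_j \lesssim \max\{A_j, \eta^{\theta/(1-\beta)}\}$ with constants independent of the data; but then the radial-Sobolev weight $|x|^{-2(1-\epsilon')}$ and the retained factor $|u|^{1+(p-1)\epsilon'}$ are pushed close to the scaling-critical configuration you yourself identified as the obstruction, and you never verify that a pair $(\tilde\rho, \tilde\sigma)$ satisfying $(\ref{4.14})$ exists there, nor that the dyadic sum in $k$ actually converges (at the scaling-critical level every annular piece has the same size, so convergence needs an off-diagonal gain of the type used in Lemma $\ref{l4.4}$ and Theorem $\ref{t7.1}$, which you do not supply). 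The fractional product rule sidesteps all of this at once, which is why the paper uses it here; your more elementary mechanism is in fact the one the paper reserves for Theorem $\ref{t7.1}$, where the truncated nonlinearity $\chi(u/w)|u|^{p-1}u$ genuinely rules out fractional derivatives, but there $p - 1 < 1$ (since $d > 5$) and the bootstrap exponent is automatically sublinear, which is exactly what fails for $d = 4,5$.
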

\begin{proof}
Partition $\mathbb{R}$ into finitely many intervals $I_{j}$ for which
\begin{equation}\label{2.12}
\| u \|_{L_{t}^{\frac{p + 1}{2(1 - s_{c})}} L_{x}^{p + 1}(I_{j} \times \mathbb{R}^{d})} \leq \epsilon,
\end{equation}
on each subinterval, for some $0 < \epsilon \ll 1$. Suppose $I_{j} = [a_{j}, b_{j}]$. By Duhamel's principle, for $t \in I_{j}$,
\begin{equation}\label{2.13}
u(t) = \cos((t - a_{j}) \sqrt{-\Delta}) u(a_{j}) + \frac{\sin((t - a_{j}) \sqrt{-\Delta})}{\sqrt{-\Delta}} u_{t}(a_{j}) - \int_{a_{j}}^{t} \frac{\sin((t - \tau) \sqrt{-\Delta})}{\sqrt{-\Delta}} |u(\tau)|^{p - 1} u(\tau) d\tau.
\end{equation}
Using the radially symmetric Strichartz estimates in Theorem $\ref{t4.3}$ and Lemma $\ref{l4.5}$ along with the fractional product rule in \cite{christ1991dispersion} (see also \cite{taylor2000tools}), we have
\begin{equation}\label{2.14}
\aligned
\| |\nabla|^{s_{c} - \frac{1}{2}} u \|_{L_{t}^{2} L_{x}^{\frac{2d}{d - 2}}(I_{j} \times \mathbb{R}^{d})} \lesssim \| u(a_{j}) \|_{\dot{H}^{s_{c}}} + \| u_{t}(a_{j}) \|_{\dot{H}^{s_{c} - 1}} + \| |\nabla|^{s_{c} - 1/2} u \|_{L_{t}^{2} L_{x}^{\frac{2d}{d - 2}}(I_{j} \times \mathbb{R}^{d})} \| u \|_{L_{t}^{\frac{p + 1}{2(1 - s_{c})}} L_{x}^{p + 1}(I_{j} \times \mathbb{R}^{d})}^{p - 1} \\
\lesssim \| u(a_{j}) \|_{\dot{H}^{s_{c}}} + \| u_{t}(a_{j}) \|_{\dot{H}^{s_{c} - 1}} + \epsilon^{p - 1} \| |\nabla|^{s_{c} - 1/2} u \|_{L_{t}^{2} L_{x}^{\frac{2d}{d - 2}}(I_{j} \times \mathbb{R}^{d})}.
\endaligned
\end{equation}
For $0 < \epsilon \ll 1$ sufficiently small,
\begin{equation}\label{2.14.1}
\| |\nabla|^{s_{c} - \frac{1}{2}} u \|_{L_{t}^{2} L_{x}^{\frac{2d}{d - 2}}(I_{j} \times \mathbb{R}^{d})} \lesssim \| u(a_{j}) \|_{\dot{H}^{s_{c}}} + \| u_{t}(a_{j}) \|_{\dot{H}^{s_{c} - 1}}.
\end{equation}
Notice that $0 < \epsilon \ll 1$ does not depend at all on $\| u(a_{j}) \|_{\dot{H}^{s_{c}}}$ and $\| u_{t}(a_{j}) \|_{\dot{H}^{s_{c} - 1}}$. Furthermore, combining Theorems $\ref{t4.1}$ and $\ref{t4.3}$, Lemma $\ref{l4.5}$, $(\ref{2.14})$, and $(\ref{2.14.1})$,
\begin{equation}\label{2.14.2}
\| u \|_{L_{t,x}^{\frac{(d + 1)(p - 1)}{2}} \cap L_{t}^{\infty} \dot{H}^{s_{c}}(I_{j} \times \mathbb{R}^{d})} + \| u_{t} \|_{L_{t}^{\infty} \dot{H}^{s_{c} - 1}(I_{j} \times \mathbb{R}^{d})} \lesssim \| u(a_{j}) \|_{\dot{H}^{s_{c}}} + \| u_{t}(a_{j}) \|_{\dot{H}^{s_{c} - 1}}.
\end{equation}
Iterating $(\ref{2.14.2})$ finitely many times proves $(\ref{2.11})$.
\end{proof}

For large data, \cite{strauss1981nonlinear} and \cite{strauss1968decay} proved global well-posedness and scattering for $(\ref{1.1})$ with large initial data with sufficient regularity and decay. The proof uses the conformal energy.
\begin{theorem}\label{t5.1}
Suppose $u_{0}$ and $u_{1}$ are initial data that satisfy
\begin{equation}\label{5.3}
\| \langle x \rangle \nabla u_{0} \|_{L^{2}} + \| u_{0} \|_{L^{2}} + \| \langle x \rangle u_{1} \|_{L^{2}} + \| \langle x \rangle^{\frac{2}{p + 1}} u_{0} \|_{L_{x}^{p + 1}} < \infty.
\end{equation}
Here, $\langle x \rangle = (1 + |x|^{2})^{1/2}$. Then the solution to $(\ref{1.1})$ is globally well-posed and scattering.
\end{theorem}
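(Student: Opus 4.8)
The plan is to exploit the conformal (Morawetz) conservation law for the defocusing equation \eqref{1.1}. Define the conformal energy
\[
\mathcal{E}(t) = \int \left( \frac{1}{2}|(t\partial_t + x\cdot\nabla + \tfrac{2}{p-1})u|^2 + \frac{t^2+|x|^2}{2}|\nabla u|^2 - \text{(lower order)} + \frac{t^2}{p+1}|u|^{p+1} \right) dx,
\]
more precisely the standard conformally-weighted energy associated to the vector field $K_0 = (t^2+|x|^2)\partial_t + 2tx\cdot\nabla$ together with the correction term $\frac{2(p-1)t}{p-1}u\,\partial_t u$-type scaling piece. First I would verify that under the hypothesis \eqref{5.3} this quantity is finite at $t=0$: the weighted gradient term is controlled by $\|\langle x\rangle\nabla u_0\|_{L^2}$, the scaling-vector-field term by $\|\langle x\rangle\nabla u_0\|_{L^2} + \|u_0\|_{L^2} + \|\langle x\rangle u_1\|_{L^2}$, and the potential term by $\|\langle x\rangle^{2/(p+1)}u_0\|_{L_x^{p+1}}^{p+1}$. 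The point of including $\|u_0\|_{L^2}$ and the precise weight $\langle x\rangle^{2/(p+1)}$ in \eqref{5.3} is exactly to make $\mathcal{E}(0)<\infty$.

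Next I would compute $\frac{d}{dt}\mathcal{E}(t)$ along solutions of \eqref{1.1}. The multiplier identity for $K_0 u + c\, t\, u$ applied to $u_{tt}-\Delta u + |u|^{p-1}u = 0$ produces, after integration by parts, a bulk term proportional to
\[
\left( \frac{d-1}{2} - \frac{2}{p-1}\cdot\frac{p+1}{p+1} \right)\int t|u|^{p+1}\,dx = \left( s_c - 1 \right)\cdot(\text{const})\int t|u|^{p+1}\,dx,
\]
i.e. in the intercritical range $s_c<1$ the defocusing sign gives $\frac{d}{dt}\mathcal{E}(t) \le 0$ (up to the conventional choice of sign), so $\mathcal{E}(t)\le \mathcal{E}(0)$ for all $t\ge 0$; the focusing case would flip a sign and fail here, consistent with the discussion in the introduction. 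From the monotonicity I extract two consequences: a uniform-in-time bound on $\|(t^2+|x|^2)^{1/2}\nabla u(t)\|_{L^2}$ and on $t^2\|u(t)\|_{L_x^{p+1}}^{p+1}$, and — crucially for scattering — an integrated-in-time estimate. Dividing through and integrating the defect term gives control of
\[
\int_1^\infty \frac{1}{t}\int |u(t,x)|^{p+1}\,dx\,dt \lesssim \mathcal{E}(0),
\]
i.e. the quantity $\|\mathcal{E}(t)/t^2\|_{L_t^1}$-type bound that, together with $t^2\|u(t)\|_{L_x^{p+1}}^{p+1}\lesssim\mathcal{E}(0)$, yields $\|u(t)\|_{L_x^{p+1}} \lesssim t^{-2/(p+1)}$ and hence a bound on $\|u\|_{L_t^{q}L_x^{p+1}}$ for an appropriate $q$ — comparing exponents, $q = \frac{p+1}{2(1-s_c)}$, which is exactly \eqref{5.2.1}.

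Finally, to convert \eqref{5.2.1} into global well-posedness and scattering in the sense demanded, I would invoke Lemma \ref{l2.2} to upgrade the $L_t^{q}L_x^{p+1}$ bound to the scattering norm $\|u\|_{L_{t,x}^{(d+1)(p-1)/2}}<\infty$, and then the local theory of Theorem \ref{t4.2} / Picard iteration to conclude scattering and continuous dependence. The main obstacle is the conformal identity itself: one must justify all the integrations by parts (boundary terms at spatial infinity vanish because of the decay built into \eqref{5.3} — this is where a density/approximation argument with smooth compactly supported data is needed, propagating the conformal energy bound under approximation), and one must carefully track the coefficient of the $\int t|u|^{p+1}$ defect term to confirm it has a favorable sign throughout the range $\frac{4}{d-1}<p-1<\frac{4}{d-2}$. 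A secondary technical point is that the conformal energy degenerates as $t\to 0$ for the weight $t^2$, so the integrated decay estimate must be run on $t\ge 1$ with the interval $[0,1]$ absorbed into the (already known) local theory from Theorem \ref{t1.2}.
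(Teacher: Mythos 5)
Your proposal is essentially the paper's own proof. The paper also uses the conformal energy (written in null-coordinate form via $L = \partial_t + \frac{x}{|x|}\cdot\nabla$ and $\underline{L} = \partial_t - \frac{x}{|x|}\cdot\nabla$, which is the standard $K_0$-multiplier quantity algebraically rearranged), translates the data to time $t=1$ so that $(\ref{5.3})$ gives $\mathcal{E}(1)<\infty$, observes that $\mathcal{E}(t)$ is nonincreasing in the defocusing intercritical range, and then deduces $(\ref{5.2.1})$ from $\|u(t)\|_{L_x^{p+1}}^{p+1}\lesssim \mathcal{E}(t)/t^2 \leq \mathcal{E}(1)/t^2$, which is integrable to the power $\frac{1}{2(1-s_c)}$ on $[1,\infty)$ since $s_c>0$. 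One small remark: you invoke an integrated-in-time defect bound $\int_1^\infty t^{-1}\|u(t)\|_{L^{p+1}}^{p+1}\,dt\lesssim\mathcal{E}(1)$ as a separate ingredient, but the pointwise decay $\|u(t)\|_{L_x^{p+1}}\lesssim t^{-2/(p+1)}$ already follows from the monotonicity $\mathcal{E}(t)\leq\mathcal{E}(1)$ alone, and it is this decay, not the integrated defect, that the paper feeds into $(\ref{5.2.1})$; the integrated estimate is correct but not needed. Otherwise the argument, including the shift to $t\geq 1$, the sign check of the defect coefficient over the full intercritical range, and the final upgrade via Lemma $\ref{l2.2}$ and Theorem $\ref{t4.2}$, matches the paper's route.
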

\begin{proof}
The conformal energy,
\begin{equation}\label{5.4}
\aligned
\mathcal E(u) = \frac{1}{4} \int_{\mathbb{R}^{d}} |(t + |x|) Lu + (d - 1) u|^{2} + |(t - |x|) \underline{L} u + (d - 1) u|^{2} dx \\ + \frac{1}{2} \int (t^{2} + |x|^{2}) |\cancel{\nabla} u|^{2} dx + \frac{1}{p + 1} \int (t^{2} + |x|^{2}) |u|^{p + 1} dx,
\endaligned
\end{equation}
is a decreasing quantity, where $L = (\partial_{t} + \frac{x}{|x|} \cdot \nabla)$ and $\underline{L} = (\partial_{t} - \frac{x}{|x|} \cdot \nabla)$.\medskip

Now then, translating in time so that the initial data is at time $t = 1$, $(\ref{5.3})$ implies that $\mathcal E(1) < \infty$. Since $\mathcal E(t) \leq \mathcal E(1)$,
\begin{equation}\label{5.15}
\| \frac{\mathcal E(t)}{t^{2}} \|_{L_{t}^{\frac{1}{2(1 - s_{c})}}([1, \infty)} < \infty.
\end{equation}
Since $\| u \|_{L^{p + 1}}^{p + 1} \lesssim \frac{\mathcal E(t)}{t^{2}}$, $(\ref{5.15})$ implies $(\ref{5.2.1})$.
\end{proof}

As a warm-up to proving Theorem $\ref{t1.1}$, we prove scattering for $(\ref{1.1})$ with radially symmetric initial data satisfying
\begin{equation}\label{5.15.1}
\| u_{0} \|_{B_{1,1}^{\frac{d}{2} + s_{c}}} + \| u_{1} \|_{B_{1,1}^{\frac{d}{2} + s_{c} - 1}} = A < \infty.
\end{equation}
\begin{theorem}
Theorem $\ref{t1.1}$ holds for radial initial data in $B_{1,1}^{\frac{d}{2} + s_{c}} \times B_{1,1}^{\frac{d}{2} + s_{c} - 1}$. Furthermore, the scattering norm $(\ref{5.2.1})$ depends polynomially on $A$ in $(\ref{5.15.1})$.
\end{theorem}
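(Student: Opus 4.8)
The plan is to run the conformal‑energy scheme sketched in the introduction, following \cite{dodson2022global,dodson2023sharp}. Split the data by a Littlewood--Paley cutoff at a fixed frequency $2^{N}$: set $(v_0,v_1)=P_{\le N}(u_0,u_1)$ and $(w_0,w_1)=P_{>N}(u_0,u_1)$, with $N$ an absolute constant (only polynomial dependence on $A$ is wanted, so $N$ need not be large). The $\ell^{1}$, $L^{1}_{x}$‑based structure of the norm in $(\ref{5.15.1})$ makes the low‑frequency piece $(v_0,v_1)$ of finite conformal energy with $\mathcal{E}(v)(0)\lesssim C(A)$ polynomial in $A$, while $(w_0,w_1)$ still obeys a $B_{1,1}$ bound $\lesssim A$. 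Let $w$ be the \emph{linear} evolution of $(w_0,w_1)$ and let $v$ solve
\begin{equation*}
v_{tt}-\Delta v+|v|^{p-1}v=-F,\qquad F=|v+w|^{p-1}(v+w)-|v|^{p-1}v=O\!\left(|w|(|v|+|w|)^{p-1}\right),
\end{equation*}
with $v(0)=v_{0}$, $v_{t}(0)=v_{1}$; then $u:=v+w$ solves $(\ref{1.1})$, local theory (Theorem $\ref{t1.2}$) together with the a priori bound below makes $v$ global, by Theorem $\ref{t5.1}$ the data $(v_0,v_1)$ alone scatters, and $w\in L^{(p+1)/(2(1-s_{c}))}_{t}L^{p+1}_{x}$ by Strichartz. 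Everything reduces to the interaction term $F$.

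The core step is the conformal‑energy estimate for $v$. Differentiating $\mathcal{E}(v)(t)$ along the flow, the $|v|^{p-1}v$‑contribution is the good (monotone, defocusing) term and the $F$‑contribution is $(\ref{1.14})$, bounded via $(\ref{1.15})$ by $\|(t+|x|)Lv+(d-1)v\|_{L^{2}}\,\|(t+|x|)|w|^{(p-1)/2}\|_{L^{\infty}}\,\|w\|_{L^{p+1}_{x}}^{(p-1)/2}\,\|v\|_{L^{p+1}_{x}}$ and its $\underline{L}$ analogue. I would prove $\|(t+|x|)|w|^{(p-1)/2}\|_{L^{\infty}}\lesssim_{A}1$ by combining the radial Sobolev embedding $\||x|^{2/(p-1)}w\|_{L^{\infty}}\lesssim\|w\|_{\dot H^{s_{c}}}$ (handling $|x|\gtrsim t$, since $\tfrac{p-1}{2}\cdot\tfrac{2}{p-1}=1$) with the $L^{1}\to L^{\infty}$ dispersive decay $\|w(t)\|_{L^{\infty}}\lesssim_{A}|t|^{-(d-1)/2}$ coming from the $B_{1,1}$ norm (handling $|x|\lesssim t$, with $p-1\ge 4/(d-1)$ exactly keeping $|t|\,|t|^{-(d-1)(p-1)/4}$ bounded), working on $|t|\ge 1$ after translating the data to $t=1$ and treating $[-1,1]$ by local theory. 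Since $\|(t+|x|)Lv+(d-1)v\|_{L^{2}}^{2}\lesssim\mathcal{E}(v)(t)$ and $\|v(t)\|_{L^{p+1}_{x}}^{p+1}\lesssim\mathcal{E}(v)(t)/t^{2}$, this yields $\tfrac{d}{dt}\mathcal{E}(v)(t)\lesssim_{A}\mathcal{E}(v)(t)^{\frac12+\frac1{p+1}}t^{-\frac{2}{p+1}}\|w(t)\|_{L^{p+1}_{x}}^{(p-1)/2}$ (plus an analogous term with $\|w(t)\|_{L^{p+1}_{x}}^{(p+1)/2}$ from the $|w|^{p}$ part of $F$); as the power of $\mathcal{E}(v)$ is $<1$, this closes by a Gronwall/bootstrap argument once the relevant time integrals of $\|w(t)\|_{L^{p+1}_{x}}$ are finite, which I would derive from the radial Strichartz estimates (Theorem $\ref{t4.3}$) and the weighted bound of Lemma $\ref{l4.4}$. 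The outcome is $\|\mathcal{E}(v)(t)/t^{2}\|_{L^{1/(2(1-s_{c}))}_{t}(\mathbb{R})}\lesssim_{A}1$, hence $\|v\|_{L^{(p+1)/(2(1-s_{c}))}_{t}L^{p+1}_{x}}\lesssim_{A}1$; with the Strichartz bound for $w$ this gives $(\ref{5.2.1})$ for $u$ with a constant polynomial in $A$, and Lemma $\ref{l2.2}$ upgrades it to $(\ref{2.11})$, i.e. $(\ref{1.3})$, with polynomial dependence on $A$.

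The main obstacle I anticipate is the convergence of the cross‑term time integral: the conformal multiplier injects essentially a factor $t^{2}$, and the pointwise decay $\|w(t)\|_{L^{\infty}}\lesssim_{A}|t|^{-(d-1)/2}$ is only borderline against it — the $L^{2}_{t}$‑integrated, $|x|$‑weighted estimate of Lemma $\ref{l4.4}$ in the exterior region $|x|\gtrsim t$, which is special to radial data and $\tfrac12<s_{c}<1$, is what I expect to close the gap, including for the $|w|^{p}$ self‑interaction in $F$. A secondary technical point is verifying $\mathcal{E}(v)(0)\lesssim C(A)$ with $C$ polynomial, i.e. that the $L^{1}$‑based Besov norm genuinely controls $\||x|\nabla v_{0}\|_{L^{2}}$, $\|v_{0}\|_{L^{2}}$, $\||x|v_{1}\|_{L^{2}}$ and $\||x|^{2/(p+1)}v_{0}\|_{L^{p+1}}$ after the low‑frequency cutoff.
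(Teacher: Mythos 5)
Your overall strategy — conformal energy for the remainder, dispersive plus radial Sobolev control of $\|(t\pm|x|)|w|^{(p-1)/2}\|_{L^\infty}$, a Fubini/Gronwall bootstrap in $L^{1/(2(1-s_c))}_t$, and then Lemma \ref{l2.2} to upgrade to the scattering norm — matches the paper's, but your choice of decomposition introduces a gap that you yourself flag as a ``secondary technical point'' and that is in fact fatal. You cut by frequency, set $(v_0,v_1)=P_{\le N}(u_0,u_1)$, and require $\mathcal{E}(v)(0)\lesssim_A 1$. This cannot hold: the $B_{1,1}^{d/2+s_c}\times B_{1,1}^{d/2+s_c-1}$ norm is translation invariant, while the conformal energy contains $\||x|\nabla v_0\|_{L^2}$ and $\||x|v_1\|_{L^2}$; translating a fixed bump to $|x|\sim R$ leaves the Besov norm unchanged but makes $\mathcal{E}(v)(0)$ grow like $R^2$. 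Bernstein controls unweighted norms of $P_{\le N}u_0$, but no frequency cutoff recovers spatial decay, so your first step does not get off the ground.

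The paper sidesteps this with a different decomposition: $w$ is the \emph{linear} evolution of the \emph{full} data $(u_0,u_1)$, and $v$ solves $v_{tt}-\Delta v+|u|^{p-1}u=0$ with \emph{zero} initial data, so $\mathcal{E}(v)(0)=0$ trivially and no weighted initial-data estimate is ever needed. The interaction error in $\frac{d}{dt}\mathcal{E}$ is then $|u|^{p-1}u-|v|^{p-1}v=O\!\left(|w|(|v|+|w|)^{p-1}\right)$, the same expression you wrote, and your estimate $\|(t\pm|x|)|w|^{(p-1)/2}\|_{L^\infty}\lesssim A^{(p-1)/2}$ from dispersion and radial Sobolev (which is exactly where the $B_{1,1}$ hypothesis enters), together with the Fubini closure using $\frac{p-1}{p+1}<\frac12$, all go through as you describe. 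If you swap your frequency split for the paper's linear/Duhamel split, the rest of your proposal is correct and coincides with the paper's proof; as written, the argument breaks at the initial conformal energy.
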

\begin{proof}
The proof follows \cite{dodson2022global}. Let $u$ be a solution to $(\ref{1.1})$ with initial data that satisfies $(\ref{5.15.1})$. Split the solution $u = w + v$, where $v$ and $w$ solve
\begin{equation}\label{5.15.2}
\aligned
w_{tt} - \Delta w = 0, \qquad w(0, x) &= u_{0}, \qquad w_{t}(0,x) = u_{1}, \\
v_{tt} - \Delta v + |u|^{p - 1} u = 0, \qquad v(0, x) &= 0, \qquad v_{t}(0, x) = 0.
\endaligned
\end{equation}
Now define the conformal energy
\begin{equation}\label{5.15.3}
\aligned
\mathcal E(t) = \frac{1}{4} \int_{\mathbb{R}^{d}} |(t + |x|) Lv + (d - 1) v|^{2} + |(t - |x|) \underline{L} v + (d - 1) v|^{2} dx \\ + \frac{1}{2} \int (t^{2} + |x|^{2}) |\cancel{\nabla} v|^{2} dx + \frac{1}{p + 1} \int (t^{2} + |x|^{2}) |v|^{p + 1} dx.
\endaligned
\end{equation}
Then by direct computation,
\begin{equation}\label{5.15.4}
\aligned
\frac{d}{dt} \mathcal E(t) \leq -\frac{1}{2} \langle (t + |x|) Lv + (d - 1)v, (t + |x|) \{ |u|^{p - 1} u - |v|^{p - 1} v \} \rangle \\ - \frac{1}{2} \langle (t - |x|) \underline{L}v + (d - 1)v, (t - |x|) \{ |u|^{p - 1} u - |v|^{p - 1} v \} \rangle \\
\lesssim \mathcal E(t)^{1/2} \| (t \pm |x|) |w|^{\frac{p - 1}{2}} \|_{L^{\infty}} \| w \|_{L^{p + 1}}^{\frac{3 - p}{2}} \| v \|_{L^{p + 1}}^{p - 1} + \mathcal E(t)^{1/2} \| (t \pm |x|) |w|^{\frac{p - 1}{2}} \|_{L^{\infty}} \| w \|_{L^{p + 1}}^{\frac{p + 1}{2}}.
\endaligned
\end{equation}
Combining the dispersive estimate with the radial Sobolev embedding theorem,
\begin{equation}\label{5.15.5}
\| (t + |x|) |w|^{\frac{p - 1}{2}} \|_{L^{\infty}} + \| (t - |x|) |w|^{\frac{p - 1}{2}} \|_{L^{\infty}} \lesssim A^{\frac{p - 1}{2}}.
\end{equation}
Therefore, since $\mathcal E(0) = 0$, by the fundamental theorem of calculus,
\begin{equation}\label{5.15.6}
\mathcal E(t) \lesssim A^{\frac{p - 1}{2}} \int_{0}^{t} \frac{\mathcal E(\tau)^{\frac{1}{2} + \frac{p - 1}{p + 1}}}{\tau^{\frac{2(p - 1)}{p + 1}}} \| w(\tau) \|_{L^{p + 1}}^{\frac{3 - p}{2}} d\tau + A^{\frac{p - 1}{2}} \int_{0}^{t} \mathcal E(\tau)^{1/2} \| w(\tau) \|_{L^{p + 1}}^{\frac{p + 1}{2}} d\tau.
\end{equation}
Now, for any $r \geq 1$, by Fubini's theorem,
\begin{equation}\label{5.15.7}
\aligned
\int_{0}^{\infty} \frac{1}{t^{2r}} (\int_{0}^{t} \mathcal E(\tau)^{\frac{1}{2} + \frac{p - 1}{p + 1}} \frac{1}{\tau^{\frac{2(p - 1)}{p + 1}}} f(\tau)^{\frac{3 - p}{2(p + 1)}} d\tau)^{r} dt \lesssim_{r} \int_{0}^{\infty} \mathcal E(\tau)^{r(\frac{1}{2} + \frac{p - 1}{p + 1})} \frac{1}{\tau^{r(1 + \frac{2(p - 1)}{p + 1})}} f(\tau)^{r(\frac{3 - p}{2(p + 1)})} d\tau \\
\lesssim (\int_{0}^{\infty} \frac{\mathcal E(t)^{r}}{t^{2r}} dt)^{\frac{1}{2} + \frac{p - 1}{p + 1}} (\int_{0}^{\infty} f(t)^{r \frac{3 - p}{2(p + 1)}} dt)^{\frac{3 - p}{2(p + 1)}}.
\endaligned
\end{equation}
Also, by Fubini's theorem,
\begin{equation}\label{5.15.8}
\aligned
\int_{0}^{\infty} \frac{1}{t^{2r}} (\int_{0}^{t} \mathcal E(\tau)^{\frac{1}{2}}  f(\tau)^{\frac{1}{2}} d\tau)^{r} dt \lesssim_{r}  (\int_{0}^{\infty} \frac{\mathcal E(t)^{r}}{t^{2r}} dt)^{\frac{1}{2}} (\int_{0}^{\infty} f(t)^{r} dt)^{\frac{1}{2}}.
\endaligned
\end{equation}
Since $\frac{p - 1}{p + 1} < \frac{1}{2}$ when $p < 3$,
\begin{equation}\label{5.15.9}
\int_{0}^{\infty} (\frac{\mathcal E(t)}{t^{2}})^{\frac{1}{2(1 - s_{c})}} dt \lesssim 1 + A^{\sigma(p, d)},
\end{equation}
for some $\sigma(p, d) < \infty$.
\end{proof}

\section{Morawetz estimate}
Now compute a Morawetz estimate.
\begin{proposition}\label{p5.2}
For any $T > 0$, if $u$ solves $(\ref{1.1})$,
\begin{equation}\label{5.16}
\aligned
\int_{0}^{T} \int [\frac{1}{|x|^{3}} u^{2} + \frac{1}{|x|} |u|^{p + 1}] dx dt \lesssim \sup_{t \in [0, T]} \| \nabla_{t, x} u \|_{L^{2}}^{2}.
\endaligned
\end{equation}
\end{proposition}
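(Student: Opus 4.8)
The plan is to run the classical Morawetz (virial) computation with the weight $a(x)=|x|$, i.e.\ with the multiplier $P[u]:=\frac{x}{|x|}\cdot\nabla u+\frac{d-1}{2|x|}u=\nabla a\cdot\nabla u+\tfrac12(\Delta a)u$. Because $a$ is singular at the origin I would carry out the computation with the smooth regularization $a_\varepsilon(x)=(|x|^2+\varepsilon^2)^{1/2}$, derive the identity for $a_\varepsilon$, and let $\varepsilon\to0$ at the end; for a Schwartz-in-$x$ solution every step is legitimate, and the general case follows by approximation, the left side of $(\ref{5.16})$ being a monotone limit of its truncations. Writing $P_\varepsilon[u]$ for the multiplier built from $a_\varepsilon$ and $M_\varepsilon(t):=\int u_t\,P_\varepsilon[u]\,dx$, the first step is to note that $\int u_t\,P_\varepsilon[u_t]\,dx=0$ (a single integration by parts), so that, using the equation in the form $u_{tt}=\Delta u-|u|^{p-1}u$,
\[
\frac{d}{dt}M_\varepsilon(t)=\int(\Delta u)\,P_\varepsilon[u]\,dx-\int|u|^{p-1}u\,P_\varepsilon[u]\,dx.
\]

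Two integrations by parts in the first term give the standard identity $\int(\Delta u)P_\varepsilon[u]\,dx=-\int\partial^2_{jk}a_\varepsilon\,\partial_j u\,\partial_k u\,dx+\tfrac14\int(\Delta^2 a_\varepsilon)u^2\,dx$, while moving the radial derivative onto $|u|^{p+1}$ gives $-\int|u|^{p-1}u\,P_\varepsilon[u]\,dx=-\tfrac{p-1}{2(p+1)}\int(\Delta a_\varepsilon)|u|^{p+1}\,dx$. The sign facts one uses are $\partial^2_{jk}a_\varepsilon=a_\varepsilon^{-1}(\delta_{jk}-x_jx_k a_\varepsilon^{-2})\ge 0$ and $\Delta a_\varepsilon=a_\varepsilon^{-3}\big((d-1)|x|^2+d\varepsilon^2\big)\ge 0$. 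Integrating over $t\in[0,T]$ and letting $\varepsilon\to0$ by dominated convergence — using $0\le\partial^2 a_\varepsilon(\nabla u,\nabla u)\le|\nabla u|^2/|x|$, $|\Delta^2 a_\varepsilon|\lesssim|x|^{-3}$, and the limits $\Delta a_\varepsilon\to(d-1)/|x|$, $\Delta^2 a_\varepsilon\to\Delta\big((d-1)/|x|\big)=-(d-1)(d-3)/|x|^3$ as $L^1_{loc}$ functions, which is exactly where $d\ge4$ enters (for $d=3$ one would pick up an extra $\delta_0$) — I obtain, with $M(t)=\int u_t P[u]\,dx$ and $|\cancel{\nabla}u|^2:=|\nabla u|^2-|\partial_r u|^2\ge0$,
\[
M(0)-M(T)=\int_0^T\!\!\int\frac{|\cancel{\nabla}u|^2}{|x|}\,dx\,dt+\frac{(d-1)(d-3)}{4}\int_0^T\!\!\int\frac{u^2}{|x|^3}\,dx\,dt+\frac{(d-1)(p-1)}{2(p+1)}\int_0^T\!\!\int\frac{|u|^{p+1}}{|x|}\,dx\,dt.
\]

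Since $d\ge4$ gives $(d-1)(d-3)\ge3>0$ and $p>1$ gives $(d-1)(p-1)/(2(p+1))>0$, all three spacetime integrals on the right are nonnegative; discarding the angular one and dividing by the positive $(d,p)$-dependent constants yields $\int_0^T\int[\,|x|^{-3}u^2+|x|^{-1}|u|^{p+1}\,]\,dx\,dt\lesssim M(0)-M(T)\le|M(0)|+|M(T)|$. Finally I would bound the endpoints by the energy: $|P[u]|\le|\nabla u|+\tfrac{d-1}{2}|x|^{-1}|u|$, and Hardy's inequality ($d\ge3$) gives $\||x|^{-1}u\|_{L^2}\lesssim\|\nabla u\|_{L^2}$, so $|M(t)|\le\|u_t\|_{L^2}\|P[u]\|_{L^2}\lesssim\|u_t\|_{L^2}\|\nabla u\|_{L^2}\lesssim\|\nabla_{t,x}u\|_{L^2}^2$, whence $|M(0)|+|M(T)|\lesssim\sup_{t\in[0,T]}\|\nabla_{t,x}u\|_{L^2}^2$ and $(\ref{5.16})$ follows. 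The one point requiring genuine care is the behaviour at $x=0$ — justifying the two integrations by parts and the $\varepsilon\to0$ passage, equivalently the absence of a Dirac mass in $\Delta^2|x|$ — and this is precisely the step that uses $d\ge4$; everything else (energy conservation along the flow, the vanishing of $\int u_t P_\varepsilon[u_t]\,dx$, and the Hardy bound on $M(t)$) is routine.
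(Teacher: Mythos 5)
Your proof is correct and follows essentially the same route as the paper: both pair $u_t$ with the Morawetz multiplier $\partial_r u + \frac{d-1}{2r}u$, exploit the favorable signs of the resulting mass and potential terms (using $d\geq 4$ so that $\Delta^2|x|$ has no Dirac mass at the origin), and then bound the boundary terms $M(0)$, $M(T)$ by the energy via Hardy's inequality. The only cosmetic difference is that the paper works directly in polar coordinates exploiting radial symmetry, while you work in Cartesian coordinates with a smooth regularization $a_\varepsilon$ of the weight $|x|$ and pass to the limit $\varepsilon\to 0$.
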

\begin{proof}
Let $M(t)$ denote the Morawetz potential for radially symmetric solutions,
\begin{equation}\label{5.17}
M(t) = \int u_{t} u_{r} r^{d - 1} dr + \frac{d - 1}{2} \int u_{t} u r^{d - 2} dr.
\end{equation}
Using Hardy's inequality,
\begin{equation}\label{5.18}
\sup_{0 \leq t \leq T} |M(t)| \lesssim \sup_{t \in [0, T]} \| \nabla u \|_{L^{2}}^{2} + \| u_{t} \|_{L^{2}}^{2}.
\end{equation}
Next, by the product rule,
\begin{equation}\label{5.19}
\frac{d}{dt} M(t) = \int u_{t} u_{rt} r^{d - 1} dr + \frac{d - 1}{2} \int u_{t}^{2} r^{d - 2} dr + \int u_{tt} u_{r} r^{d - 1} dr + \frac{d - 1}{2} \int u_{tt} u r^{d - 2} dr.
\end{equation}
Integrating by parts, 
\begin{equation}\label{5.20}
\int u_{t} u_{rt} r^{d - 1} dr + \frac{d - 1}{2} \int u_{t}^{2} r^{d - 2} dr = 0.
\end{equation}
Next, integrating by parts, since $\Delta u = u_{rr} + \frac{d - 1}{r} u_{r}$,
\begin{equation}\label{5.21}
\aligned
\int \Delta u u_{r} r^{d - 1} dr + \frac{d - 1}{2} \int \Delta u u r^{d - 2} dr = -\frac{(d - 1)(d - 3)}{2} \int  u^{2} r^{d - 4} dr.
\endaligned
\end{equation}
Next, integrating by parts,
\begin{equation}\label{5.22}
\aligned
-\int  |u|^{p - 1} u u_{r} r^{d - 1} - \frac{d - 1}{2} \int |u|^{p + 1} r^{d - 2} dr = -(d - 1)(\frac{1}{2} - \frac{1}{p + 1}) \int |u|^{p + 1} r^{d - 2} dr.
\endaligned
\end{equation}
Since $(\ref{5.21})$ and $(\ref{5.22})$ have the same sign, the proof is complete.
\end{proof}

Finite propagation speed also allows us to cut-off in space, which will be important to the proof of scattering.
\begin{proposition}\label{p5.3}
Suppose $u$ solves $(\ref{1.1})$. For any $T > 0$, if $\chi \in C_{0}^{\infty}(\mathbb{R}^{d})$, $\chi(x) = 1$ for $|x| \leq 1$, $\chi(x) = 0$ for $|x| \geq 2$, then for any $\delta > 0$,
\begin{equation}\label{5.24}
\aligned
\int_{0}^{T} \int \chi(\frac{x}{\delta T}) [\frac{1}{|x|^{3}} u^{2} + \frac{1}{|x|} |u|^{p + 1}] dx dt \lesssim_{\delta} \sup_{t \in [0, T]} \| \nabla_{t, x} u \|_{L^{2}(|x| \leq 2 \delta T)}^{2} + \sup_{t \in [0, T]} \frac{1}{\delta^{2} T^{2}} \| u \|_{L^{2}(|x| \leq 2 \delta T)}^{2}.
\endaligned
\end{equation}
\end{proposition}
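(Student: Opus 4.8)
The plan is to localize the proof of Proposition $\ref{p5.2}$ by inserting the cutoff $\chi(\cdot/(\delta T))$ into the Morawetz potential $(\ref{5.17})$. First I would set
\[
M_{\delta}(t) = \int \chi\Bigl(\tfrac{r}{\delta T}\Bigr)\, u_{t} u_{r}\, r^{d-1}\, dr + \frac{d-1}{2}\int \chi\Bigl(\tfrac{r}{\delta T}\Bigr)\, u_{t} u\, r^{d-2}\, dr .
\]
Since $\chi(r/(\delta T))$ is supported in $\{|x|\le 2\delta T\}$, Cauchy--Schwarz together with the localized Hardy inequality $\|u/|x|\|_{L^{2}(|x|\le 2\delta T)}\lesssim \|\nabla u\|_{L^{2}(|x|\le 2\delta T)}+\tfrac{1}{\delta T}\|u\|_{L^{2}(|x|\le 2\delta T)}$ (available since $d\ge 4$) and Young's inequality show that $\sup_{t\in[0,T]}|M_{\delta}(t)|$ is already bounded by the right-hand side of $(\ref{5.24})$; this will take care of the boundary terms below.

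Next I would differentiate $M_{\delta}$ in $t$ and carry out the same integrations by parts in $r$ as in the proof of Proposition $\ref{p5.2}$, this time keeping every term in which $\partial_{r}$ falls on $\chi(r/(\delta T))$ --- each of which carries a factor $\tfrac{1}{\delta T}\chi'(r/(\delta T))$ supported on the annulus $\{|x|\sim\delta T\}$. The terms in which $\chi$ is undifferentiated reproduce the cancellation $(\ref{5.20})$ and the identities $(\ref{5.21})$, $(\ref{5.22})$ with the cutoff attached, giving
\[
-\frac{d}{dt}M_{\delta}(t) = c_{1}\int \chi\Bigl(\tfrac{r}{\delta T}\Bigr) u^{2} r^{d-4}\, dr + c_{2}\int \chi\Bigl(\tfrac{r}{\delta T}\Bigr) |u|^{p+1} r^{d-2}\, dr - E(t) ,
\]
where $c_{1}=c_{1}(d)>0$ and $c_{2}=c_{2}(d,p)>0$ (here the hypothesis $d\ge 4$ enters), the first two terms being comparable to $\int \chi(x/(\delta T))\,[u^{2}/|x|^{3}+|u|^{p+1}/|x|]\,dx$, and $E(t)$ a finite sum of terms of the form $\tfrac{1}{\delta T}\int \chi'(r/(\delta T))\,g\,dr$ with $g\in\{u_{t}^{2}r^{d-1},\ u_{r}^{2}r^{d-1},\ u u_{r}r^{d-2},\ u^{2}r^{d-3},\ |u|^{p+1}r^{d-1}\}$.

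Integrating over $[0,T]$ then bounds the left-hand side of $(\ref{5.24})$ by $2\sup_{t\in[0,T]}|M_{\delta}(t)| + \int_{0}^{T}|E(t)|\,dt$, so by the first step it remains to estimate $\int_{0}^{T}|E(t)|\,dt$. The term with $g=|u|^{p+1}r^{d-1}$ has a sign that can only help (because $\chi'\le 0$) and may be discarded. For each remaining term, on the support of $\chi'$ one has $\tfrac{1}{\delta T}\sim\tfrac{1}{|x|}$, so Cauchy--Schwarz and Young's inequality bound it pointwise in $t$ by $C_{\delta}\bigl(\tfrac{1}{T}\|\nabla_{t,x}u\|_{L^{2}(|x|\le 2\delta T)}^{2}+\tfrac{1}{T^{3}}\|u\|_{L^{2}(|x|\le 2\delta T)}^{2}\bigr)$; integrating over $[0,T]$ and absorbing the excess powers of $\delta$ into $\lesssim_{\delta}$ gives $(\ref{5.24})$. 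The only step that genuinely requires care is this bookkeeping of $E(t)$ --- verifying that no derivative-of-cutoff term is worse than $\tfrac{1}{\delta T}$ times a quantity localized to $\{|x|\sim\delta T\}$, using the sign of $\chi'$ to kill the nonlinear error, and tracking the $\delta$- and $T$-powers so the estimate closes with the stated $1/T^{2}$ weight --- everything else is a verbatim localization of the computation in Proposition $\ref{p5.2}$.
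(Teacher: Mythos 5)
Your proposal is correct and follows essentially the same route as the paper: insert the cutoff $\chi(\cdot/(\delta T))$ into the Morawetz potential $(\ref{5.17})$, control $\sup_{t}|M_{\delta}(t)|$ via a localized Hardy inequality, redo the integrations by parts of $(\ref{5.20})$--$(\ref{5.22})$ with the cutoff along for the ride, and absorb the commutator terms supported on the annulus $\{|x|\sim\delta T\}$ using $(\ref{5.26})$--$(\ref{5.27})$. Your explicit remark that the $|u|^{p+1}$ error carries the favorable sign of $\chi'\le 0$ makes precise a step that the paper's bound $(\ref{5.28})$ uses silently, and your bookkeeping of the $\delta$- and $T$-powers matches $(\ref{5.28})$ after integrating in $t$ and taking suprema.
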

\begin{proof}
This time use
\begin{equation}\label{5.25}
M(t) = \int \chi(\frac{x}{\delta T}) u_{t} u_{r} r^{d - 1} dr + \frac{d - 1}{2} \int \chi(\frac{x}{\delta T}) u u_{t} r^{d - 2} dr.
\end{equation}
We can use the same computations in $(\ref{5.17})$--$(\ref{5.22})$, only we also have to take into account the fact that when integrating by parts, derivatives can hit $\chi(\frac{x}{\delta T})$. Now then, since
\begin{equation}\label{5.26}
|\nabla^{(k)} \chi(\frac{x}{\delta T})| \lesssim_{k} \frac{1}{\delta^{k} T^{k}}, \qquad \text{for} \qquad k = 1, 2, 3, \qquad \text{and is supported on} \qquad \delta T \leq |x| \leq 2 \delta T.
\end{equation}
Moreover, for any $l \geq 0$,
\begin{equation}\label{5.27}
\frac{1}{r^{l}} |\nabla^{(k)} \chi(\frac{x}{\delta T})| \lesssim_{k, l} \frac{1}{\delta^{k + l} T^{k + l}}, \qquad \text{for} \qquad k = 1, 2, 3, \qquad \text{and is supported on} \qquad \delta T \leq |x| \leq 2 \delta T.
\end{equation}
Therefore, the contribution of the additional terms coming from $\chi(\frac{x}{\delta T})$ is bounded by
\begin{equation}\label{5.28}
\aligned
\frac{1}{\delta T} \int_{0}^{T} \int_{\delta T \leq |x| \leq 2 \delta T} |\nabla_{t,x} u|^{2} dx dt + \frac{1}{\delta^{3} T^{3}} \int_{0}^{T} \int_{\delta T \leq |x| \leq 2 \delta T} |u|^{2} dx dt  \\ \lesssim_{\delta} \sup_{t \in [0, T]} \| \nabla_{t, x} u \|_{L^{2}(|x| \leq 2 \delta T)}^{2} + \sup_{t \in [0, T]} \frac{1}{\delta^{2} T^{2}} \| u \|_{L^{2}(|x| \leq 2 \delta T)}^{2}.
\endaligned
\end{equation}
\end{proof}

Next, we prove a local energy decay estimate.
\begin{proposition}\label{p5.4}
For any $T > 0$, $R > 0$, if $u$ solves $(\ref{1.1})$,
\begin{equation}\label{5.29}
\aligned
R^{-1} \int_{0}^{T} \int_{|x| \leq R} \chi(\frac{x}{\delta T}) [|\nabla u|^{2} + u_{t}^{2} + |u|^{p + 1}] dx dt \lesssim_{\delta} \sup_{t \in [0, T]} \| \nabla_{t, x} u \|_{L^{2}(|x| \leq 2 \delta T)}^{2} \\ + \sup_{t \in [0, T]} \frac{1}{\delta^{2} T^{2}} \| u \|_{L^{2}(|x| \leq 2 \delta T)}^{2} + \sup_{t \in [0, T]} \| u \|_{L^{\frac{2(d + 1)}{d - 1}}(|x| \leq 2 \delta T)}^{\frac{2(d + 1)}{d - 1}}.
\endaligned
\end{equation}
\end{proposition}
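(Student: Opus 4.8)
\noindent The plan is to prove Proposition~\ref{p5.4} by a localized Morawetz multiplier argument, running the computations $(\ref{5.17})$--$(\ref{5.22})$ of Propositions~\ref{p5.2}--\ref{p5.3} with a radial weight that saturates at the scale $R$; this is what turns the scale-invariant weight $|x|^{-1}$ of $(\ref{5.16})$ into the factor $R^{-1}$, and, more importantly, produces \emph{positive} bulk contributions from $u_t^2$ and $|\nabla u|^2$ rather than only from $|x|^{-3}u^2$. Fix a smooth, increasing, concave $a = a(r)$ with $a(r) = r/R$ for $r \le R$ and $a(r) \to c$ (a constant $\sim 1$) as $r \to \infty$; the choice $a(r) = r/(r+R)$ works, and then $0 \le a'(r) \lesssim R^{-1}$, $a(r) \lesssim 1$, $a'(r) \le a(r)/r$ (concavity and $a(0)=0$), and $a'(r), a(r)/r \gtrsim R^{-1}$ on $\{r \le R\}$. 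Put $b(r) = \tfrac{d-1}{2}\,\tfrac{a(r)}{r}$ (so $b(r) \le \tfrac{d-1}{2(|x|+R)}$ is bounded at the origin) and use
\begin{equation}\label{pN}
N(t) = \int \chi(\tfrac{x}{\delta T})\,\bigl( a(r)\, u_t u_r + b(r)\, u\, u_t \bigr)\, r^{d-1}\, dr ,
\end{equation}
i.e.\ the cut-off multiplier $\chi(\tfrac{x}{\delta T})\bigl(a(r) u_r + b(r) u\bigr)$ applied to $(\ref{1.1})$. Differentiating $(\ref{pN})$, substituting $u_{tt} = \Delta u - |u|^{p-1}u$, and integrating by parts as in $(\ref{5.19})$--$(\ref{5.22})$ while carrying $a$, $b$, and the cut-off through, one obtains an identity
\begin{equation}\label{pid}
\frac{d}{dt}N(t) + \int \bigl[\, c_1(r)\, u_t^2 + c_2(r)\, u_r^2 + c_3(r)\, u^2 + c_4(r)\, |u|^{p+1}\,\bigr]\, r^{d-1}\, dr \;=\; \mathcal{R}(t) ,
\end{equation}
where $\mathcal{R}(t)$ collects exactly those terms in which a derivative has landed on $\chi(\tfrac{x}{\delta T})$.

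The heart of the proof is the sign and size of the coefficients $c_i$, and $b = \tfrac{d-1}{2}\tfrac{a}{r}$ is chosen so that the dangerous cross terms telescope: one computes $c_1(r) = c_2(r) = \tfrac12 a'(r) \ge 0$ on $(0,\infty)$, with $c_1, c_2 \gtrsim R^{-1}$ on $\{r \le R\}$, and $c_4(r) \ge \tfrac{a(r)}{r}\bigl(\tfrac{d-1}{2} - \tfrac{d}{p+1}\bigr) \ge 0$ on $(0,\infty)$, the positivity being exactly $p > \tfrac{d+1}{d-1}$, which holds since $p-1 > \tfrac{4}{d-1}$. Thus, discarding the nonnegative $c_4 |u|^{p+1}$ and the positive part of $c_3 u^2$, and restricting the remaining bulk integrand to $\{|x| \le R\}$, the left side of $(\ref{pid})$ dominates $R^{-1}\int_0^T\!\!\int_{|x|\le R}\chi(\tfrac{x}{\delta T})\bigl(u_t^2 + |\nabla u|^2\bigr)\,dx\,dt$. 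The negative part of $c_3$ obeys $|c_3(r)| \lesssim |x|^{-3}$, so when moved to the right it is $\le \int_0^T\!\!\int\chi(\tfrac{x}{\delta T})\,|x|^{-3}u^2$, which by Proposition~\ref{p5.3} is $\lesssim_\delta$ the right side of $(\ref{5.29})$; and the $|u|^{p+1}$ term on the left of $(\ref{5.29})$ is supplied directly by Proposition~\ref{p5.3}, since $R^{-1}\mathbf{1}_{\{|x|\le R\}} \le |x|^{-1}$ there.

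It remains to control $\sup_{[0,T]}|N(t)|$ and $\int_0^T|\mathcal{R}(t)|\,dt$. Since $a \lesssim 1$, the first summand in $(\ref{pN})$ is $\lesssim \|\nabla_{t,x}u\|_{L^2(|x|\lesssim \delta T)}^2$ by Cauchy--Schwarz; for the second, using $b(r) \lesssim \min(|x|^{-1}, R^{-1})$, Cauchy--Schwarz and Hardy's inequality on a ball (the resulting boundary term controlled by the energy on $\{|x| \sim \delta T\}$) give $\lesssim \|\nabla_{t,x}u\|_{L^2(|x|\lesssim\delta T)}^2 + (\delta T)^{-2}\|u\|_{L^2(|x|\lesssim\delta T)}^2$, the first two terms on the right of $(\ref{5.29})$. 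For $\mathcal{R}(t)$: exactly as in $(\ref{5.26})$--$(\ref{5.28})$ it is supported in $\{\delta T \le |x| \le 2\delta T\}$ and bounded pointwise by $(\delta T)^{-1}\!\int_{|x|\sim\delta T}|\nabla_{t,x}u|^2\,dx + (\delta T)^{-3}\!\int_{|x|\sim\delta T}u^2\,dx$ plus a nonlinear contribution which, as in the proof of Proposition~\ref{p5.3}, carries the favorable sign because $\chi' \le 0$; integrating in time and using only $\int_0^T(\cdots)\,dt \le T\sup_{[0,T]}(\cdots)$ converts $(\delta T)^{-1}$ into $\delta^{-1}$ and $(\delta T)^{-3}$ into $\delta^{-1}(\delta T)^{-2}$ times the supremum, which the right side of $(\ref{5.29})$ absorbs at the cost of the $\delta$-dependent constant. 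The last term, $\|u\|_{L^{2(d+1)/(d-1)}(|x|\le 2\delta T)}^{2(d+1)/(d-1)}$, provides additional slack: it is more than enough to absorb the nonlinear pieces of $\mathcal{R}(t)$, or the cross term in $N(t)$, by Hölder's inequality and Sobolev embedding over $\{|x| \le 2\delta T\}$ if one prefers not to track signs.

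The step I expect to be the main obstacle is the coefficient analysis of the second paragraph: one has to choose the saturating weight $a$, and its lower-order companion $b$, so that $c_1, c_2 \gtrsim R^{-1}$ on $\{|x| \le R\}$ while $c_3$, $c_4$, and the boundary functional $N(t)$ --- which is sensitive to the behavior of $b$ near the origin --- all stay controllable by the already-established estimate $(\ref{5.16})$ and by Hardy's inequality; the telescoping identity $c_1 = c_2 = \tfrac12 a'$ and the structural positivity $c_4 \ge \tfrac{a}{r}\bigl(\tfrac{d-1}{2} - \tfrac{d}{p+1}\bigr)$, valid throughout the range $\tfrac{4}{d-1} < p - 1 < \tfrac{4}{d-2}$, are what make the concrete choices above work.
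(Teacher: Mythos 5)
Your proposal follows essentially the same route as the paper: a cut-off Morawetz multiplier $\chi(\tfrac{x}{\delta T})\bigl(a(r)u_r + b(r)u\bigr)$ with a radial weight $a$ saturating at scale $R$ and companion $b = \tfrac{d-1}{2}\tfrac{a}{r}$; the paper's choice is $a(r) = R^{-1}r\,\psi(r/R)$ with $\psi$ as in its proof (so $a' = R^{-1}\phi(r/R) \ge 0$), which differs from your $a(r) = r/(r+R)$ only cosmetically, and the $|x|^{-3}u^2$ error and the $\chi$-localization errors are handled exactly as in the paper via Proposition~\ref{p5.3} and the bounds $(\ref{5.26})$--$(\ref{5.28})$. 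The proposal is correct.
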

\begin{proof}
Define $\psi(r) \in C_{0}^{\infty}(\mathbb{R}^{d})$ and suppose $\psi(r) = 1$ for $0 \leq r \leq 1$, $\psi(r) = \frac{3}{2r}$ for $r > 2$, and $\partial_{r}(r \psi(r)) = \phi(r) \geq 0$ for $r \geq 0$. Now, define the Morawetz potential
\begin{equation}\label{5.30}
M(t) = R^{-1} \int \chi(\frac{r}{\delta T}) \psi(\frac{r}{R}) u_{t} u_{r} r^{d} dr + \frac{d - 1}{2} R^{-1} \int \chi(\frac{r}{\delta T}) \psi(\frac{r}{R}) u_{t} u r^{d - 1} dr.
\end{equation}
As in $(\ref{5.25})$,
\begin{equation}\label{5.31}
\sup_{0 \leq t \leq T} M(t) \lesssim_{\delta} \sup_{t \in [0, T]} \| \nabla_{t, x} u \|_{L^{2}(|x| \leq 2 \delta T)}^{2} + \sup_{t \in [0, T]} \frac{1}{\delta^{2} T^{2}} \| u \|_{L^{2}(|x| \leq 2 \delta T)}^{2}. 
\end{equation}

Next, by direct computation,
\begin{equation}\label{5.32}
\aligned
\frac{d}{dt} M(t) = R^{-1} \int \chi(\frac{r}{\delta T}) \psi(\frac{r}{R}) u_{t} u_{tr} r^{d} dr + \frac{d - 1}{2} R^{-1} \int \chi(\frac{r}{\delta T}) \psi(\frac{r}{R}) u_{t}^{2} r^{d - 1} dr \\
+ R^{-1} \int \chi(\frac{r}{\delta T}) \psi(\frac{r}{R}) u_{tt} u_{r} r^{d} dr + \frac{d - 1}{2} R^{-1} \int \chi(\frac{r}{\delta T}) \psi(\frac{r}{R}) u_{tt} u r^{d - 1} dr.
\endaligned
\end{equation}
Integrating by parts in $r$, by $(\ref{5.27})$,
\begin{equation}\label{5.33}
\aligned
R^{-1} \int \chi(\frac{r}{\delta T}) \psi(\frac{r}{R}) u_{t} u_{tr} r^{d} dr + \frac{d - 1}{2} R^{-1} \int \chi(\frac{r}{\delta T}) \psi(\frac{r}{R}) u_{t}^{2} r^{d - 1} dr \\ \leq -\frac{1}{2} \int \chi(\frac{r}{\delta T}) \phi(\frac{r}{R}) u_{t}^{2} r^{d - 1} dr + O(\frac{1}{\delta T} \int_{\delta T \leq |x| \leq 2 \delta T} u_{t}^{2} dx).
\endaligned
\end{equation}

Next, integrating by parts and using $(\ref{5.27})$,
\begin{equation}\label{5.34}
\aligned
R^{-1} \int \chi(\frac{r}{\delta T}) \psi(\frac{r}{R}) (u_{rr} + \frac{d - 1}{r} u_{r}) u_{r} r^{d} dr + \frac{d - 1}{2} R^{-1} \chi(\frac{r}{\delta T}) \psi(\frac{r}{R}) (u_{rr} + \frac{d - 1}{r} u_{r}) u r^{d - 1} dr \\
\leq -\frac{1}{2} \int \chi(\frac{r}{\delta T}) \phi(\frac{r}{R}) u_{r}^{2} r^{3} dr + O(\frac{1}{\delta T} \int_{\delta T \leq |x| \leq 2 \delta T} |\nabla u|^{2} dx) \\ + O(\frac{1}{R} \int \chi(\frac{r}{\delta T}) \psi(\frac{r}{R}) u^{2} r^{d - 3} dr) + O(\frac{1}{\delta^{3} T^{3}} \int_{\delta T \leq |x| \leq 2 \delta T} u^{2} dx).
\endaligned
\end{equation}
Now then,
\begin{equation}\label{5.35}
\aligned
\frac{1}{\delta T} \int_{0}^{T} \int_{\delta T \leq |x| \leq 2 \delta T} |\nabla u|^{2} dx dt + \frac{1}{\delta^{3} T^{3}} \int_{0}^{T} \int_{\delta T \leq |x| \leq 2 \delta T} u^{2} dx dt \\ \lesssim_{\delta} \sup_{t \in [0, T]} \| \nabla_{t, x} u \|_{L^{2}(|x| \leq 2 \delta T)}^{2} + \sup_{t \in [0, T]} \frac{1}{\delta^{2} T^{2}} \| u \|_{L^{2}(|x| \leq 2 \delta T)}^{2}.
\endaligned
\end{equation}
Also,
\begin{equation}\label{5.36}
\frac{1}{R} \int_{0}^{T} \int \chi(\frac{r}{\delta T}) \psi(\frac{r}{R}) u^{2} r^{d - 3} dr dt \lesssim \int_{0}^{T} \int \chi(\frac{r}{\delta T}) \frac{1}{|x|^{3}} u^{2} dx dt,
\end{equation}
and we can use Proposition $\ref{p5.3}$ to estimate this term.\medskip

Next, integrating by parts,
\begin{equation}\label{5.37}
\aligned
-R^{-1} \int \chi(\frac{r}{\delta T}) \psi(\frac{r}{R})  |u|^{p - 1} u u_{r} r^{d} dr - \frac{d - 1}{2} R^{-1} \int \chi(\frac{r}{\delta T}) \psi(\frac{r}{R}) |u|^{p + 1} r^{d - 1} dr \\
=  -\frac{1}{R}(\frac{d}{p + 1} - \frac{d - 1}{2}) \int \chi(\frac{r}{\delta T}) \psi(\frac{r}{R}) |u|^{p + 1} r^{d - 1} dr \\ + \frac{1}{R^{2}} \int \chi(\frac{r}{\delta T}) \psi'(\frac{r}{R}) |u|^{p + 1} r^{d} + \frac{1}{\delta T} \int_{\delta T \leq |x| \leq 2 \delta T} |u|^{p + 1} dx \\
\leq  -\frac{d - 1}{2R(d + 1)} \int \chi(\frac{r}{\delta T}) \psi(\frac{r}{R}) |u|^{p + 1} r^{d - 1} dr  + O(\frac{1}{\delta T} \int_{\delta T \leq |x| \leq 2 \delta T} |u|^{p + 1} dx).
\endaligned
\end{equation}
The last inequality uses the fact that $\psi'(r) \leq 0$ for all $r$ and the fact that since $p + 1 > \frac{2(d + 1)}{d - 1}$, $(d - 1)(\frac{d}{p + 1} - \frac{d - 1}{2}) \leq -\frac{d - 1}{2(d + 1)}$. This completes the proof of the theorem.
\end{proof}

Now suppose that $v$ solves
\begin{equation}\label{5.23}
v_{tt} - \Delta v + |v|^{p - 1} v = F.
\end{equation}
\begin{proposition}\label{p5.5}
Suppose $v$ solves $(\ref{5.23})$. Then for any $T > 0$, $R > 0$,
\begin{equation}\label{5.38}
\aligned
R^{-1} \int_{0}^{T} \int_{|x| \leq R} \chi(\frac{x}{\delta T}) [|\nabla v|^{2} + v_{t}^{2} + |v|^{p + 1}] dx dt + \int_{0}^{T} \int \chi(\frac{x}{\delta T}) [\frac{1}{|x|^{3}} |v|^{2} + \frac{1}{|x|} |v|^{p + 1}] dx dt \\\lesssim_{\delta} \sup_{t \in [0, T]} \| \nabla_{t, x} v \|_{L^{2}(|x| \leq 2 \delta T)}^{2} + \sup_{t \in [0, T]} \frac{1}{\delta^{2} T^{2}} \| v \|_{L^{2}(|x| \leq 2 \delta T)}^{2} + \sup_{t \in [0, T]} \| v \|_{L^{p + 1}(|x| \leq 2 \delta T)}^{p + 1} \\
+ \int_{0}^{T} \int \chi(\frac{x}{\delta T}) |v_{r}| |F| dx dt + \int_{0}^{T} \int \chi(\frac{x}{\delta T}) |v| |F| \frac{1}{|x|} dx dt.
\endaligned
\end{equation}
\end{proposition}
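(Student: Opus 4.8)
The plan is to rerun the Morawetz computations of Propositions~\ref{p5.2}, \ref{p5.3}, and \ref{p5.4} with $u$ replaced by $v$, carrying along the extra contribution of the forcing term $F$. Take as Morawetz potential a linear combination $M(t) = C\, M_{1}(t) + M_{2}(t)$, where $M_{1}(t)$ is the cut-off potential $(\ref{5.25})$ and $M_{2}(t)$ is the potential $(\ref{5.30})$, both written with $v$ in place of $u$, and $C$ is a large constant to be fixed below. Exactly as in $(\ref{5.31})$, Hardy's inequality gives $\sup_{0 \leq t \leq T} |M(t)| \lesssim_{\delta} \sup_{t \in [0,T]} \| \nabla_{t,x} v \|_{L^{2}(|x| \leq 2\delta T)}^{2} + \sup_{t \in [0,T]} \frac{1}{\delta^{2} T^{2}} \| v \|_{L^{2}(|x| \leq 2\delta T)}^{2}$, which already accounts for the first two terms on the right-hand side of $(\ref{5.38})$.

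Next I would differentiate $M(t)$ in $t$ and substitute $v_{tt} = \Delta v - |v|^{p-1} v + F$ from $(\ref{5.23})$. The contributions of $\Delta v$ and of $|v|^{p-1} v$ are handled verbatim as in $(\ref{5.20})$--$(\ref{5.22})$ and $(\ref{5.33})$--$(\ref{5.37})$: from the $M_{1}$ piece one obtains the good-signed Morawetz density proportional to $\frac{1}{|x|^{3}} |v|^{2} + \frac{1}{|x|} |v|^{p+1}$, and from the $M_{2}$ piece the good-signed local energy density $\chi(\frac{r}{\delta T}) \phi(\frac{r}{R}) [ v_{t}^{2} + v_{r}^{2} ] r^{d-1}$ together with the good-signed $\frac{1}{R} \chi \psi |v|^{p+1} r^{d-1}$ term, plus error terms supported on the annulus $\delta T \leq |x| \leq 2\delta T$ which, after integration over $t \in [0,T]$ and use of $(\ref{5.26})$--$(\ref{5.27})$, are bounded by the first three terms on the right of $(\ref{5.38})$. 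The one intermediate term $\frac{1}{R} \int \chi(\frac{r}{\delta T}) \psi(\frac{r}{R}) v^{2} r^{d-3} \, dr \lesssim \int \chi(\frac{r}{\delta T}) \frac{1}{|x|^{3}} v^{2} \, dx$ of $(\ref{5.36})$ is, this time, absorbed into the good Morawetz density on the left after choosing $C$ large enough. The only genuinely new terms are those hit by $F$: from $M_{1}$ one gets $C [ \int \chi F v_{r} r^{d-1} \, dr + \frac{d-1}{2} \int \chi F v \, r^{d-2} \, dr ]$, and from $M_{2}$ the analogous $R^{-1} \int \chi \psi F v_{r} r^{d} \, dr + \frac{d-1}{2} R^{-1} \int \chi \psi F v \, r^{d-1} \, dr$; using $|\psi(\frac{r}{R})| \lesssim \min(1, \frac{R}{r})$, $r^{d-1} dr \sim dx$, and $r^{d-2} dr \sim \frac{1}{|x|} dx$, each of these is controlled by $\int \chi(\frac{x}{\delta T}) |v_{r}| |F| \, dx + \int \chi(\frac{x}{\delta T}) |v| |F| \frac{1}{|x|} \, dx$.

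Finally, I would integrate the identity for $\frac{d}{dt} M(t)$ over $[0,T]$, use the bound on $\sup_{t} |M(t)|$ from the first paragraph to absorb the boundary terms, and rearrange to place all the good-signed densities on the left; this is precisely $(\ref{5.38})$. I expect the only real work to be the bookkeeping in the integration-by-parts identities: checking, as in Propositions~\ref{p5.3} and \ref{p5.4}, that every term in which a derivative falls on $\chi(\frac{x}{\delta T})$ is supported on $\delta T \leq |x| \leq 2\delta T$ and carries enough powers of $\frac{1}{\delta T}$ to be absorbed into the $\sup_{t}$ quantities, and that the sign of the nonlinear term is unchanged now that $v$, rather than $u$, solves the equation. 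There is nothing genuinely new here beyond the already-established Propositions~\ref{p5.2}--\ref{p5.4}, together with the observation that the forcing term contributes only the last two integrals in $(\ref{5.38})$.
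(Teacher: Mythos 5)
Your proposal is correct and matches the paper's approach, which simply reruns the computations of Propositions~\ref{p5.3} and \ref{p5.4} on the equation~$(\ref{5.23})$ for $v$, carrying the extra forcing term $F$ through the integration-by-parts identities to produce the last two integrals on the right of~$(\ref{5.38})$. Your choice to bundle the two Morawetz potentials into a single combination $C M_{1} + M_{2}$ with $C$ large is just a reorganization of the paper's two-step argument (apply the pure Morawetz bound first, then use it to absorb the intermediate $\frac{1}{R}\int \chi \psi v^{2} r^{d-3}\,dr$ term in the local energy potential), and all the error bookkeeping for derivatives falling on $\chi(\frac{x}{\delta T})$ is handled the same way.
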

\begin{proof}
The proof uses the same computations as in the proof of Propositions $\ref{p5.3}$ and $\ref{p5.4}$ and $(\ref{5.23})$.
\end{proof}

\section{Scattering in the $d = 4$ and $d = 5$ case}
The proofs of global well-posedness and scattering are slightly different in the $d = 4$ and $d = 5$ cases, and the $d > 5$ cases. We start with the $d = 4, 5$ case.
\begin{theorem}\label{t2.1}
If $u$ is a solution to the nonlinear wave equation,
\begin{equation}\label{2.2}
u_{tt} - \Delta u + |u|^{p - 1} u = 0, \qquad u(0,x) = u_{0} \in \dot{H}^{s_{c}}, \qquad u_{t}(0, x) = u_{1} \in \dot{H}^{s_{c} - 1}, \qquad u_{0}, u_{1} \qquad \text{radial},
\end{equation}
$u : \mathbb{R} \times \mathbb{R}^{d} \rightarrow \mathbb{R}$, and $s_{c} = \frac{d}{2} - \frac{2}{p - 1}$, then when $d = 4, 5$, $u$ is a global solution to $(\ref{2.2})$ and scatters, that is
\begin{equation}\label{2.3}
\| u \|_{L_{t}^{\frac{p + 1}{2(1 - s_{c})}} L_{x}^{p + 1}(\mathbb{R} \times \mathbb{R}^{d})} \leq C(u_{0}, u_{1}) < \infty.
\end{equation}
\end{theorem}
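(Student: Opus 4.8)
The plan is to adapt the conformal-energy scheme used above for Besov data, replacing the dispersive control of the small piece inside the light cone -- which is not available at the scaling-critical regularity -- by Morawetz and local-energy-decay estimates. Translate time so the data sits at $t=1$, and, using density of Schwartz functions in $\dot H^{s_c}\times\dot H^{s_c-1}$, write $u_0=v_0+w_0$, $u_1=v_1+w_1$ with $(v_0,v_1)$ Schwartz -- hence of finite conformal energy in the sense of $(\ref{5.3})$ -- and $\|w_0\|_{\dot H^{s_c}}+\|w_1\|_{\dot H^{s_c-1}}\le\epsilon_0(d)$ below the threshold of Theorem $\ref{t4.2}$. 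Let $w$ be the corresponding global scattering solution of $(\ref{1.1})$: Theorem $\ref{t4.2}$ gives $\|w\|_{L_{t,x}^{(d+1)(p-1)/2}}\lesssim\epsilon_0$, Lemma $\ref{l4.4}$ together with Lemma $\ref{l4.5}$ gives $\||x|^{\frac{2}{p-1}-\frac12}w\|_{L_t^2L_x^\infty}\lesssim\epsilon_0$, and the radial Sobolev embedding with $(\ref{1.17})$--$(\ref{1.17.1})$ gives $\||x|\,|w|^{\frac{p-1}{2}}\|_{L_{t,x}^\infty}\lesssim\epsilon_0^{\frac{p-1}{2}}$ and $\|(t\pm|x|)\,|w|^{\frac{p-1}{2}}\|_{L^\infty(\{|x|\ge\delta|t|\})}\lesssim\delta^{-1}\epsilon_0^{\frac{p-1}{2}}$. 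Put $v:=u-w$, so that $v(1)=v_0$, $v_t(1)=v_1$, and
\begin{equation*}
v_{tt}-\Delta v+|v|^{p-1}v=F,\qquad F:=-\bigl(|v+w|^{p-1}(v+w)-|v|^{p-1}v-|w|^{p-1}w\bigr),\qquad |F|\lesssim|w|\,|v|^{p-1}+|w|^{p-1}|v|,
\end{equation*}
the pointwise bound on $F$ using that $2<p<3$ when $d=4,5$. The conformal energy $\mathcal E(t)$ of $v$, defined as in $(\ref{5.15.3})$, then satisfies $\mathcal E(1)<\infty$, and a continuity argument using the finite-energy local theory for $v$ together with the estimate below keeps it finite.

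As before it suffices to prove $\|\mathcal E(t)/t^2\|_{L_t^{1/(2(1-s_c))}([1,\infty))}<\infty$ (and, by time reversal, the analogue on $(-\infty,-1]$): since $\|v(t)\|_{L_x^{p+1}}^{p+1}\lesssim\mathcal E(t)/t^2$ this gives $\|v\|_{L_t^{(p+1)/(2(1-s_c))}L_x^{p+1}([1,\infty)\times\mathbb R^d)}<\infty$; together with $\|w\|_{L_t^{(p+1)/(2(1-s_c))}L_x^{p+1}}\lesssim\epsilon_0$ (from the small-data theory, Theorems $\ref{t4.2}$ and $\ref{t4.3}$ and Lemma $\ref{l4.5}$) and Theorem $\ref{t1.2}$ on $[-1,1]$ this yields $(\ref{5.2.1})$ for $u$, which is $(\ref{2.3})$; Lemma $\ref{l2.2}$ upgrades it to $(\ref{2.11})$, a standard continuity argument using the blow-up criterion $(\ref{1.9})$ gives global existence, and genuine scattering follows from $(\ref{4.5})$ and the Picard iteration of Theorem $\ref{t4.2}$. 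For the energy estimate, the computation behind $(\ref{5.15.4})$ gives $\bigl|\tfrac{d}{dt}\mathcal E(t)\bigr|\lesssim\mathcal E(t)^{1/2}\bigl(\|(t+|x|)F\|_{L^2}+\|(t-|x|)F\|_{L^2}\bigr)$. Fix a small $\delta>0$ and split $F=\chi(\tfrac{x}{\delta t})F+(1-\chi(\tfrac{x}{\delta t}))F=:F_{\mathrm{in}}+F_{\mathrm{out}}$. On $\{|x|\ge\delta t\}$ one has $(t\pm|x|)\,|w|^{\frac{p-1}{2}}\lesssim\delta^{-1}\epsilon_0^{\frac{p-1}{2}}$, and two applications of H\"older's inequality (the conjugate exponent working out to $p+1$ each time) give $\|(t\pm|x|)F_{\mathrm{out}}\|_{L^2}\lesssim\delta^{-1}\epsilon_0^{\frac{p-1}{2}}\bigl(\|w\|_{L^{p+1}}^{\frac{p-1}{2}}\|v\|_{L^{p+1}}+\|w\|_{L^{p+1}}^{\frac{3-p}{2}}\|v\|_{L^{p+1}}^{p-1}\bigr)$. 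This is exactly the structure of the Besov proof, so the Hardy/Fubini manipulation of $(\ref{5.15.6})$--$(\ref{5.15.9})$ (which uses $\frac{p-1}{p+1}<\frac12$, i.e.\ $p<3$) bounds the contribution of $F_{\mathrm{out}}$ to $\int_1^T|\tfrac{d}{d\tau}\mathcal E|\,d\tau$ by $\epsilon_0^{\frac{p-1}{2}}\bigl(1+\|\mathcal E(t)/t^2\|_{L_t^{1/(2(1-s_c))}([1,T])}^{\gamma}\bigr)$ for some $\gamma<1$.

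The interior piece $F_{\mathrm{in}}$, supported in $\{|x|\le 2\delta t\}$ where no dispersive decay of $w$ is available, is the main obstacle. There $t\pm|x|\sim t$, so the contribution of $F_{\mathrm{in}}$ to $\tfrac{d}{dt}\mathcal E$ is estimated by pairing $(t\pm|x|)Lv+(d-1)v$ -- which is $\lesssim\mathcal E^{1/2}$ and, on this region, together with Hardy's inequality and the structure of $(\ref{5.15.3})$ controls $\|\nabla_{t,x}v(t)\|_{L^2(|x|\le 2\delta t)}^2\lesssim_\delta\mathcal E(t)/t^2$ -- against $t\bigl(|w|\,|v|^{p-1}+|w|^{p-1}|v|\bigr)$. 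The factor $|w|$ is absorbed using $|w(t,x)|\lesssim|x|^{\frac12-\frac2{p-1}}\bigl\|\,|x|^{\frac2{p-1}-\frac12}w(t)\bigr\|_{L_x^\infty}$, whose $L_x^\infty$-factor is small \emph{in $L_t^2$} by Lemma $\ref{l4.4}$; the remaining powers of $v$ near the origin are controlled by the Morawetz estimate of Propositions $\ref{p5.3}$ and $\ref{p5.5}$, bounding $\iint\chi(\tfrac{x}{\delta t})\bigl[\,|x|^{-3}|v|^2+|x|^{-1}|v|^{p+1}\,\bigr]$, and by the local energy decay of Propositions $\ref{p5.4}$ and $\ref{p5.5}$, bounding $R^{-1}\iint_{|x|\le R}\chi(\tfrac{x}{\delta t})\bigl[\,|\nabla_{t,x}v|^2+|v|^{p+1}\,\bigr]$ for every dyadic $R\le 2\delta t$; the $R^{-1}$ gain is what handles the outer edge $|x|\sim\delta t$, while the Morawetz weights handle the origin. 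The right-hand sides of these estimates involve only $\sup_t\|\nabla_{t,x}v(t)\|_{L^2(|x|\le 2\delta t)}^2$-type quantities -- hence, by the previous line, $\sup_{[1,T]}\mathcal E(t)$ -- together with the forcing-error terms $\iint\chi(\tfrac{x}{\delta t})\bigl(|v_r|+|x|^{-1}|v|\bigr)|F|$, which are of the same bilinear type and get reabsorbed. Summing over the dyadic scales $|x|\sim 2^k\le 2\delta t$ and using the $L_t^2$-smallness of $w$ (noting the terms linear in $v$ produce powers $\le 1$ of $\sup_{[1,T]}\mathcal E(t)$ and the $|w|\,|v|^{p-1}$ terms produce strictly sub-linear powers), the contribution of $F_{\mathrm{in}}$ is bounded by $\epsilon_0^{\sigma}$ times $\sup_{[1,T]}\mathcal E(t)$ plus harmless lower-order terms, for some $\sigma>0$.

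Finally, run a continuity argument on $[1,T]$ for $\mathcal M(T):=\sup_{1\le t\le T}\mathcal E(t)$, which also dominates $\|\mathcal E(t)/t^2\|_{L_t^{1/(2(1-s_c))}([1,T])}$ since $t^{-1/(1-s_c)}$ is integrable on $[1,\infty)$: from $\mathcal E(t)\le\mathcal E(1)+\int_1^t|\tfrac{d}{d\tau}\mathcal E|\,d\tau$ and the two estimates above one gets $\mathcal M(T)\le C(v_0,v_1)+\epsilon_0^{\sigma}\bigl(1+\mathcal M(T)+\mathcal M(T)^{\gamma}\bigr)$ with $\gamma<1$, and Young's inequality absorbs the harmless powers, so for $\epsilon_0$ small $\mathcal M(T)\le C'(v_0,v_1)$ uniformly in $T$. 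Hence $\|\mathcal E(t)/t^2\|_{L_t^{1/(2(1-s_c))}([1,\infty))}<\infty$ and $(\ref{2.3})$ follows; the argument on $(-\infty,-1]$ is identical. I expect the main work -- and the reason $d=4,5$ is treated apart from $d>5$ -- to be this interior analysis: reconciling the conformal-energy differential inequality (which is pointwise in $t$) with the Morawetz and local-energy estimates (which are integrated over space-time), and locating there the smallness (the $L_t^2$ norm of the weighted $L_x^\infty$ norm of $w$) needed to close the bootstrap for arbitrarily large conformal energy of $v$.
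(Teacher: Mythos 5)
Your proposal is correct and takes essentially the same route as the paper: you split the data into a small-$\dot H^{s_c}$ piece $w$ solving the full equation and a finite-conformal-energy remainder $v$, control $\tfrac{d}{dt}\mathcal E$ outside the light cone by the radial dispersive bound as in the Besov case, and inside the light cone pair the conformal-energy currents against $F$ using the Morawetz and local-energy-decay estimates of Propositions \ref{p5.3}--\ref{p5.5} together with the $L_t^2L_x^\infty$ smallness of $|x|^{\frac{2}{p-1}-\frac12}w$ from Lemma \ref{l4.4}, then close by a Gr\"onwall/continuity argument. The only differences from the paper's proof are cosmetic (Schwartz approximation in place of the $\chi(x/R)P_{\le N}$ truncation, a dyadic summation in place of the single weighted H\"older inequality in $(\ref{2.28})$--$(\ref{2.36})$, and a $\sup_t\mathcal E$ continuity bound in place of the Cauchy--Schwarz closure in $(\ref{2.31})$), and you correctly identify the key quantitative input $(\ref{2.27.1})$, up to the subleading term $t^{2s_c(p-1)/(p+1)}\mathcal E^{2/(p+1)}/t^2$ coming from $(\ref{2.34.1})$ which you elide but which is indeed harmless.
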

\begin{remark}
As in \cite{dodson2023sharp1}, the bound obtained in Theorem $\ref{t2.1}$ does not depend on the $\dot{H}^{s_{c}} \times \dot{H}^{s_{c} - 1}$ norm, but also on the profile of the initial data. As usual, such as in \cite{dodson2023sharp1}, we can obtain a bound on the right hand side that depends on the $\| u_{0} \|_{\dot{H}^{s_{c}}} + \| u_{1} \|_{\dot{H}^{s_{c} - 1}}$ norm only by using the profile decomposition.

Also, using the approximation argument in \cite{dodson2023sharp1}, it is enough to approximate $u_{0}$ and $u_{1}$ with Schwarz functions $u_{0}^{\sigma}$ and $u_{1}^{\sigma}$, where $u_{0}^{\sigma} \rightarrow u_{0}$ in $\dot{H}^{s_{c}}$ and $u_{1}^{\sigma} \rightarrow u_{1}$ in $\dot{H}^{s_{c} - 1}$ as $\sigma \searrow 0$. We know from \cite{strauss1968decay} that $(\ref{2.3})$ holds for any $\sigma > 0$, so it only remains to show that the right hand side is uniform in $\sigma > 0$.

Finally, again following \cite{dodson2023sharp1}, time reversal symmetry implies that it is enough to prove $(\ref{2.3})$ forward in time, on $[0, \infty)$. To avoid complications from small $t$, we further shift the initial data to $t = 1$ and solve forward in time.
\end{remark}

\begin{remark}
As a consequence of the proof, we show that a solution $u$ to $(\ref{1.1})$ has the form $u = v + w$, where $w$ is a small data solution to $(\ref{1.1})$ and for all $t \geq 1$,
\begin{equation}
\| \nabla_{t,x} v \|_{L_{x}^{2}(|x| \leq \frac{t}{2})}^{2} + \| v \|_{L_{x}^{p + 1}}^{p + 1} \lesssim_{u_{0}, u_{1}} t^{-2(1 - s_{c})}.
\end{equation}
Thus, we have good decay of the energy inside the light cone.
\end{remark}
\begin{proof}
Again as in  \cite{dodson2023sharp1}, let $\chi \in C_{0}^{\infty}(\mathbb{R}^{4})$ be a smooth, cutoff function, $\chi(x) = 1$ for $|x| \leq 1$ and $\chi(x) = 0$ for $|x| \geq 2$. Then, split the initial data,
\begin{equation}\label{2.4}
u_{0} = v_{0} + w_{0}, \qquad u_{1} = v_{1} + w_{1},
\end{equation}
where
\begin{equation}\label{2.5}
v_{0} = \chi(\frac{x}{R}) P_{\leq N} u_{0}, \qquad v_{1} = \chi(\frac{x}{R}) P_{\leq N} u_{1},
\end{equation}
for some $0 < R < \infty$ and $0 < N < \infty$. Here $P_{\leq N}$ is the standard Littlewood--Paley projection to frequencies $\leq N$. By the dominated convergence theorem, there exists some $N < \infty$ such that
\begin{equation}\label{2.6}
\| P_{> N} u_{0} \|_{\dot{H}^{s_{c}}} + \| P_{> N} u_{1} \|_{\dot{H}^{s_{c} - 1}} \leq \frac{\epsilon}{2}.
\end{equation}
It is convenient to rescale the initial data so that $N = 1$.\medskip

Decompose $(\ref{2.2})$ into a system of equations,
\begin{equation}\label{2.15}
\aligned
w_{tt} - \Delta w + |w|^{p - 1} w &= 0, \qquad w(0, x) = w_{0}, \qquad w_{t}(0, x) = w_{1}, \\
v_{tt} - \Delta v + F &= 0, \qquad F = |u|^{p - 1} u - |w|^{p - 1} w, \qquad v(0, x) = v_{0}, \qquad v_{t}(0, x) = v_{1}.
\endaligned
\end{equation}
By $(\ref{2.6})$, $(\ref{4.5})$, Theorems $\ref{t4.2}$, $\ref{t4.3}$ and Lemmas $\ref{l4.4}$ and $\ref{l4.5}$,
\begin{equation}\label{2.16.1}
\aligned
\| w \|_{L_{t}^{2} L_{x}^{r}(\mathbb{R} \times \mathbb{R}^{d})} + \| w \|_{L_{t,x}^{q}(\mathbb{R} \times \mathbb{R}^{d})} + \| |x|^{\frac{2}{p - 1}} w \|_{L_{t,x}^{\infty} (\mathbb{R} \times \mathbb{R}^{d})} + \| w \|_{L_{t}^{\infty} \dot{H}^{s_{c}}(\mathbb{R} \times \mathbb{R}^{d})} \lesssim \epsilon, \\ \qquad \text{where} \qquad \frac{1}{q} = \frac{2}{(d + 1)(p - 1)}, \qquad \frac{1}{r} = \frac{1}{d} (\frac{2}{p - 1} - \frac{1}{2}).
\endaligned
\end{equation}

Since $1 < p - 1 < 2$ when $d = 4, 5$,
\begin{equation}\label{2.17}
||u|^{p - 1} u - |w|^{p - 1} w - |v|^{p - 1} v| \lesssim |w| |v| (|v|^{p - 2} + |w|^{p - 2}).
\end{equation}
By the radial Sobolev embedding theorem,
\begin{equation}\label{2.19}
\| |x| (|u|^{p - 1} u - |v|^{p - 1} v - |w|^{p - 1} w) \|_{L_{x}^{2}} \lesssim \| |x|^{\frac{2}{p - 1}} w \|_{L_{x}^{\infty}}^{\frac{p - 1}{2}} \| w \|_{L_{x}^{p + 1}}^{\frac{3 - p}{2}} \| v \|_{L_{x}^{p + 1}}^{p - 1} + \| |x|^{\frac{2}{p - 1}} w \|_{L_{x}^{\infty}}^{\frac{p - 1}{2}} \| w \|_{L_{x}^{p + 1}}^{\frac{p - 1}{2}} \| v \|_{L_{x}^{p + 1}},
\end{equation}
and
\begin{equation}\label{2.20}
\| |t|^{\frac{2}{p - 1}} |w| \|_{L_{x}^{\infty}(|x| > \delta |t|)}^{\frac{p - 1}{2}} \lesssim \frac{1}{\delta} \| |x|^{\frac{2}{p - 1}} w \|_{L_{x}^{\infty}}^{\frac{p - 1}{2}},
\end{equation}
so
\begin{equation}\label{2.21}
\aligned
\| |t| ((|u|^{p - 1} u - |v|^{p - 1} v - |w|^{p - 1} w) \|_{L_{x}^{2}(|x| > \delta |t|)} \\ \lesssim \frac{1}{\delta} \| |x|^{\frac{2}{p - 1}} w \|_{L_{x}^{\infty}}^{\frac{p - 1}{2}} \| w \|_{L_{x}^{p + 1}}^{\frac{3 - p}{2}} \| v \|_{L_{x}^{p + 1}}^{p - 1} + \frac{1}{\delta} \| |x|^{\frac{2}{p - 1}} w \|_{L_{x}^{\infty}}^{\frac{p - 1}{2}} \| w \|_{L_{x}^{p + 1}}^{\frac{p - 1}{2}} \| v \|_{L_{x}^{p + 1}}.
\endaligned
\end{equation}
Since $\| v \|_{L_{x}^{p + 1}}^{p + 1} \lesssim \frac{\mathcal E(t)}{t^{2}}$, by $(\ref{2.16.1})$,
\begin{equation}\label{2.22}
\aligned
\frac{d}{dt} \mathcal E(t) = -t \int_{|x| \leq \delta |t|} [ (t + |x|) Lv + (d - 1)v ] (|u|^{p - 1} u - |w|^{p - 1} w - |v|^{p - 1} v) dx \\ - t \int_{|x| \leq \delta |t|} [(t - |x|) \underline{L}v + (d - 1)v] (|u|^{p - 1} u - |w|^{p - 1} w - |v|^{p - 1} v) dx \\ + \frac{\epsilon^{\frac{p - 1}{2}}}{\delta} \frac{\mathcal E(t)^{\frac{1}{2} + \frac{p - 1}{p + 1}}}{t^{\frac{2(p - 1)}{p + 1}}} \| w \|_{L_{x}^{p + 1}}^{\frac{3 - p}{2}} + \frac{\epsilon^{\frac{p - 1}{2}}}{\delta} \frac{\mathcal E(t)^{\frac{1}{2} + \frac{1}{p + 1}}}{t^{\frac{2}{p + 1}}} \| w \|_{L_{x}^{p + 1}}^{\frac{p - 1}{2}}.
\endaligned
\end{equation}

Following \cite{dodson2023sharp}, by the fundamental theorem of calculus, for $t > \frac{1}{\delta^{1/2}}$,
\begin{equation}\label{2.23}
\mathcal E(t) = \mathcal E(\delta^{1/2} t) + \int_{\delta^{1/2} t}^{t} \frac{d}{d \tau} \mathcal E(\tau) d\tau,
\end{equation}
and for $1 < t < \frac{1}{\delta^{1/2}}$,
\begin{equation}\label{2.23.1}
\mathcal E(t) = \mathcal E(1) + \int_{1}^{t} \frac{d}{d \tau} \mathcal E(\tau) d\tau.
\end{equation}
Now, by a change of variables,
\begin{equation}\label{2.24}
\aligned
\int_{\delta^{-1/2}}^{\infty} \frac{\mathcal E(\delta^{1/2} t)^{\frac{1}{2(1 - s_{c})}}}{t^{\frac{1}{1 - s_{c}}}} dt = \delta^{-\frac{1}{2} + \frac{1}{2(1 - s_{c})}} \int_{\delta^{-1/2}}^{\infty} \frac{1}{\delta^{\frac{1}{2(1 - s_{c})}} t^{\frac{1}{1 - s_{c}}}} \mathcal E(\delta^{1/2} t)^{\frac{1}{2(1 - s_{c})}} \delta^{1/2} dt \\ = \delta^{-\frac{1}{2} + \frac{1}{2(1 - s_{c})}} \int_{1}^{\infty} \frac{\mathcal E(t)^{\frac{1}{2(1 - s_{c})}}}{t^{\frac{1}{1 - s_{c}}}} dt,
\endaligned
\end{equation}
and
\begin{equation}\label{2.24.1}
\int_{1}^{\delta^{-1/2}} \frac{\mathcal E(1)^{\frac{1}{2(1 - s_{c})}}}{t^{\frac{1}{1 - s_{c}}}} dt \lesssim \mathcal E(1).
\end{equation}
Next, by $(\ref{5.15.7})$ and $(\ref{5.15.8})$, letting $t' = \sup \{ 1, \delta^{1/2} t \}$ to simplify notation,
\begin{equation}\label{2.25}
\aligned
\int_{1}^{\infty} \frac{1}{t^{\frac{1}{1 - s_{c}}}} [\int_{t'}^{t} \frac{\epsilon^{\frac{p - 1}{2}}}{\delta} \frac{\mathcal E(\tau)^{\frac{1}{2} + \frac{p - 1}{p + 1}}}{\tau^{\frac{2(p - 1)}{p + 1}}} \| w(\tau) \|_{L_{x}^{p + 1}}^{\frac{3 - p}{2}} d\tau + \int_{t'}^{t} \frac{\epsilon^{\frac{p - 1}{2}}}{\delta} \frac{\mathcal E(\tau)^{\frac{1}{2} + \frac{1}{p + 1}}}{\tau^{\frac{2}{p + 1}}} \| w(\tau) \|_{L_{x}^{p + 1}}^{\frac{p - 1}{2}} d\tau]^{\frac{1}{2(1 - s_{c})}} dt \\
\lesssim \frac{\epsilon^{\frac{p - 1}{2} \cdot \frac{1}{2(1 - s_{c})}}}{\delta^{\frac{1}{2(1 - s_{c})}}} \cdot  (\int_{1}^{\infty} \frac{\mathcal E(t)^{\frac{1}{2(1 - s_{c})}}}{t^{\frac{1}{1 - s_{c}}}} dt)^{\frac{1}{2} + \frac{p - 1}{p + 1}} (\int_{1}^{\infty} \| w(t) \|_{L_{x}^{p + 1}}^{\frac{p + 1}{2(1 - s_{c})}})^{\frac{3 - p}{2(p + 1)}} \\
+ \frac{\epsilon^{\frac{p - 1}{2} \cdot \frac{1}{2(1 - s_{c})}}}{\delta^{\frac{1}{2(1 - s_{c})}}} \cdot  (\int_{1}^{\infty} \frac{\mathcal E(t)^{\frac{1}{2(1 - s_{c})}}}{t^{\frac{1}{1 - s_{c}}}} dt)^{\frac{1}{2} + \frac{1}{p + 1}} (\int_{1}^{\infty} \| w(t) \|_{L_{x}^{p + 1}}^{\frac{p + 1}{2(1 - s_{c})}})^{\frac{p - 1}{2(p + 1)}}.
\endaligned
\end{equation}
By $(\ref{2.16.1})$, the right hand side of $(\ref{2.25})$ can be absorbed into
\begin{equation}
\int_{1}^{\infty} \frac{\mathcal E(t)^{\frac{1}{2(1 - s_{c})}}}{t^{\frac{1}{1 - s_{c}}}} dt.
\end{equation}
Therefore, if we could ignore the contribution of $-t \int_{|x| \leq \delta |t|} [ (t + |x|) Lv + (d - 1)v ] (|u|^{p - 1} u - |w|^{p - 1} w - |v|^{p - 1} v) dx$ and $- t \int_{|x| \leq \delta |t|} [(t - |x|) \underline{L}v + (d - 1)v] (|u|^{p - 1} u - |w|^{p - 1} w - |v|^{p - 1} v) dx$, the proof of Theorem $\ref{t2.1}$ would be complete.\medskip

The terms with $(t + |x|) Lv$ and $(t - |x|) \underline{L} v$ may be handled in exactly the same way, so using $(\ref{2.17})$ and $(\ref{2.22})$, it remains to compute
\begin{equation}\label{2.26}
 \int_{t'}^{t} \int_{|x| \leq \delta |t|} \tau [(\tau + |x|) Lv + (d - 1)v] |w| |v|^{p - 1} dx d\tau, \qquad \text{and} \qquad \int_{t'}^{t} \int_{|x| \leq \delta |t|} \tau [(\tau + |x|) Lv + (d - 1)v] |w|^{p - 1} |v| dx d\tau,
\end{equation}
separately.\medskip

It is convenient to replace $\tilde{\mathcal E}(t)$ by
\begin{equation}\label{2.27}
\tilde{\mathcal E}(t) = \sup_{0 \leq s \leq t} \mathcal E(t).
\end{equation}
Note that $\frac{d}{dt} \tilde{\mathcal E}(t)$ is bounded by the right hand side of the absolute value of $(\ref{2.22})$. We abuse notation and let $\mathcal E(t)$ denote $\tilde{\mathcal E}(t)$.\medskip 

Suppose for a moment
\begin{equation}\label{2.27.1}
\| \frac{1}{|x|^{3/2}} v \|_{L_{\tau, x}^{2}([t', t] \times |x| \leq \delta t)}^{2} \lesssim_{\delta} (\frac{\mathcal E(t)}{t^{2}} + \frac{\mathcal E(t)^{\frac{2}{p + 1}} t^{2s_{c} \cdot \frac{p - 1}{p + 1}}}{t^{2}}).
\end{equation}
By H{\"o}lder's inequality,
\begin{equation}\label{2.28}
\aligned
t \int_{t'}^{t} \int_{|x| \leq \mathcal \delta |t|} [(\tau + |x|) Lv + (d - 1)v] |w| |v|^{p - 1} dx d\tau \\ 
\lesssim t \| [(\tau + |x|) Lv + (d - 1)v \|_{L_{\tau}^{\infty} L_{x}^{2}} \| |x|^{\frac{2}{p - 1} - \frac{1}{2}} w \|_{L_{\tau}^{2} L_{x}^{\infty}} \| \frac{1}{|x|^{3/2}} v \|_{L_{\tau, x}^{2}(|x| \leq \mathcal \delta t)}^{\frac{3 - p}{p - 1}}  \| \frac{1}{|x|^{\frac{1}{p + 1}}} v \|_{L_{t,x}^{p + 1}(|x| \leq \mathcal \delta t)}^{p - 1 - \frac{3 - p}{p - 1}} \\
\lesssim_{\delta} t \mathcal E(t)^{1/2} \| |x|^{\frac{2}{p - 1} - \frac{1}{2}} w \|_{L_{\tau}^{2} L_{x}^{\infty}} (\frac{\mathcal E(t)}{t^{2}} + \frac{\mathcal E(t)^{\frac{2}{p + 1}} t^{2 s_{c} \cdot \frac{p - 1}{p + 1}}}{t^{2}})^{1/2}.
\endaligned
\end{equation}
Turning to the second term in $(\ref{2.26})$,
\begin{equation}\label{2.36}
\aligned
t \int_{t'}^{t} \int_{|x| \leq \mathcal \delta |t|} [(\tau + |x|) Lv + (d - 1)v] |w|^{p - 1} |v| dx d\tau \\ 
\lesssim t \| [(\tau + |x|) Lv + (d - 1)v \|_{L_{\tau}^{\infty} L_{x}^{2}} \| |x|^{\frac{2}{p - 1} - \frac{1}{2}} w \|_{L_{\tau}^{2} L_{x}^{\infty}} \| \frac{1}{|x|^{3/2}} v \|_{L_{\tau, x}^{2}(|x| \leq \mathcal \delta t)}  \| |x|^{\frac{2}{p - 1}} w \|_{L_{t,x}^{\infty}}^{p - 2} \\
\lesssim_{\delta} \epsilon^{p - 2} t \mathcal E(t)^{1/2} \| |x|^{\frac{2}{p - 1} - \frac{1}{2}} w \|_{L_{\tau}^{2} L_{x}^{\infty}} (\frac{\mathcal E(t)}{t^{2}} + \frac{\mathcal E(t)^{\frac{2}{p + 1}} t^{2 s_{c} \cdot \frac{p - 1}{p + 1}}}{t^{2}})^{1/2}.
\endaligned
\end{equation}

\begin{remark}
All time intervals are $[t', t]$ where $t' = \sup \{ 1, \delta^{1/2} t \}$. The last estimate will be proved later using Proposition $\ref{p5.5}$.
\end{remark}
Now then, since $\| |x|^{\frac{2}{p - 1} - \frac{1}{2}} w \|_{L_{t}^{2} L_{x}^{\infty}} \lesssim \epsilon \ll_{\delta} 1$,
\begin{equation}\label{2.29}
t \mathcal E(t)^{1/2} \frac{\mathcal E(t)^{1/2}}{t} \epsilon \ll \mathcal E(t),
\end{equation}
Therefore, plugging $(\ref{2.28})$ into $(\ref{2.22})$ and using $(\ref{2.24})$, $(\ref{2.24.1})$, and $(\ref{2.25})$,
\begin{equation}\label{2.30}
\aligned
\int_{1}^{\infty} \frac{1}{t^{\frac{1}{1 - s_{c}}}} \mathcal E(t)^{\frac{1}{2(1 - s_{c})}} dt \lesssim \mathcal E(1)^{\frac{1}{2(1 - s_{c})}} + 1 \\ + \int_{1}^{\infty} \frac{1}{t^{\frac{1}{1 - s_{c}}}} \mathcal E(t)^{(\frac{1}{2} + \frac{1}{p + 1}) \cdot \frac{1}{2(1 - s_{c})}} t^{2 s_{c}(\frac{1}{2} - \frac{1}{p + 1}) \cdot \frac{1}{2(1 - s_{c})}} \| |x|^{\frac{2}{p - 1} - \frac{1}{2}} w \|_{L_{t}^{2} L_{x}^{\infty}([t', t] \times \mathbb{R}^{d})}^{\frac{1}{2(1 - s_{c})}} dt.
\endaligned
\end{equation}
Then by the Cauchy--Schwarz inequality,
\begin{equation}\label{2.31}
\aligned
\int_{1}^{\infty} \frac{1}{t^{\frac{1}{1 - s_{c}}}} \mathcal E(t)^{(\frac{1}{2} + \frac{1}{p + 1}) \cdot \frac{1}{2(1 - s_{c})}} t^{2 s_{c}(\frac{1}{2} - \frac{1}{p + 1}) \cdot \frac{1}{2(1 - s_{c})}} \| |x|^{\frac{2}{p - 1} - \frac{1}{2}} w \|_{L_{t}^{2} L_{x}^{\infty}([t', t] \times \mathbb{R}^{d})}^{\frac{1}{2(1 - s_{c})}} dt \\
\lesssim (\int_{1}^{\infty} \frac{\mathcal E(t)^{\frac{1}{2(1 - s_{c})}}}{t^{\frac{1}{1 - s_{c}}}} dt)^{\frac{1}{2} + \frac{1}{p + 1}} (\int_{1}^{\infty} t^{\frac{s_{c}}{1 - s_{c}}} t^{-\frac{1}{1 - s_{c}}} \| |x|^{\frac{2}{p - 1} - \frac{1}{2}} w \|_{L_{t}^{2} L_{x}^{\infty}([t', t] \times \mathbb{R}^{d})}^{\frac{2(p + 1)}{p - 1}})^{\frac{1}{2} - \frac{1}{p + 1}} \\
\lesssim (\int_{1}^{\infty} \frac{\mathcal E(t)^{\frac{1}{2(1 - s_{c})}}}{t^{\frac{1}{1 - s_{c}}}} dt)^{\frac{1}{2} + \frac{1}{p + 1}} (\int_{1}^{\infty} \frac{1}{t} \| |x|^{\frac{2}{p - 1} - \frac{1}{2}} w \|_{L_{t}^{2} L_{x}^{\infty}([t', t] \times \mathbb{R}^{d})}^{\frac{2(p + 1)}{p - 1}} dt)^{\frac{1}{2} - \frac{1}{p + 1}} \\
\lesssim (\int_{1}^{\infty} \frac{\mathcal E(t)^{\frac{1}{2(1 - s_{c})}}}{t^{\frac{1}{1 - s_{c}}}} dt)^{\frac{1}{2} + \frac{1}{p + 1}} \| |x|^{\frac{2}{p - 1} - \frac{1}{2}} w \|_{L_{t}^{2} L_{x}^{\infty}(\mathbb{R} \times \mathbb{R}^{d})}.
\endaligned
\end{equation}
This proves
\begin{equation}\label{2.32}
\int_{1}^{\infty} \frac{\mathcal E(t)^{\frac{1}{2(1 - s_{c})}}}{t^{\frac{1}{1 - s_{c}}}} dt \lesssim 1 + \mathcal E(1)^{\frac{1}{2(1 - s_{c})}}.
\end{equation}
\end{proof}

It only remains to prove $(\ref{2.27.1})$.
\begin{proposition}\label{p2.2}
For any $T > 1$, $T' = \sup \{ 1, \delta^{1/2} T \}$,
\begin{equation}\label{2.37}
\aligned
\int_{T'}^{T} \int \chi(\frac{x}{\delta T}) [\frac{1}{|x|^{3}} v^{2} + \frac{1}{|x|} |v|^{p + 1}] dx dt \lesssim_{\delta} \frac{\mathcal E(T)}{T^{2}} + \frac{T^{2 s_{c} \cdot \frac{p - 1}{p + 1}} \mathcal E(T)^{\frac{2}{p + 1}}}{T^{2}}.
\endaligned
\end{equation}
\end{proposition}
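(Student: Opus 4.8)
The plan is to apply Proposition \ref{p5.5} to $v$ and then estimate the terms on its right-hand side one by one. By $(\ref{2.15})$, $v$ solves $(\ref{5.23})$ with $F=-\bigl(|u|^{p-1}u-|w|^{p-1}w-|v|^{p-1}v\bigr)$; since the right-hand side of Proposition \ref{p5.5} does not depend on $R$, the same argument run on $[T',T]$ in place of $[0,T]$ (the Morawetz potential is controlled at the endpoints $T'$ and $T$) gives, writing $X(T)$ for the left side of $(\ref{2.37})$,
\begin{equation*}
X(T)\lesssim_{\delta}\sup_{t\in[T',T]}\|\nabla_{t,x}v\|_{L^{2}(|x|\le 2\delta T)}^{2}+\sup_{t}\tfrac{1}{\delta^{2}T^{2}}\|v\|_{L^{2}(|x|\le 2\delta T)}^{2}+\sup_{t}\|v\|_{L^{p+1}(|x|\le 2\delta T)}^{p+1}+\int_{T'}^{T}\int\chi(\tfrac{x}{\delta T})\bigl(|v_{r}|+\tfrac{|v|}{|x|}\bigr)|F|\,dx\,dt.
\end{equation*}

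First I would dispose of the three ``linear'' terms using the conformal energy. On the domain of integration $|x|\le 2\delta T$ and $t\ge T'\gtrsim\delta^{1/2}T$, so $t^{2}+|x|^{2}\sim t^{2}\sim_{\delta}T^{2}$ and $|t\pm|x||\sim t$; discarding the $(d-1)v$ cross-terms in $(\ref{5.15.3})$ via $|a+b|^{2}\ge\frac12|a|^{2}-|b|^{2}$ shows $t^{2}\bigl(\|\nabla_{t,x}v(t)\|_{L^{2}(|x|\le 2\delta T)}^{2}+\|v(t)\|_{L^{p+1}(|x|\le 2\delta T)}^{p+1}\bigr)\lesssim\mathcal E(t)+\|v(t)\|_{L^{2}(|x|\le 2\delta T)}^{2}$. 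Using $\mathcal E(t)\le\mathcal E(T)$ (after $(\ref{2.27})$), H\"older's inequality, and $\|v(t)\|_{L^{p+1}}^{p+1}\lesssim\mathcal E(t)/t^{2}$, one gets $\|v(t)\|_{L^{2}(|x|\le 2\delta T)}^{2}\lesssim(\delta T)^{d\frac{p-1}{p+1}}\|v(t)\|_{L^{p+1}}^{2}\lesssim_{\delta}T^{\frac{d(p-1)-4}{p+1}}\mathcal E(T)^{\frac{2}{p+1}}$, and the identity $s_{c}=\frac d2-\frac2{p-1}$ gives $\frac{d(p-1)-4}{p+1}=2s_{c}\frac{p-1}{p+1}$. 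Hence all three terms are $\lesssim_{\delta}\frac{\mathcal E(T)}{T^{2}}+\frac{T^{2s_{c}(p-1)/(p+1)}\mathcal E(T)^{2/(p+1)}}{T^{2}}$, the claimed bound.

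It then remains to absorb the forcing term. By $(\ref{2.17})$, $|F|\lesssim|w||v|^{p-1}+|w|^{p-1}|v|$, and by the radial Sobolev embedding with $(\ref{2.16.1})$ and by Lemma \ref{l4.4} we have $\||x|^{2/(p-1)}w\|_{L^{\infty}_{t,x}}\lesssim\epsilon$ and $\||x|^{2/(p-1)-1/2}w\|_{L^{2}_{t}L^{\infty}_{x}}\lesssim\epsilon$. For the two pieces carrying the weight $|v|/|x|$, inserting $|w|\lesssim\epsilon|x|^{-2/(p-1)}$ (resp.\ $|w|^{p-1}\lesssim\epsilon^{p-1}|x|^{-2}$) bounds them by $\epsilon$ times $\int\int\chi\frac{|v|^{p}}{|x|^{1+2/(p-1)}}+\int\int\chi\frac{|v|^{2}}{|x|^{3}}$, and the pointwise factorization $\frac{|v|^{p}}{|x|^{1+2/(p-1)}}=\bigl(\frac{|v|^{2}}{|x|^{3}}\bigr)^{\frac{1}{p-1}}\bigl(\frac{|v|^{p+1}}{|x|}\bigr)^{\frac{p-2}{p-1}}$ together with H\"older gives $\int\int\chi\frac{|v|^{p}}{|x|^{1+2/(p-1)}}\lesssim X(T)$; so both are $\lesssim\epsilon X(T)$ and absorbed for $\epsilon$ small. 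For the piece $\int\int\chi|v_{r}||w|^{p-1}|v|$ I would distribute the decay of $w$, $|w|^{p-1}\lesssim\epsilon^{p-2}|x|^{-2\frac{p-2}{p-1}}\cdot|x|^{-(2/(p-1)-1/2)}h(t)=\epsilon^{p-2}h(t)|x|^{-3/2}$ with $\|h\|_{L^{2}_{t}}\lesssim\epsilon$, and then Cauchy--Schwarz in $x$ and then in $t$, together with the conformal energy bound $\|\partial_{r}v(t)\|_{L^{2}(|x|\le 2\delta T)}\lesssim\mathcal E(t)^{1/2}/t\lesssim_{\delta}\mathcal E(T)^{1/2}/T$, yields $\lesssim\epsilon^{p-1}\frac{\mathcal E(T)^{1/2}}{T}X(T)^{1/2}\le\frac14X(T)+C\frac{\mathcal E(T)}{T^{2}}$ by Young's inequality.

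The one genuinely delicate piece — and the step I expect to be the main obstacle — is $\int\int\chi|v_{r}||w||v|^{p-1}$: here only a single factor of $w$ is available to generate the negative powers of $|x|$ needed to recognize the remaining powers of $v$ as a combination of the Morawetz densities $\frac{|v|^{2}}{|x|^{3}}$ and $\frac{|v|^{p+1}}{|x|}$, while $|v_{r}|$ is controlled only in $L^{2}_{t,x}$ (with a loss of one power of $t$) through the conformal energy, or through the local energy density $R^{-1}\int\int_{|x|\le R}\chi|v_{r}|^{2}$ of Proposition \ref{p5.5}. One natural approach is to split $|x|$ and the time interval dyadically: on $|x|\sim r$ use $|w|\lesssim\epsilon r^{-2/(p-1)}$, the bound $\int_{T'}^{T}\int_{|x|\le r}\chi|v_{r}|^{2}\lesssim r\cdot(\text{right side of }(\ref{2.37}))$ from the local energy estimate, and $\int_{|x|\le r}|v(t)|^{2(p-1)}\lesssim r^{d\frac{3-p}{p+1}}\bigl(\mathcal E(t)/t^{2}\bigr)^{\frac{2(p-1)}{p+1}}$ obtained from H\"older and $\|v(t)\|_{L^{p+1}}^{p+1}\lesssim\mathcal E(t)/t^{2}$ (with the radial Sobolev embedding used where $|x|\gtrsim t$), and then sum. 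The hard part will be to arrange the exponents so that the sum over dyadic scales converges and the residual is again $\lesssim\epsilon X(T)+\frac{\mathcal E(T)}{T^{2}}(1+\cdots)$; this is exactly where the radial symmetry and the parameter range $\frac4{d-1}<p-1<\frac4{d-2}$, $d\ge4$, should be essential.
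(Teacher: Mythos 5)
Your plan matches the paper's proof almost line for line: apply Proposition~\ref{p5.5}, bound the boundary/supremum terms via the conformal energy together with H\"older and $s_c=\frac d2-\frac2{p-1}$ (the paper's $(\ref{2.33})$--$(\ref{2.34.1})$), and then absorb the two pieces of the forcing term carrying the weight $|v|/|x|$ and the piece $\int\!\int\chi|v_r||w|^{p-1}|v|$ exactly as you do (the paper's $(\ref{2.38})$ organizes the $|v|/|x|$ piece by Young's inequality rather than your pointwise factorization, but these are interchangeable).

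The piece you flag as the ``main obstacle,'' $\int\!\int\chi|v_r||w||v|^{p-1}$, is in fact not an obstacle, and your own device from the preceding paragraph already closes it. Write $|w|\le h(t)|x|^{-(2/(p-1)-1/2)}$ with $h(t)=\||x|^{2/(p-1)-1/2}w(t)\|_{L^\infty_x}$, $\|h\|_{L^2_t}\lesssim\epsilon$, apply Cauchy--Schwarz in $x$ and then H\"older $(2,\infty,2)$ in $t$ to get
\begin{equation*}
\int\!\!\int\chi|v_r||w||v|^{p-1}\,dx\,dt\ \lesssim\ \|h\|_{L^2_t}\,\|\chi^{1/2}v_r\|_{L^\infty_tL^2_x}\,\Bigl\|\chi^{1/2}\tfrac{|v|^{p-1}}{|x|^{2/(p-1)-1/2}}\Bigr\|_{L^2_{t,x}},
\end{equation*}
and then observe that with $\alpha=\frac{3-p}{p-1}$, $\beta=\frac{2(p-2)}{p-1}$ one has $\alpha+\beta=1$, $2\alpha+(p+1)\beta=2(p-1)$, $3\alpha+\beta=\frac{4}{p-1}-1$, so that by H\"older in $(t,x)$
\begin{equation*}
\Bigl\|\chi^{1/2}\tfrac{|v|^{p-1}}{|x|^{2/(p-1)-1/2}}\Bigr\|_{L^2_{t,x}}^2\ \le\ \Bigl(\int\!\!\int\chi\tfrac{|v|^2}{|x|^3}\Bigr)^{\alpha}\Bigl(\int\!\!\int\chi\tfrac{|v|^{p+1}}{|x|}\Bigr)^{\beta}\ \le\ X(T),
\end{equation*}
with both exponents positive precisely because $2<p<3$ when $d=4,5$. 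Combined with $\|\chi^{1/2}v_r\|_{L^\infty_tL^2_x}\lesssim_\delta(\frac{\mathcal E(T)}{T^2}+\frac{T^{2s_c(p-1)/(p+1)}\mathcal E(T)^{2/(p+1)}}{T^2})^{1/2}$ from $(\ref{2.34})$--$(\ref{2.34.1})$ (you should keep the second term here; dropping it is inconsequential since it reappears on the right of $(\ref{2.37})$, but as written your bound $\|\partial_rv\|_{L^2}\lesssim\mathcal E(T)^{1/2}/T$ is not quite what the conformal energy gives) and Young's inequality, the term is absorbed.

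For what it is worth, the paper handles $\int\!\int\chi|v_r||F|$ by a slightly different and arguably cleaner rearrangement: in $(\ref{2.39})$ it writes $|v_r|\lesssim_\delta\frac1T|tv_r+(d-1)v|+\frac1T|v|$ on the support of $\chi(\frac{x}{\delta T})$. The residual $\frac{|v|}{T}|F|$ is $\lesssim_\delta\frac{|v|}{|x|}|F|$ (since $|x|\lesssim\delta T$) and so folds back into the term you already handled; the main piece $\frac1T\int\!\int\chi|tv_r+(d-1)v||F|$ is then estimated by H\"older with $\|tv_r+(d-1)v\|_{L^\infty_tL^2_x}\lesssim\mathcal E(T)^{1/2}$ (a direct conformal-energy bound, with no $\mathcal E^{2/(p+1)}$ correction), $\||x|^{2/(p-1)-1/2}w\|_{L^2_tL^\infty_x}\lesssim\epsilon$, and the same Morawetz factorization of $|v|^{p-1}$; this is $(\ref{2.28})$, $(\ref{2.36})$, and $(\ref{2.40})$. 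Your dyadic-shell scheme would be much more work than either of these and is not needed.
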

\begin{proof}
This proposition follows from Proposition $\ref{p5.5}$. First,
\begin{equation}\label{2.33}
\| v(t) \|_{L^{p + 1}}^{p + 1} \lesssim \frac{\mathcal E(t)}{t^{2}}.
\end{equation}
Next, for $\delta \ll 1$ and $\delta^{1/2} T \leq t \leq T$, from $(\ref{5.15.3})$,
\begin{equation}\label{2.34}
\| \chi(\frac{x}{\delta t}) \nabla_{t,x} v(t) \|_{L^{2}}^{2} \lesssim \frac{\mathcal E(t)}{t^{2}} + \frac{1}{t^{2}} \| v(t) \|_{L_{x}^{2}(|x| \leq 2 \delta t)}^{2}.
\end{equation}
By H{\"o}lder's inequality,
\begin{equation}\label{2.34.1}
\frac{1}{t^{2}} \| v(t) \|_{L^{2}(|x| \leq 2 \delta t)}^{2} \lesssim \frac{1}{t^{2}} t^{d(1 - \frac{2}{p + 1})} \| v(t) \|_{L^{p + 1}}^{2} \lesssim \frac{t^{2 s_{c} \cdot \frac{p - 1}{p + 1}} \mathcal E(t)^{\frac{2}{p + 1}}}{t^{2}}.
\end{equation}

Next, we turn to the error term in $(\ref{2.6})$. Plugging in $(\ref{2.17})$ to $F$,
\begin{equation}\label{2.38}
\aligned
\int_{T'}^{T} \int \chi(\frac{x}{\delta T}) \frac{1}{|x|} [|v|^{p} |w| + |v|^{2} |w|^{p - 1}] dx dt \\ \lesssim \eta \int_{T'}^{T} \int \chi(\frac{x}{\delta T}) \frac{1}{|x|} |v|^{p + 1} dx dt + C(\eta) \int_{T'}^{T} \int \chi(\frac{x}{\delta T}) \frac{1}{|x|} |v|^{2} |w|^{p - 1} dx dt \\
\lesssim \eta \int_{T'}^{T} \int \chi(\frac{x}{\delta T}) \frac{1}{|x|} |v|^{p + 1} dx dt + C(\eta) (\int_{T'}^{T} \int \chi(\frac{x}{\delta T}) |v|^{2} \frac{1}{|x|^{3}} dx dt) \| |x|^{\frac{2}{p - 1}} w \|_{L_{t,x}^{\infty}}^{p - 1}.
\endaligned
\end{equation}
Taking $\eta \ll \delta \ll 1$ and $\epsilon \ll \eta$, we can absorb the two terms on the right hand side into the left hand side of $(\ref{5.38})$. Now then,
\begin{equation}\label{2.39}
\int_{T'}^{T} \int \chi(\frac{x}{\delta T}) |v_{r}| |F| dx dt \lesssim_{\delta} \frac{1}{T} \int_{T'}^{T} \int \chi(\frac{x}{\delta T}) |t v_{r} + (d - 1) v| |F| dx dt + \frac{1}{T} \int_{T'}^{T} \int \chi(\frac{x}{\delta T}) |v| |F| dx dt.
\end{equation}
Following the computations in $(\ref{2.28})$ and $(\ref{2.36})$,
\begin{equation}\label{2.40}
\aligned
\frac{1}{T} \int_{T'}^{T} \int \chi(\frac{x}{\delta T}) |t v_{r} + (d - 1) v| |F| dx dt \\ \lesssim_{\delta} \epsilon \frac{\mathcal E(T)^{1/2}}{T} (\int_{T'}^{T} \int \chi(\frac{x}{\delta T}) \frac{1}{|x|^{3}} |v|^{2} dx dt + \int_{T'}^{T} \int \chi(\frac{x}{\delta T}) \frac{1}{|x|} |v|^{p + 1} dx dt)^{1/2} \\
\lesssim_{\delta} \epsilon \frac{\mathcal E(T)}{T^{2}} + \epsilon (\int_{T'}^{T} \int \chi(\frac{x}{\delta t}) [\frac{1}{|x|^{3}} |v|^{2} + \frac{1}{|x|} |v|^{p + 1}] dx dt).
\endaligned
\end{equation}
Since $\epsilon \ll \delta \ll 1$, the second term in $(\ref{2.40})$ can be absorbed onto the left hand side, proving the proposition.
\end{proof}

\section{A modified small data argument}
Extending the previous argument to the $d \geq 6$ case when $\frac{1}{2} < s_{c} < 1$ is complicated due to the fact that $p - 1 < 1$.\medskip

If $p - 1 < 1$,
\begin{equation}\label{7.1}
|u|^{p - 1} u - |v|^{p - 1} v - |w|^{p - 1} w \lesssim \inf \{ |w| |v|^{p - 1}, |v| |w|^{p - 1} \}.
\end{equation}

As in the case of the conformal nonlinear wave equation, since $w$ is a solution to the small data problem, $v$ should usually be larger than $w$, and when it is not, we can put that part with the equation for $w$ at minimal cost. Instead split $u = v + w$, where
\begin{equation}\label{7.4}
\aligned
v_{tt} - \Delta v + (1 - \chi(\frac{u}{w})) |u|^{p - 1} u &= 0, \qquad v(0, x) = v_{0}, \qquad v_{t}(0, x) = v_{1}, \\
w_{tt} - \Delta w + \chi(\frac{u}{w}) |u|^{p - 1} u &= 0, \qquad w(0, x) = w_{0}, \qquad w_{t}(0, x) = w_{1}, \\
\endaligned
\end{equation}
$(v_{0}, v_{1})$ and $(w_{0}, w_{1})$ satisfy $(\ref{2.4})$--$(\ref{2.6})$, and $\chi \in C_{0}^{\infty}(\mathbb{R})$, $\chi(x) = 1$ for $|x| \leq 3$ and $\chi(x) = 0$ for $|x| > 6$.\medskip

It is here that we encounter the main technical difficulty arising from the case when $\frac{1}{2} < s_{c} < 1$. When $s_{c} = \frac{1}{2}$, observe that the small data result in $(\ref{4.7.1})$ depends only on the size of the solution, not on the size of its derivatives. When $\frac{1}{2} < s_{c} < 1$, $(\ref{4.7.1})$ depends on the size of the derivatives, and non-local derivatives at that. Thus, carrying over the computations in $(\ref{4.7.1})$ to the nonlinearity $\chi(\frac{u}{w}) |u|^{p - 1} u$ is probably very difficult.

Instead, we make use of the energy estimate. Interpolating the standard energy estimate,
\begin{equation}\label{7.4.1}
\| \int_{0}^{t} \frac{\sin((t - \tau) \sqrt{-\Delta})}{\sqrt{-\Delta}} F(\tau) d\tau \|_{\dot{H}^{1} \times L^{2}} \lesssim \| F \|_{L_{t}^{1} L_{x}^{2}},
\end{equation}
with Corollary $\ref{c4.7}$, for any $0 \leq \frac{1}{r} \leq 2$, $\frac{1}{r'} = 1 - \frac{1}{r}$,
\begin{equation}\label{7.4.2}
\| \int_{0}^{t} \frac{\sin((t - \tau) \sqrt{-\Delta})}{\sqrt{-\Delta}} F(\tau) d\tau \|_{\dot{H}^{1} \times L^{2}} \lesssim \sum_{k} 2^{\frac{k}{r}} \| F \|_{L_{t}^{r'} L_{x}^{2}(\mathbb{R} \times \{ x : 2^{k} \leq |x| \leq 2^{k + 1} \})}.
\end{equation}

\begin{theorem}[Small data result]\label{t7.1}
The initial value problem
\begin{equation}\label{7.5}
w_{tt} - \Delta w + \chi(\frac{u}{w}) |u|^{p - 1} u = 0, \qquad w(0, x) = w_{0}, \qquad w_{t}(0, x) = w_{1},
\end{equation}
is globally well-posed and scattering. Moreover,
\begin{equation}\label{7.6}
\| w \|_{L_{t}^{\infty} \dot{H}^{s_{c}}(\mathbb{R} \times \mathbb{R}^{d})} + \| w \|_{L_{t,x}^{\frac{(p - 1)(d + 1)}{2}}(\mathbb{R} \times \mathbb{R}^{d})} + \| w \|_{L_{t}^{2} L_{x}^{\frac{2d(p - 1)}{5 - p}}(\mathbb{R} \times \mathbb{R}^{d})} \lesssim \epsilon.
\end{equation}
Also,
\begin{equation}\label{7.7}
\| |x|^{\frac{2}{p - 1} - \frac{1}{2}} w \|_{L_{t}^{2} L_{x}^{\infty}(\mathbb{R} \times \mathbb{R}^{d})} \lesssim \epsilon.
\end{equation}
\end{theorem}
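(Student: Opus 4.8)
The plan is to run a continuity (bootstrap) argument in the norm
\begin{equation*}
\|w\|_{X(I)} := \|w\|_{L_t^\infty \dot H^{s_c}} + \|w\|_{L_{t,x}^{\frac{(p-1)(d+1)}{2}}} + \|w\|_{L_t^2 L_x^{\frac{2d(p-1)}{5-p}}} + \||x|^{\frac{2}{p-1}-\frac12} w\|_{L_t^2 L_x^\infty} + \||x|^{\frac{2}{p-1}} w\|_{L_{t,x}^\infty},
\end{equation*}
all norms taken over $I\times\mathbb R^d$ for a subinterval $I\ni 0$. The last term is auxiliary and is controlled by the first via the radial Sobolev embedding $\||x|^{d/2-s}f\|_{L^\infty}\lesssim\|f\|_{\dot H^s}$ with $s=s_c$ (recall $d/2-s_c=2/(p-1)$); it is carried along because it yields clean pointwise decay of the forcing. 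The decisive structural input is that on the support of $\chi(u/w)$ one has $|u|\le 6|w|$, so $N:=\chi(u/w)|u|^{p-1}u$ obeys $|N|\lesssim|w|^p$ pointwise, and since the data is radial so are $u$, $w$ and hence $N$. Thus $w$ solves $w_{tt}-\Delta w=-N$ with a radial source controlled by $|w|^p$; moreover $|N|\lesssim\||x|^{2/(p-1)}w\|_{L^\infty}^{p-1}\,|x|^{-2}\,|w|$, so $p-1$ of the $p$ copies of $w$ can be traded for the weight $|x|^{-2}$ at the cost of one (small) auxiliary norm. Once $\|w\|_{X(\mathbb R)}\lesssim\epsilon$ is established, global well-posedness and scattering for $(\ref{7.5})$ follow in the usual way, by partitioning $\mathbb R$ into finitely many intervals on which the scattering norm is small and iterating the local theory, exactly as in the reduction around $(\ref{5.1})$--$(\ref{5.2})$.

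Split $w=w_L+w_N$ with $w_L=\cos(t\sqrt{-\Delta})w_0+\frac{\sin(t\sqrt{-\Delta})}{\sqrt{-\Delta}}w_1$. The linear piece is immediate: Theorem $\ref{t4.1}$ and the radial Strichartz estimate Theorem $\ref{t4.3}$ (the latter is genuinely needed, since for $p-1>4/(d-1)$ the scattering exponent $\frac{(p-1)(d+1)}{2}$ lies outside the classical wave-admissible range but remains inside the radial one), together with Lemma $\ref{l4.4}$ for the weighted $L_t^2L_x^\infty$ norm and the radial Sobolev embedding for $\||x|^{2/(p-1)}w_L\|_{L_{t,x}^\infty}$, give $\|w_L\|_{X(\mathbb R)}\lesssim\|w_0\|_{\dot H^{s_c}}+\|w_1\|_{\dot H^{s_c-1}}\le\epsilon$. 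For the Duhamel piece $w_N$, the essential difficulty --- and the reason for this section --- is that the cutoff $\chi(u/w)$ makes $N$ non-smooth, so the standard small-data scheme built on the fractional-derivative Strichartz estimate $(\ref{4.5})$/$(\ref{4.7.1})$ is unavailable: one cannot put a derivative on $N$. The substitute is to route everything through $L^2$-based, spatially localized estimates. For the $L_{t,x}^{(p-1)(d+1)/2}$ and $L_t^2L_x^{2d(p-1)/(5-p)}$ norms I would use the retarded radial Strichartz estimate (Theorem $\ref{t4.3}$ with the Christ--Kiselev Lemma $\ref{l4.5}$); for the weighted norm $(\ref{7.7})$ the retarded version of Lemma $\ref{l4.4}$; and for $\|w_N\|_{L_t^\infty\dot H^{s_c}}$ an interpolation (using $\tfrac12<s_c<1$) of Corollary $\ref{c4.7}$ and $(\ref{7.4.2})$ --- the $\dot H^1$/energy level, localized to dyadic annuli --- with the dualized local energy decay of Proposition $\ref{p4.6}$ at the $L^2$/$\dot H^{-1}$ level. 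The spatial localization in these estimates is precisely what lets the argument survive the loss of smoothness.

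In every case the source is handled dyadically on $A_k:=\{2^k\le|x|\le 2^{k+1}\}$: one writes $|N|\lesssim\||x|^{2/(p-1)}w\|_{L^\infty}^{p-1}|x|^{-2}|w|\lesssim\epsilon^{p-1}2^{-2k}|w|$ on $A_k$ (or peels off fractional powers of $w$ against $(\ref{7.7})$ when that bound is sharper), then applies H\"older in $x$ and in $t$ and sums the resulting geometric series in $k$ against the weights $2^{k/r}$ supplied by $(\ref{7.4.2})$, with $r=r(p,d)$ chosen so that the sum converges. Convergence of this dyadic sum is exactly where radial symmetry and the hypothesis $s_c>\tfrac12$ (equivalently $p-1>4/(d-1)$) enter, the radial-Sobolev decay exponent $2/(p-1)=d/2-s_c$ being just large enough to beat the weight. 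Each estimate then returns $\|w_N\|_{\,\cdot\,}\lesssim\epsilon^{p-1}\|w\|_{X(I)}$, which against the bootstrap hypothesis $\|w\|_{X(I)}\le 2C_0\epsilon$ produces the gain $O(\epsilon^p)$, closing the bootstrap and yielding $(\ref{7.6})$--$(\ref{7.7})$. The main obstacle, to repeat, is to produce the $\dot H^{s_c}$ bound for $w_N$ without ever differentiating the non-smooth nonlinearity $N$; this is what forces the detour through local-in-space $L^2$ energy estimates and their interpolation, and it is also why the sharp radial weighted bound $(\ref{7.7})$ must be treated as an input to the bootstrap rather than as a by-product of it.
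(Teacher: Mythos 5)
Your high-level plan is on the right track and correctly identifies the two structural inputs that make Theorem~\ref{t7.1} work: the pointwise bound $|\chi(u/w)|u|^{p-1}u|\lesssim|w|^{p}$ on the cutoff's support, and the need to route everything through spatially localized $L^2$-based estimates because the cutoff destroys the fractional-derivative Strichartz scheme. You also rightly flag radial symmetry and the weighted $L_t^2L_x^\infty$ norm as essential. However, the proposal has a genuine gap at exactly the step you call "the essential difficulty."

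To close $\|w_N\|_{L_t^\infty\dot H^{s_c}}$ you interpolate "Corollary~\ref{c4.7} and $(\ref{7.4.2})$ at the $\dot H^1$ level with the dualized local energy decay of Proposition~\ref{p4.6} at the $L^2/\dot H^{-1}$ level." This does not parse: the dual of Proposition~\ref{p4.6} \emph{is} Corollary~\ref{c4.7} (it maps dyadically weighted $L^2_{t,x}$ sources to $\dot H^1\times L^2$ output), so you are proposing to interpolate an estimate with itself, and there is no second endpoint below $\dot H^1$ in what you wrote. More importantly, you never perform a Littlewood--Paley decomposition in \emph{frequency} of the source; you only decompose in space over annuli $A_k$. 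That is not enough. In the paper's proof the decisive move is $(\ref{7.11})$: write $|w^{(n-1)}|^{p-1}w^{(n-1)}=\sum_j|w^{(n-1)}|^{p-1}(P_jw^{(n-1)})$, estimate the Duhamel contribution of each frequency piece both at $\dot H^1\times L^2$ via Corollary~\ref{c4.7} ($(\ref{7.13.1})$, picking up $2^{j(1-s_c)}$) and at $\dot H^{1/2}\times\dot H^{-1/2}$ via the radial Strichartz pair $L_t^2L_x^{\frac{2d}{d-2+2\delta}}$ ($(\ref{7.13.2})$, picking up $2^{j\delta}$), interpolate to hit $\dot H^{s_c}$, and — crucially — obtain the almost-orthogonality gain $2^{-\sigma|j-k|}$ between input frequency $j$ and output frequency $k$ so the sum closes by Young's inequality in $(\ref{7.13.3})$. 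Without the frequency decomposition and this off-diagonal decay, the single $\dot H^1$ estimate of Corollary~\ref{c4.7} lands you at too high a regularity and there is no mechanism to descend to $\dot H^{s_c}$ in a way that closes against $\|w\|_{L_t^\infty\dot H^{s_c}}$ rather than $\|w\|_{L_t^\infty\dot H^1}$; your proposal says "interpolation" but does not supply the second endpoint, and your spatial dyadic sum against the weights $2^{k/r}$ does not by itself create the frequency gain.

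Two smaller remarks. First, the paper runs a Picard iteration $w^{(n)}$ and must prove the Lipschitz estimate on the map (the inequalities $(\ref{7.13})$--$(\ref{7.16})$, which control $|\chi(u/w^{(n)})-\chi(u/w^{(n-1)})|$) in order to invoke the contraction mapping principle. Your continuity/bootstrap variant is morally equivalent, but a bootstrap still needs an a priori local existence theory for the modified equation $(\ref{7.5})$, which is precisely what the Picard scheme furnishes; as written you are silently assuming the solution exists on the interval $I$. Second, the paper does \emph{not} prove the Lipschitz estimate using the identity $\chi(u/w)|u|^{p-1}u=\chi(u/w)|u/w|^{p-1}(u/w)\cdot|w|^{p-1}w$ directly on the difference of iterates; it needs the separate case analysis $|w^{(n)}-w^{(n-1)}|\gtrsim|w^{(n)}|+|w^{(n-1)}|$ versus the opposite, plus the fundamental-theorem-of-calculus computation $(\ref{7.15})$--$(\ref{7.16})$, and this is not redundant with your observation that $|N|\lesssim|w|^p$ pointwise.
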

\begin{remark}
Unlike \cite{dodson2023sharp1} in the conformal wave equation case, the proof of Theorem $\ref{t7.1}$ relies on the radial symmetry for the modified small data result.
\end{remark}

\begin{proof}
First note that by standard approximation analysis and persistence of regularity, $u$ and $w$ are smooth, we can assume $\chi(\frac{u}{w^{(n)}})$ is well-defined.\medskip

Define the Picard iteration scheme
\begin{equation}\label{7.8}
w^{(0)}(t) = \cos(t \sqrt{-\Delta}) w_{0} + \frac{\sin(t \sqrt{-\Delta})}{\sqrt{-\Delta}} w_{1},
\end{equation}
and for $n \geq 1$,
\begin{equation}\label{7.9}
w^{(n)}(t) = w^{(0)}(t) - \int_{0}^{t} \frac{\sin((t - \tau) \sqrt{-\Delta}}{\sqrt{-\Delta}} \chi(\frac{u}{w^{(n - 1)}}) |u(\tau)|^{p - 1} u(\tau) d\tau.
\end{equation}
First, since $|\frac{u}{w^{(n - 1)}}| \leq 6$ on the support of $\chi(\frac{u}{w^{(n - 1)}})$, decompose
\begin{equation}\label{7.10}
\chi(\frac{u}{w^{(n - 1)}}) |u(\tau)|^{p - 1} u(\tau) = \chi(\frac{u}{w^{(n - 1)}}) |\frac{u}{w^{(n - 1)}}|^{p - 1} \frac{u}{w^{(n - 1)}} \cdot |w^{(n - 1)}|^{p - 1} w^{(n - 1)}.
\end{equation}
Decompose
\begin{equation}\label{7.11}
|w^{(n - 1)}|^{p - 1} w^{(n - 1)} = \sum_{j} |w^{(n - 1)}|^{p - 1} (P_{j} w^{(n - 1)}).
\end{equation}
Let $\frac{1}{r} = \frac{1}{2} + \frac{p - 1}{2}$. By Theorem $\ref{t4.3}$,
\begin{equation}\label{7.12}
\aligned
\| |x|^{1 - \frac{1}{r}} |w^{(n - 1)}|^{p - 1} (P_{j} w^{(n - 1)}) \|_{L_{t,x}^{2}(|x| \leq 2^{-j})} \lesssim 2^{-j(1 - \frac{1}{r})} 2^{-j \frac{d}{2}} \| |x|^{\frac{2}{p - 1} - \frac{1}{2}} w^{(n - 1)} \|_{L_{t}^{2} L_{x}^{\infty}}^{p - 1} \| P_{j} w^{(n - 1)} \|_{L_{t}^{2} L_{x}^{\infty}} \\
\lesssim 2^{j(1 - s_{c})} \| |x|^{\frac{2}{p - 1} - \frac{1}{2}} w^{(n - 1)} \|_{L_{t}^{2} L_{x}^{\infty}}^{p - 1} \| P_{j} w^{(n - 1)} \|_{L_{t}^{2} L_{x}^{\frac{2d}{d - 1 - 2 s_{c}}}}.
\endaligned
\end{equation}
Also,
\begin{equation}\label{7.13}
\aligned
\sum_{k > -j} \| |x|^{1 - \frac{1}{r}} |w^{(n - 1)}|^{p - 1} (P_{j} w^{(n -1)}) \|_{L_{t}^{r} L_{x}^{2}(|x| \sim 2^{k})} \\ \lesssim \sum_{k > -j}  2^{-k(1 - s_{c})} \| |x|^{\frac{2}{p - 1} - \frac{1}{2}} w^{(n - 1)} \|_{L_{t}^{2} L_{x}^{\infty}}^{p - 1} \| P_{j} w^{(n - 1)} \|_{L_{t}^{2} L_{x}^{\frac{2d}{d - 1 - 2s_{c}}}} \\
\lesssim 2^{j(1 - s_{c})} \| |x|^{\frac{2}{p - 1} - \frac{1}{2}} w^{(n - 1)} \|_{L_{t}^{2} L_{x}^{\infty}}^{p - 1} \| P_{j} w^{(n - 1)} \|_{L_{t}^{2} L_{x}^{\frac{2d}{d - 1 - 2s_{c}}}}.
\endaligned
\end{equation}
Therefore, interpolating Corollary $\ref{c4.7}$ with the standard energy estimate for the wave equation, 
\begin{equation}\label{7.13.1}
\aligned
\| \int_{0}^{t} \frac{\sin((t - \tau) \sqrt{-\Delta})}{\sqrt{-\Delta}} \chi(\frac{u}{w^{(n - 1)}}) |\frac{u}{w^{(n - 1)}}|^{p - 1} \frac{u}{w^{(n - 1)}} \cdot |w^{(n - 1)}|^{p - 1} (P_{j} w^{(n - 1)}) d\tau \|_{\dot{H}^{1} \times L^{2}} \\ \lesssim 2^{j(1 - s_{c})} \| |x|^{\frac{2}{p - 1} - \frac{1}{2}} w^{(n - 1)} \|_{L_{t}^{2} L_{x}^{\infty}}^{p - 1} \| P_{j} w^{(n - 1)} \|_{L_{t}^{2} L_{x}^{\frac{2d}{d - 1 - 2s_{c}}}}.
\endaligned
\end{equation}
Now, observe that from the radial Strichartz estimates, for any $d > 3$, there exists $\delta(d) > 0$ such that $L_{t}^{2} L_{x}^{\frac{2d}{d - 2 + 2 \delta}}$ is an admissible pair. Replacing $L_{t}^{2} L_{x}^{\frac{2d}{d - 1 - 2 s_{c}}}$ with $L_{t}^{2} L_{x}^{\frac{2d}{d - 2 + 2 \delta}}$ in $(\ref{7.12})$--$(\ref{7.13.1})$,
\begin{equation}\label{7.13.2}
\aligned
\| \int_{0}^{t} \frac{\sin((t - \tau) \sqrt{-\Delta})}{\sqrt{-\Delta}} \chi(\frac{u}{w^{(n - 1)}}) |\frac{u}{w^{(n - 1)}}|^{p - 1} \frac{u}{w^{(n - 1)}} \cdot |w^{(n - 1)}|^{p - 1} (P_{j} w^{(n - 1)}) d\tau \|_{\dot{H}^{1/2} \times \dot{H}^{-1/2}} \\ \lesssim 2^{j \delta} \| |x|^{\frac{2}{p - 1} - \frac{1}{2}} w^{(n - 1)} \|_{L_{t}^{2} L_{x}^{\infty}}^{p - 1} \| P_{j} w^{(n - 1)} \|_{L_{t}^{2} L_{x}^{\frac{2d}{d - 2 - 2\delta}}}.
\endaligned
\end{equation}
Interpolating $(\ref{7.13.1})$ and $(\ref{7.13.2})$, there exists some $\sigma(p, d) > 0$ such that
\begin{equation}
\aligned
\| P_{k} \int_{0}^{t} \frac{\sin((t - \tau) \sqrt{-\Delta})}{\sqrt{-\Delta}} \chi(\frac{u}{w^{(n - 1)}}) |\frac{u}{w^{(n - 1)}}|^{p - 1} \frac{u}{w^{(n - 1)}} \cdot |w^{(n - 1)}|^{p - 1} (P_{j} w^{(n - 1)}) d\tau \|_{\dot{H}^{s_{c}} \times \dot{H}^{s_{c} - 1}} \\
\lesssim 2^{-\sigma |j - k|} \| |x|^{\frac{2}{p - 1} - \frac{1}{2}} w^{(n - 1)} \|_{L_{t}^{2} L_{x}^{\infty}}^{p - 1} (\| P_{j} w^{(n - 1)} \|_{L_{t}^{2} L_{x}^{\frac{2d}{d - 1 - 2s_{c}}}} + 2^{j(\delta + s_{c} - \frac{1}{2})} \| P_{j} w^{(n - 1)} \|_{L_{t}^{2} L_{x}^{\frac{2d}{d - 2 - 2 \delta}}}).
\endaligned
\end{equation}

Therefore, summing using Young's inequality,
\begin{equation}\label{7.13.3}
\aligned
\| \int_{0}^{t} \frac{\sin((t - \tau) \sqrt{-\Delta})}{\sqrt{-\Delta}} \chi(\frac{u}{w^{(n - 1)}}) |\frac{u}{w^{(n - 1)}}|^{p - 1} \frac{u}{w^{(n - 1)}} \cdot |w^{(n - 1)}|^{p - 1} w^{(n - 1)} d\tau \|_{\dot{H}^{s_{c}} \times \dot{H}^{s_{c} - 1}} \\ \lesssim \| |x|^{\frac{2}{p - 1} - \frac{1}{2}} w^{(n - 1)} \|_{L_{t}^{2} L_{x}^{\infty}}^{p - 1} (\sum_{j} 2^{2j(\delta + s_{c} - \frac{1}{2})} \| P_{j} w^{(n - 1)} \|_{L_{t}^{2} L_{x}^{\frac{2d}{d - 2 - 2\delta}}}^{2} +  \| P_{j} w^{(n - 1)} \|_{L_{t}^{2} L_{x}^{\frac{2d}{d - 1 - 2 s_{c}}}}^{2})^{1/2}.
\endaligned
\end{equation}
Therefore, by the Christ--Kiselev lemma,
\begin{equation}\label{7.13.4}
\aligned
\| |x|^{\frac{2}{p - 1}} w^{(n)} \|_{L_{t, x}^{\infty}} + \| |x|^{\frac{2}{p - 1} - \frac{1}{2}} w^{(n)} \|_{L_{t}^{2} L_{x}^{\infty}} \\ + (\sum_{j} 2^{2j(\delta + s_{c} - \frac{1}{2})} \| P_{j} w^{(n)} \|_{L_{t}^{2} L_{x}^{\frac{2d}{d - 2 - 2\delta}}}^{2} + \| P_{j} w^{(n)} \|_{L_{t}^{2} L_{x}^{\frac{2d}{d - 1 - 2 s_{c}}}}^{2})^{1/2} \\
\lesssim \epsilon + \| |x|^{\frac{2}{p - 1}} w \|_{L_{t, x}^{\infty}}^{p} + \| |x|^{\frac{2}{p - 1} - \frac{1}{2}} w \|_{L_{t}^{2} L_{x}^{\infty}}^{p} \\
+ (\sum_{j} 2^{2j(\delta + s_{c} - \frac{1}{2})} \| P_{j} w^{(n - 1)} \|_{L_{t}^{2} L_{x}^{\frac{2d}{d - 2 - 2\delta}}}^{2} + \| P_{j} w^{(n - 1)} \|_{L_{t}^{2} L_{x}^{\frac{2d}{d - 1 - 2 s_{c}}}}^{2})^{p/2}.
\endaligned
\end{equation}
Therefore, the left hand side of $(\ref{7.13.4})$ is uniformly bounded by $\epsilon$ for all $n$.

Next,
\begin{equation}\label{7.12}
w^{(n + 1)}(t) - w^{(n)}(t) = \int_{0}^{t} \frac{\sin((t - \tau) \sqrt{-\Delta}}{\sqrt{-\Delta}} [\chi(\frac{u}{w^{(n)}}) - \chi(\frac{u}{w^{(n - 1)}})] |u(\tau)|^{p - 1} u(\tau) d\tau.
\end{equation}
We show
\begin{equation}\label{7.13}
[\chi(\frac{u}{w^{(n)}}) - \chi(\frac{u}{w^{(n - 1)}}] |u(\tau)|^{p - 1} u(\tau) \lesssim |w^{(n)} - w^{(n - 1)}|  (|w^{(n)}|^{p - 1} + |w^{(n - 1)}|^{p - 1}).
\end{equation}
Indeed, when $|w^{(n)} - w^{(n - 1)}| \gtrsim |w^{(n)}| + |w^{(n - 1)}|$,
\begin{equation}\label{7.14}
[\chi(\frac{u}{w^{(n)}}) - \chi(\frac{u}{w^{(n - 1)}})] |u(\tau)|^{p - 1} u(\tau) \lesssim |w^{(n)}|^{p} + |w^{(n - 1)}|^{p} \lesssim |w^{(n)} - w^{(n - 1)}| (|w^{(n)}|^{p - 1} + |w^{(n - 1)}|^{p - 1}).
\end{equation}
For $|w^{(n)} - w^{(n - 1)}| \ll |w^{(n)}| + |w^{(n - 1)}|$, by the fundamental theorem of calculus,
\begin{equation}\label{7.15}
\aligned
\chi(\frac{u}{w^{(n)}}) - \chi(\frac{u}{w^{(n - 1)}}) = \int_{0}^{1} \frac{d}{d\tau} \chi(\frac{u}{\tau w^{(n)} + (1 - \tau) w^{(n - 1)}}) d\tau \\ = -\int_{0}^{1} \chi'(\frac{u}{\tau w^{(n)} + (1 - \tau) w^{(n - 1)}}) \frac{u}{(\tau w^{(n)} + (1 - \tau) w^{(n - 1)})^{2}} \cdot (w^{(n)} - w^{(n - 1)}) d\tau.
\endaligned
\end{equation}
By the support properties of $\chi$,
\begin{equation}\label{7.16}
\chi'(\frac{u}{\tau w^{(n)} + (1 - \tau) w^{(n - 1)}}) \frac{u}{(\tau w^{(n)} + (1 - \tau) w^{(n - 1)})^{2}} \lesssim \frac{1}{u},
\end{equation}
so $(\ref{7.13})$ also holds. Therefore, for $\epsilon > 0$ sufficiently small, the contraction mapping principle implies that $w^{(n)}$ converges in $X$, where $X$ is the space given by the norm on the left hand side of $(\ref{7.13.4})$. By Theorem $\ref{t4.3}$ and Lemmas $\ref{l4.4}$ and $\ref{l4.5}$, $(\ref{7.6})$ and $(\ref{7.7})$ hold.
\end{proof}

\section{Scattering when $d > 5$}
Now we are ready to prove scattering when $d > 5$.
\begin{theorem}\label{t3.1}
If $d > 5$ and $u$ is a solution to the conformal wave equation,
\begin{equation}\label{3.1}
u_{tt} - \Delta u + |u|^{p - 1} u = 0, \qquad u(0,x) = u_{0} \in \dot{H}^{s_{c}}, \qquad u_{t}(0, x) = u_{1} \in \dot{H}^{s_{c} - 1}, \qquad u_{0}, u_{1} \qquad \text{radial},
\end{equation}
then $u$ is a global solution to $(\ref{3.1})$ and scatters, that is
\begin{equation}\label{3.2}
\| u \|_{L_{t}^{\frac{p + 1}{2(1 - s_{c})}} L_{x}^{p + 1}(\mathbb{R} \times \mathbb{R}^{d})} \leq C(u_{0}, u_{1}) < \infty.
\end{equation}
\end{theorem}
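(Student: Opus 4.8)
The plan is to rerun the conformal-energy argument of Theorem \ref{t2.1}, replacing the splitting $(\ref{2.15})$ by the modified one $(\ref{7.4})$ so that the small-data piece $w$ is governed by Theorem \ref{t7.1} rather than by linear small-data theory. As in Section $5$, I would first rescale and translate the data to $t = 1$, choose $N$ and then $R$ so that $(\ref{2.4})$--$(\ref{2.6})$ hold with $\epsilon$ as small as needed, and write $u = v + w$, where $w$ solves $(\ref{7.5})$ and $v_{tt} - \Delta v + (1 - \chi(\tfrac{u}{w})) |u|^{p-1} u = 0$ with data $(v_0, v_1)$. Theorem \ref{t7.1} (and $(\ref{7.13.4})$ in its proof) supplies all the bounds on $w$ that played the role of $(\ref{2.16.1})$ in the lower-dimensional case, in particular $\| |x|^{\frac{2}{p-1}} w \|_{L_{t,x}^\infty} \lesssim \epsilon$ and $\| |x|^{\frac{2}{p-1} - \frac12} w \|_{L_t^2 L_x^\infty(\mathbb{R} \times \mathbb{R}^d)} \lesssim \epsilon$.

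Next I would introduce the conformal energy $\mathcal{E}(t)$ of $v$ as in $(\ref{5.15.3})$ and differentiate it. Now $v$ solves $v_{tt} - \Delta v + |v|^{p-1} v = G$ with $G = |v|^{p-1} v - (1 - \chi(\tfrac{u}{w})) |u|^{p-1} u$, so the key input is the pointwise bound $|G| \lesssim |w| |v|^{p-1} + |w|^{p-1} |v|$, which is $(\ref{7.1})$. This follows by a short case analysis using $p - 1 < 1$: on the support of $1 - \chi(\tfrac{u}{w})$ one has $|u| \geq 3 |w|$, hence $|v| = |u - w| \sim |u|$ and $|G| \lesssim (|v|^{p-1} + |u|^{p-1})|w| \lesssim |w| |v|^{p-1}$; while where $\chi(\tfrac{u}{w})$ is nonzero one has $|u| \leq 6 |w|$, hence $|v| \lesssim |w|$ and $|G| \leq |v|^p + |u|^p \lesssim |w|^{p-1} |v|$. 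With $(\ref{7.1})$ in hand, the estimate for $\tfrac{d}{dt} \mathcal{E}(t)$ splits into the regions $|x| \leq \delta |t|$ and $|x| > \delta |t|$ exactly as in $(\ref{2.19})$--$(\ref{2.22})$: the outer region is controlled by the radial Sobolev embedding and $(\ref{7.7})$, leaving the two inner integrals of the shape $(\ref{2.26})$.

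The core is then the local-energy/Morawetz bound $(\ref{2.27.1})$, obtained by applying Proposition \ref{p5.5} to $v$ with forcing $F = G$, together with $(\ref{2.33})$--$(\ref{2.34.1})$; the extra Morawetz error terms $\int \int \chi(\tfrac{x}{\delta T}) |v_r| |G|$ and $\int \int \chi(\tfrac{x}{\delta T}) \tfrac{1}{|x|} |v| |G|$ are absorbed as in $(\ref{2.38})$--$(\ref{2.40})$ by taking $\eta \ll \delta \ll 1$ and $\epsilon \ll \eta$. The Hölder splittings $(\ref{2.28})$ and $(\ref{2.36})$ of the integrals in $(\ref{2.26})$ must be redone here --- since $p < 2$ the literal factor $\| |x|^{\frac{2}{p-1}} w \|^{p-2}$ in $(\ref{2.36})$ has the wrong sign --- but the same weighted objects suffice: distribute $|v|$ among $\tfrac{1}{|x|^{3/2}} v \in L_{t,x}^2$ and $\tfrac{1}{|x|^{1/(p+1)}} v \in L_{t,x}^{p+1}$ (both from Proposition \ref{p5.5}) and put $|w|^{p-1}$ into $\| |x|^{\frac{2}{p-1} - \frac12} w \|_{L_t^2 L_x^\infty}^{p-1}$, using $\| (\tau + |x|) Lv + (d-1)v \|_{L_\tau^\infty L_x^2} \lesssim \mathcal{E}(t)^{1/2}$ and $p \le 2$ for the time integrability. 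This bounds the inner integrals by $\mathcal{E}(t)^{1/2} \| |x|^{\frac{2}{p-1} - \frac12} w \|_{L_t^2 L_x^\infty} \big( \tfrac{\mathcal{E}(t)}{t^2} + \tfrac{t^{2 s_c (p-1)/(p+1)} \mathcal{E}(t)^{2/(p+1)}}{t^2} \big)^{1/2}$, whose $\tfrac{\mathcal{E}(t)}{t^2}$ part is absorbed via $(\ref{2.29})$. Plugging into the fundamental-theorem-of-calculus decomposition $(\ref{2.23})$--$(\ref{2.23.1})$, the change of variables $(\ref{2.24})$--$(\ref{2.24.1})$, the Fubini estimates $(\ref{5.15.7})$--$(\ref{5.15.8})$, and Cauchy--Schwarz as in $(\ref{2.31})$ with the finiteness of $\| |x|^{\frac{2}{p-1} - \frac12} w \|_{L_t^2 L_x^\infty(\mathbb{R} \times \mathbb{R}^d)}$ then yields $\int_1^\infty \tfrac{\mathcal{E}(t)^{1/(2(1-s_c))}}{t^{1/(1-s_c)}} \, dt \lesssim 1 + \mathcal{E}(1)^{1/(2(1-s_c))}$. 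Since $\| v(t) \|_{L_x^{p+1}}^{p+1} \lesssim \mathcal{E}(t)/t^2$, this gives $\| v \|_{L_t^{(p+1)/(2(1-s_c))} L_x^{p+1}} < \infty$, and combining with the small-data bounds $(\ref{7.6})$ on $w$, the identity $u = v + w$, and Lemma \ref{l2.2} gives $(\ref{3.2})$.

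I expect the main obstacle --- past Theorem \ref{t7.1} itself, which is the genuinely new ingredient --- to be the exponent bookkeeping forced by $p - 1 < 1$: verifying $(\ref{7.1})$ uniformly across the transition region $3 \le |u/w| \le 6$, and, more seriously, redoing the Hölder splittings in $(\ref{2.28})$/$(\ref{2.36})$ so that the weaker dispersive decay recorded in $(\ref{7.7})$ still suffices to close the estimate, and so that feeding $F = G$ into Proposition \ref{p5.5} produces only error terms of the same form as in $(\ref{2.38})$, which the $\eta$--$\delta$--$\epsilon$ hierarchy can absorb.
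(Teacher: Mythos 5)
Your overall structure matches the paper: split $u = v + w$ via $(\ref{7.4})$ so that Theorem~$\ref{t7.1}$ controls $w$, establish a conformal-energy/Morawetz estimate on $v$, and feed the result into Lemma~$\ref{l2.2}$. You also correctly observe that $(\ref{2.36})$ breaks down for $p < 2$ because of the factor $\| |x|^{\frac{2}{p-1}} w \|^{p-2}_{L^\infty}$.

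However, the replacement you propose does not close, and this is precisely where the paper has to do something genuinely new. You propose to bound $\int \tau |(\tau+|x|)Lv + (d-1)v| \, |F| \, dx\,d\tau$ by placing $(\tau+|x|)Lv + (d-1)v$ in $L^\infty_\tau L^2_x$, $w$ in the weighted space $\| |x|^{\frac{2}{p-1}-\frac12} w\|_{L^2_\tau L^\infty_x}$, and distributing the remaining powers of $v$ between $\| |x|^{-3/2} v\|_{L^2_{\tau,x}}$ and $\| |x|^{-1/(p+1)} v\|_{L^{p+1}_{\tau,x}}$. If you carry out the bookkeeping --- matching up the H\"older exponents in $\tau$ and $x$ and requiring the spatial weights to sum to zero --- the distribution exponent $b$ on the $L^{p+1}_{\tau,x}$ piece is forced to be $b = p - \frac{2}{p-1}$ (for the $|w|^{p-1}|v|$ term) or $b = p-1 - (\tfrac{2}{p-1}-1)$ (for the $|w||v|^{p-1}$ term); in either case $b \geq 0$ requires $(p-2)(p+1) \geq 0$, i.e.\ $p \geq 2$. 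So your distribution fails for exactly the same reason $(\ref{2.36})$ does. Any scheme that only uses $\| (\tau+|x|)Lv + (d-1)v \|_{L^\infty_\tau L^2_x}$ as its input on the derivative term will hit this wall: the conformal energy alone does not provide enough spatial decay when $p < 2$.

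The paper's resolution is to exploit $p < 2$ rather than fight it: it introduces $\| |x|^{-1/2}\{(\tau+|x|)Lv + (d-1)v\}\|_{L^2_{\tau,x}}$ raised to the \emph{positive} power $2-p$ (see $(\ref{3.10})$, $(\ref{3.12})$, $(\ref{3.15})$). This is a local-energy norm on the derivative term, controlled not by the conformal energy directly but by the new $\sup_{R>0} R^{-1}\int\!\!\int_{|x|\leq R} \chi\,[|\nabla v|^2 + v_t^2 + |v|^{p+1}]$ estimate of Proposition~$\ref{p3.2}$. Because that estimate is only uniform over dyadic annuli and not summable in $|x|$, the paper further splits $|x| \leq \delta|t|$ into $|x| \leq \mathcal R$ and $\mathcal R \leq |x| \leq \delta|t|$ at the scale $(\ref{3.11})$, injects a small parameter $\sigma$ to generate geometric decay across annuli, and uses the radial Sobolev embedding $\| |x|^{\frac{d}{2}-1} v\|_{L^\infty_{t,x}}$ on the inner region. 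None of this appears in your outline, and without it the conformal-energy bound will not close for $d > 5$. You flagged ``exponent bookkeeping'' as the remaining risk; it is in fact fatal to your plan as written.
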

\begin{proof}
Using the partition $u = v + w$ in $(\ref{7.4})$, let
\begin{equation}\label{3.4}
F = (1 - \chi(\frac{u}{w})) |u|^{p - 1} u - |v|^{p - 1} v.
\end{equation}
As in $(\ref{2.17})$--$(\ref{2.22})$,
\begin{equation}\label{3.5}
\mathcal E(1) \lesssim R^{2} (\| v_{0} \|_{\dot{H}^{s_{c}}}^{2} + \| v_{1} \|_{\dot{H}^{s_{c} - 1}}^{2} + \| v_{0} \|_{\dot{H}^{s_{c}}}^{p + 1}),
\end{equation}
and
\begin{equation}\label{3.6}
\frac{d}{dt} \mathcal E(t) = -\langle (t + |x|) Lv + (d - 1)v, (t + |x|) F \rangle - \langle (t - |x|) \underline{L}v + (d - 1)v, (t - |x|) F \rangle.
\end{equation}
Since $1 - \chi(\frac{u}{w})$ is supported on the set $|u| \geq 3 |w|$, $|v| \gtrsim |w|$ on the support of $(1 - \chi(\frac{u}{w}))$. Therefore,
\begin{equation}\label{3.8}
F \lesssim |v|^{p}.
\end{equation}
Also,
\begin{equation}\label{3.7}
\aligned
F = (1 - \chi(\frac{u}{w})) |u|^{p - 1} u - |v|^{p - 1} v = [|u|^{p - 1} u - |v|^{p - 1} v] - \chi(\frac{u}{w}) |u|^{p - 1} u \\ \lesssim |w| (|v|^{p - 1} + |w|^{p - 1}) + |w|^{p} \lesssim |w| |v|^{p - 1}.
\endaligned
\end{equation}
We can handle the terms
\begin{equation}\label{3.9}
\aligned
\| |x| F \|_{L^{2}} + \| tF \|_{L^{2}(|x| \geq \delta |t|)},
\endaligned
\end{equation}
in a manner analogous to $(\ref{2.19})$--$(\ref{2.22})$.\medskip

By H{\"o}lder's inequality,
\begin{equation}\label{3.10}
\aligned
t \int_{t'}^{t} \int |(\tau + |x|) Lv + (d - 1) v| |F| dx d\tau \\
\lesssim t \| |x|^{-1/2} \{ (\tau + |x|) Lv + (d - 1)v \} \|_{L_{t,x}^{2}}^{2 - p} \| \frac{1}{|x|^{3/2}} v \|_{L_{t,x}^{2}} \| \{ (\tau + |x|) Lv + (d - 1)v \} \|_{L_{t}^{\infty} L_{x}^{2}}^{p - 1} \| |x|^{\frac{2}{p - 1} - \frac{1}{2}} w \|_{L_{t}^{2} L_{x}^{\infty}}^{p - 1} \\
\lesssim t \mathcal E(t)^{\frac{p - 1}{2}} \| |x|^{-1/2} \{ (\tau + |x|) Lv + (d - 1)v \} \|_{L_{t,x}^{2}}^{2 - p} \| \frac{1}{|x|^{3/2}} v \|_{L_{t,x}^{2}} \| |x|^{\frac{2}{p - 1} - \frac{1}{2}} w \|_{L_{t}^{2} L_{x}^{\infty}}^{p - 1}.
\endaligned
\end{equation}
Following Propositions $\ref{p2.2}$ and $\ref{p5.5}$, we have
\begin{equation}\label{3.10.1}
\| \frac{1}{|x|^{3/2}} v \|_{L_{t,x}^{2}(|x| \leq \delta t)}^{2} \lesssim \frac{\mathcal E(t)}{t^{2}} + \frac{t^{2s_{c} \cdot \frac{p - 1}{p + 1}} \mathcal E(t)^{\frac{2}{p + 1}}}{t^{2}}.
\end{equation}
If we also had
\begin{equation}\label{3.10.2}
\| |x|^{-1/2} \{ (\tau + |x|) Lv + (d - 1)v \} \|_{L_{t,x}^{2}}^{2} \lesssim \mathcal E(t) + t^{2 s_{c} \cdot \frac{p - 1}{p + 1}} \mathcal E(t)^{\frac{2}{p + 1}},
\end{equation}
then we could proceed as in the $d = 4$ and $d = 5$ case. However, Proposition $\ref{p5.5}$ doesn't quite give $(\ref{3.10.2})$, but only on an annulus $R \leq |x| \leq 2R$. So instead, split $|x| \leq \delta |t|$ into $|x| \leq \mathcal R$ and $\mathcal R \leq |x| \leq \delta |t|$.\medskip

Let
\begin{equation}\label{3.11}
\mathcal R = \inf \{ (\frac{\mathcal E(t)}{t^{2}} + \frac{t^{2 s_{c} \cdot \frac{p - 1}{p + 1}} \mathcal E(t)^{\frac{2}{p + 1}} }{t^{2}})^{-\frac{1}{2(1 - s_{c})}}, \delta |t| \}.
\end{equation}
Fix some $\sigma(p) > 0$.
\begin{equation}\label{3.12}
\aligned
t \int_{t'}^{t} \int_{\mathcal R \leq |x| \leq \delta |t|} |(\tau + |x|) Lv + (d - 1) v| |F| dx d\tau \\ \lesssim t \| (t + |x|) Lv + (d - 1) v \|_{L_{t}^{\infty} L_{x}^{2}}^{p - 1 - \sigma} \| |x|^{-1/2} |x|^{-\frac{(1 - s_{c})}{2 - p + \sigma} \sigma} \{ (t + |x|) Lv + (d - 1) v \} \|_{L_{t,x}^{2}(\mathcal R \leq |x| \leq \delta t)}^{2 - p + \sigma} \\ \times \| \frac{1}{|x|^{3/2}} v \|_{L_{t,x}^{2}}^{1 - \sigma} \| |x|^{\frac{2}{p - 1} - \frac{1}{2}} w \|_{L_{t}^{2} L_{x}^{\infty}}^{p - 1} \| |x|^{-s_{c}} w \|_{L_{t}^{\infty} L_{x}^{2}}^{\sigma} \\
\lesssim t \epsilon^{\sigma} \mathcal R^{-(1 - s_{c}) \sigma} \mathcal E(t)^{\frac{p - 1 - \sigma}{2}} (\mathcal E(t) + t^{2 s_{c} \cdot \frac{p - 1}{p + 1}} \mathcal E(t)^{\frac{2}{p + 1}})^{\frac{2 - p + \sigma}{2}}(\frac{\mathcal E(t)}{t^{2}} + \frac{t^{2 s_{c} \cdot \frac{p - 1}{p + 1}} \mathcal E(t)^{\frac{2}{p + 1}}}{t^{2}})^{\frac{1 - \sigma}{2}} \| |x|^{\frac{2}{p - 1} - \frac{1}{2}} w \|_{L_{t}^{2} L_{x}^{\infty}}^{p - 1} \\
\lesssim \epsilon^{\sigma} \mathcal E(t)^{\frac{p - 1 - \sigma}{2}} (\mathcal E(t) + t^{2 s_{c} \cdot \frac{p - 1}{p + 1}} \mathcal E(t)^{\frac{2}{p + 1}})^{\frac{3 - p + \sigma}{2}} \| |x|^{\frac{2}{p - 1} - \frac{1}{2}} w \|_{L_{t}^{2} L_{x}^{\infty}}^{p - 1}.
\endaligned
\end{equation}
\begin{remark}
Once again, the terms with $(t + |x|) Lv$ and $(t - |x|) \underline{L} v$ can be handled in exactly the same manner.
\end{remark}
Next,
\begin{equation}\label{3.15}
\aligned
t \int_{t'}^{t} \int_{|x| \leq \mathcal R} |(\tau + |x|) Lv + (d - 1) v| |F| dx d\tau \\ \lesssim t \| (t + |x|) Lv + (d - 1) v \|_{L_{t}^{\infty} L_{x}^{2}}^{p - 1 - \sigma} \| |x|^{-1/2} |x|^{\frac{(1 - s_{c})}{2 - p + \sigma} \cdot \sigma}  \{ (t + |x|) Lv + (d - 1) v \} \|_{L_{t,x}^{2}( |x| \leq \mathcal R)}^{2 - p + \sigma} \\ \times \| \frac{1}{|x|^{3/2}} v \|_{L_{t,x}^{2}} \| |x|^{\frac{d}{2} - 1} v \|_{L_{t,x}^{\infty}}^{\sigma} \| |x|^{\frac{2}{p - 1} - \frac{1}{2}} w \|_{L_{t}^{2} L_{x}^{\infty}}^{p - 1 - \sigma}  \\
\lesssim t \mathcal R^{(1 - s_{c}) \sigma} \mathcal E(t)^{\frac{p - 1 - \sigma}{2}} (\mathcal E(t) + t^{2 s_{c} \cdot \frac{p - 1}{p + 1}} \mathcal E(t)^{\frac{2}{p + 1}})^{\frac{2 - p + \sigma}{2}}(\frac{\mathcal E(t)}{t^{2}} + \frac{t^{2 s_{c} \cdot \frac{p - 1}{p + 1}} \mathcal E(t)^{\frac{2}{p + 1}}}{t^{2}})^{\frac{1 + \sigma}{2}} \| |x|^{\frac{2}{p - 1} - \frac{1}{2}} w \|_{L_{t}^{2} L_{x}^{\infty}}^{p - 1 - \sigma} \\
\lesssim \mathcal E(t)^{\frac{p - 1 - \sigma}{2}} (\mathcal E(t) + t^{2 s_{c} \cdot \frac{p - 1}{p + 1}} \mathcal E(t)^{\frac{2}{p + 1}})^{\frac{3 - p + \sigma}{2}} \| |x|^{\frac{2}{p - 1} - \frac{1}{2}} w \|_{L_{t}^{2} L_{x}^{\infty}}^{p - 1 - \sigma}.
\endaligned
\end{equation}
\begin{remark}
We use the radial Sobolev embedding theorem and $(\ref{2.34})$, $(\ref{2.34.1})$, to prove
\begin{equation}
\| |x|^{\frac{d}{2} - 1} v \|_{L_{t,x}^{\infty}(|x| \leq \delta |t|)}^{2} \lesssim \| \chi(\frac{x}{\delta |t|} v \|_{\dot{H}^{1}}^{2} \lesssim \frac{\mathcal E(t)}{t^{2}} + \frac{\mathcal E(t)^{\frac{2}{p + 1}} t^{2s_{c} \cdot \frac{p - 1}{p + 1}}}{t^{2}}.
\end{equation}
\end{remark}

Now then, following the computations in $(\ref{2.28})$--$(\ref{2.40})$, Theorem $\ref{t3.1}$ is proved, if we can prove

\begin{equation}\label{3.16.1}
\sup_{R > 0} R^{-1} \| \chi(\frac{x}{\delta t}) \nabla_{t,x} v \|_{L_{t,x}^{2}(|x| \leq R)}^{2} + \int_{t'}^{t} \int \chi(\frac{x}{\delta t}) [\frac{1}{|x|} |v|^{p + 1} + \frac{1}{|x|^{3}} |v|^{2}] dx dt \lesssim_{\delta} \frac{\mathcal E(t)}{t^{2}} + \frac{t^{2 s_{c} \cdot \frac{p - 1}{p + 1}} \mathcal E(t)^{\frac{2}{p + 1}}}{t^{2}}.
\end{equation}

\begin{proposition}\label{p3.2}
For $d > 4$, if $T > 1$, $T' = \sup \{ 1, \delta^{1/2} T \}$,
\begin{equation}\label{3.17}
\aligned
\sup_{R > 0} R^{-1} \int_{T'}^{T} \int_{|x| \leq R} \chi(\frac{x}{\delta T}) [|\nabla v|^{2} + v_{t}^{2} + |v|^{p + 1}] dx dt \\ + \int_{T'}^{T} \int \chi(\frac{x}{\delta T}) [\frac{1}{|x|^{3}} v^{2} + \frac{1}{|x|} |v|^{p + 1}] dx dt \lesssim_{\delta} \frac{\mathcal E(T)}{T^{2}} + \frac{T^{2 s_{c} \cdot \frac{p - 1}{p + 1}} \mathcal E(T)^{\frac{2}{p + 1}}}{T^{2}}.
\endaligned
\end{equation}
\end{proposition}
\begin{proof}
The computations in $(\ref{2.33})$--$(\ref{2.38})$ can be transferred over to Proposition $\ref{p3.2}$. Now then, following $(\ref{3.11})$ and $(\ref{3.12})$,
\begin{equation}\label{3.18}
\aligned
\int_{t'}^{t} \int \chi(\frac{x}{\delta t}) |v_{r}| |F| dx dt \lesssim (\sup_{R > 0} R^{-1} \int_{t'}^{t} \int \chi(\frac{x}{\delta t}) |\nabla_{t,x} v|^{2} dx d\tau)^{\frac{2 - p + \sigma}{2}} (\sup_{\tau \in [t', t]} \int \chi(\frac{x}{\delta t}) |\nabla_{t, x} v|^{2} dx)^{\frac{p - 1 - \sigma}{2}} \\
\times  \left[ (\int_{t'}^{t} \int \chi(\frac{x}{\delta t}) \frac{1}{|x|^{3}} v^{2})^{\frac{1 - \sigma}{2}} \mathcal R^{-(1 - s_{c}) \sigma} + (\int_{t'}^{t} \int \chi(\frac{x}{\delta t}) \frac{1}{|x|^{3}} v^{2})^{\frac{1 + \sigma}{2}} \mathcal R^{(1 - s_{c}) \sigma} \right].
\endaligned
\end{equation}
Absorbing terms into the space-time integrals into the left hand side and using the computations in $(\ref{2.33})$--$(\ref{2.38})$ proves the Proposition.
\end{proof}

Proposition $\ref{p3.2}$ completes the proof of Theorem $\ref{t3.1}$.

\end{proof}

\section{Profile decomposition}
\begin{proof}[Proof of Theorem $\ref{t1.1}$]
In light of Theorems $\ref{t2.1}$ and $\ref{t3.1}$, to prove Theorem $\ref{t1.1}$, it suffices to prove that if $(u_{n}^{0}, u_{n}^{1})$ is a sequence of initial data satisfying
\begin{equation}\label{6.1}
\| u_{0}^{n} \|_{\dot{H}^{s_{c}}} + \| u_{1}^{n} \|_{\dot{H}^{s_{c} - 1}} \leq A < \infty,
\end{equation}
then
\begin{equation}\label{6.2}
\sup_{n} \| u^{n} \|_{L_{t,x}^{q}(\mathbb{R} \times \mathbb{R}^{d})} < \infty, \qquad q = \frac{(d + 1)(p - 1)}{2},
\end{equation}
is uniformly bounded, where $u^{n}$ is the solution to $(\ref{1.1})$ with initial data $(u_{0}^{n}, u_{1}^{n})$.

Indeed, since $\dot{H}^{s_{c}} \times \dot{H}^{s_{c} - 1}$ is separable, we can take a dense sequence $(u_{0}^{n}, u_{1}^{n})$ in $(\ref{6.1})$. Passing to a subsequence where $(\ref{6.2})$ is increasing, it is enough to show that Theorems $\ref{t2.1}$, $\ref{t3.1}$, and a standard profile decomposition argument imply that $(\ref{6.2})$ is uniformly bounded for any $A$. By standard perturbative arguments, Theorem $\ref{t1.1}$ follows.
\begin{remark}
Observe that this argument does not give any idea how the function on the right hand side of $(\ref{1.3})$ depends on $A$.
\end{remark}
The argument proving $(\ref{6.2})$ is identical to the argument in \cite{dodson2018global} for the cubic wave equation, $(\ref{1.1})$ with $d = 3$, and uses the profile decomposition in \cite{ramos2012refinement}.

\begin{remark}
It is useful to use the notation $S(t)(f, g)$, which denotes the solution to the free wave equation with initial data $(f, g)$,
\begin{equation}\label{6.9}
S(t)(f, g) = \cos(t \sqrt{-\Delta}) f + \frac{\sin(t \sqrt{-\Delta})}{\sqrt{-\Delta}} g.
\end{equation}
\end{remark}

\begin{theorem}[Profile decomposition]\label{t6.1}
Suppose that there is a uniformly bounded, radially symmetric sequence
\begin{equation}\label{6.3}
\| u_{0}^{n} \|_{\dot{H}^{s_{c}}(\mathbb{R}^{d})} + \| u_{1}^{n} \|_{\dot{H}^{s_{c} - 1}(\mathbb{R}^{d})} \leq A < \infty.
\end{equation}
Then there exists a subsequence, also denoted $(u_{0}^{n}, u_{1}^{n}) \subset \dot{H}^{s_{c}} \times \dot{H}^{s_{c} - 1}$ such that for any $N < \infty$,
\begin{equation}\label{6.4}
S(t)(u_{0}^{n}, u_{1}^{n}) = \sum_{j = 1}^{N} \Gamma_{n}^{j} S(t)(\phi_{0}^{j}, \phi_{1}^{j}) + S(t)(R_{0, n}^{N}, R_{1,n}^{N}),
\end{equation}
with
\begin{equation}\label{6.5}
\lim_{N \rightarrow \infty} \limsup_{n \rightarrow \infty} \| S(t)(R_{0,n}^{N}, R_{1,n}^{N}) \|_{L_{t,x}^{q}(\mathbb{R} \times \mathbb{R}^{d})} = 0.
\end{equation}
Here, $\Gamma_{n}^{j} = (\lambda_{n}^{j}, t_{n}^{j})$ belongs to the group $(0, \infty) \times \mathbb{R}$, which acts by
\begin{equation}\label{6.6}
\Gamma_{n}^{j} F(t,x) = (\lambda_{n}^{j})^{\frac{2}{p - 1}} F(\lambda_{n}^{j} (t - t_{n}^{j}), \lambda_{n}^{j} x).
\end{equation}
The $\Gamma_{n}^{j}$ are pairwise orthogonal, that is, for every $j \neq k$,
\begin{equation}\label{6.7}
\lim_{n \rightarrow \infty} \frac{\lambda_{n}^{j}}{\lambda_{n}^{k}} + \frac{\lambda_{n}^{k}}{\lambda_{n}^{j}} + (\lambda_{n}^{j})^{1/2} (\lambda_{n}^{k})^{1/2} |t_{n}^{j} - t_{n}^{k}| = \infty.
\end{equation}
Furthermore, for every $N \geq 1$,
\begin{equation}\label{6.8}
\aligned
\| (u_{0, n}, u_{1, n}) \|_{\dot{H}^{s_{c}} \times \dot{H}^{s_{c} - 1}}^{2} = \sum_{j = 1}^{N} \| (\phi_{0}^{j}, \phi_{0}^{k}) \|_{\dot{H}^{s_{c}} \times \dot{H}^{s_{c} - 1}}^{2} + \| (R_{0, n}^{N}, R_{1, n}^{N}) \|_{\dot{H}^{s_{c}} \times \dot{H}^{s_{c} - 1}}^{2} + o_{n}(1).
\endaligned
\end{equation}
\end{theorem}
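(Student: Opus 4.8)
The plan is to follow the now-standard iterative bubble-extraction scheme of Bahouri--Gérard, in the intercritical formulation of \cite{ramos2012refinement}, exploiting radial symmetry to eliminate the spatial translation parameters so that only the scaling-plus-time-translation group $(0,\infty)\times\mathbb{R}$ of $(\ref{6.6})$ appears. The engine of the argument is a refined Strichartz estimate: there is an exponent $\theta=\theta(d,p)\in(0,1)$ and a norm $\dot B$ weaker than $\dot H^{s_c}\times\dot H^{s_c-1}$ — for concreteness of Besov type, obtained by interpolating the radial estimate $(\ref{4.13})$ at two admissible exponents bracketing $q=\tfrac{(d+1)(p-1)}{2}$ — such that
\begin{equation}
\|S(t)(f,g)\|_{L^q_{t,x}(\mathbb R\times\mathbb R^d)} \lesssim \|(f,g)\|_{\dot H^{s_c}\times\dot H^{s_c-1}}^{1-\theta}\,\|(f,g)\|_{\dot B}^{\theta}.
\end{equation}
Combined with a concentration-compactness argument, this yields the inverse Strichartz statement I would use as a black box: if $\|(f_n,g_n)\|_{\dot H^{s_c}\times\dot H^{s_c-1}}\le A$ and $\|S(t)(f_n,g_n)\|_{L^q_{t,x}}\ge\epsilon$, then after passing to a subsequence there are parameters $\Gamma_n=(\lambda_n,t_n)$ and a nonzero $(\phi_0,\phi_1)\in\dot H^{s_c}\times\dot H^{s_c-1}$ with $\|(\phi_0,\phi_1)\|_{\dot H^{s_c}\times\dot H^{s_c-1}}\gtrsim(\epsilon/A)^{\beta}$ for some $\beta=\beta(d,p)>0$, such that the rescaled, time-translated data $\Gamma_n^{-1}(f_n,g_n)$ converges weakly in $\dot H^{s_c}\times\dot H^{s_c-1}$ to $(\phi_0,\phi_1)$.

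With this in hand the extraction proceeds inductively. Set $(R^0_{0,n},R^0_{1,n})=(u^n_0,u^n_1)$. Given $(R^{j-1}_{0,n},R^{j-1}_{1,n})$, if $\limsup_n\|S(t)(R^{j-1}_{0,n},R^{j-1}_{1,n})\|_{L^q_{t,x}}=0$ we stop and set the remaining profiles to zero; otherwise we pass to a subsequence realizing the $\limsup$, apply the inverse estimate to extract $\Gamma_n^j$ and a nonzero profile $(\phi_0^j,\phi_1^j)$, and define $(R^j_{0,n},R^j_{1,n})=(R^{j-1}_{0,n},R^{j-1}_{1,n})-\Gamma_n^j(\phi_0^j,\phi_1^j)$, diagonalizing in $n$ over the finitely many indices $j\le N$ present at each stage. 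Two standard facts then follow. First, the weak convergence $(\Gamma_n^j)^{-1}(R^{j-1}_{0,n},R^{j-1}_{1,n})\rightharpoonup(\phi_0^j,\phi_1^j)$ together with $(\Gamma_n^k)^{-1}(R^{j-1}_{0,n},R^{j-1}_{1,n})\rightharpoonup 0$ for all $k<j$ (built into the construction) forces the pairwise orthogonality $(\ref{6.7})$: if $(\ref{6.7})$ failed for some $k<j$ one could pass to a further subsequence on which $(\Gamma_n^j)^{-1}\Gamma_n^k$ converges in the group, giving $(\Gamma_n^j)^{-1}(R^{j-1}_{0,n},R^{j-1}_{1,n})\rightharpoonup 0$, contradicting $(\phi_0^j,\phi_1^j)\neq 0$. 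Second, because $\dot H^{s_c}\times\dot H^{s_c-1}$ is a Hilbert space invariant under each $\Gamma_n^j$ and the cross term vanishes by weak convergence, $\|(R^{j-1}_{0,n},R^{j-1}_{1,n})\|^2=\|(\phi_0^j,\phi_1^j)\|^2+\|(R^j_{0,n},R^j_{1,n})\|^2+o_n(1)$; iterating this and using orthogonality of all the $\Gamma_n^j$ among themselves gives $(\ref{6.8})$, and in particular $\sum_j\|(\phi_0^j,\phi_1^j)\|_{\dot H^{s_c}\times\dot H^{s_c-1}}^2\le A^2$.

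The conclusion $(\ref{6.5})$ is then forced. Write $\epsilon_{N}=\limsup_n\|S(t)(R^{N}_{0,n},R^{N}_{1,n})\|_{L^q_{t,x}}$; if the process did not terminate, then at stage $N+1$ the inverse estimate produced a profile with $\|(\phi_0^{N+1},\phi_1^{N+1})\|\gtrsim(\epsilon_N/A)^{\beta}$, while from $(\ref{6.8})$ the summability $\sum_j\|(\phi_0^j,\phi_1^j)\|^2\le A^2$ gives $\|(\phi_0^{N+1},\phi_1^{N+1})\|\to 0$, hence $\epsilon_N\to 0$, which is exactly $(\ref{6.5})$. To make the bookkeeping fully rigorous (and for the downstream perturbation argument) one also records the almost-orthogonal decoupling of the scattering norm: for fixed $N$,
\begin{equation}
\Big\|\sum_{j=1}^N \Gamma_n^j S(t)(\phi_0^j,\phi_1^j)\Big\|_{L^q_{t,x}}^q=\sum_{j=1}^N\|\Gamma_n^j S(t)(\phi_0^j,\phi_1^j)\|_{L^q_{t,x}}^q+o_n(1),
\end{equation}
which follows from $(\ref{6.7})$ by the usual argument (approximate each profile by a function localized in space-time frequency and in physical space, and note orthogonal parameters separate these localizations). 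The main obstacle, and the step I would treat with the most care, is precisely the refined/inverse Strichartz pair in the radial intercritical setting: one must produce a genuinely weaker right-hand norm while keeping both sides scaling-critical and run the concentration extraction in $\dot H^{s_c}$ rather than $\dot H^1$ — this is exactly what is imported from \cite{ramos2012refinement} and used in \cite{dodson2018global}, so in the write-up I would state it carefully, cite those sources, and then carry out the (routine) iteration, orthogonality, and decoupling above.
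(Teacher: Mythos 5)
Your proposal is correct and matches the paper's approach: the paper does not prove Theorem~\ref{t6.1} itself but cites \cite{ramos2012refinement}, and your sketch of the standard Bahouri--G\'erard extraction (inverse Strichartz, inductive bubble removal, weak-limit orthogonality, Pythagorean decoupling, and almost-orthogonality of the $L^q_{t,x}$ norm) correctly identifies and defers the sole non-routine ingredient -- the radial intercritical refined/inverse Strichartz estimate -- to that same reference.
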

\begin{remark}
For any $N$, $\lim_{n \rightarrow \infty} o_{n}(1) = 0$.
\end{remark}

In the course of proving Theorem $\ref{t6.1}$, \cite{ramos2012refinement} proved
\begin{equation}\label{6.10}
S(\lambda_{n}^{j} t_{n}^{j})(\frac{1}{(\lambda_{n}^{j})^{\frac{2}{p - 1}}} u_{0}^{n}(\frac{x}{\lambda_{n}^{j}}), \frac{1}{(\lambda_{n}^{j})^{\frac{p + 1}{p - 1}}} u_{1}^{n}(\frac{x}{\lambda_{n}^{j}})) \rightharpoonup \phi_{0}^{j}(x),
\end{equation}
weakly in $\dot{H}^{s_{c}}(\mathbb{R}^{d})$, and
\begin{equation}\label{6.11}
\partial_{t}S(t + \lambda_{n}^{j} t_{n}^{j})(\frac{1}{(\lambda_{n}^{j})^{\frac{2}{p - 1}}} u_{0}^{n}(\frac{x}{\lambda_{n}^{j}}), \frac{1}{(\lambda_{n}^{j})^{\frac{p + 1}{p - 1}}} u_{1}^{n}(\frac{x}{\lambda_{n}^{j}}))|_{t = 0} \rightharpoonup \phi_{1}^{j}(x)
\end{equation}
weakly in $\dot{H}^{s_{c} - 1}(\mathbb{R}^{d})$.\medskip

Suppose that for some $j$, $\lambda_{n}^{j} t_{n}^{j}$ is uniformly bounded. Then after passing to a subsequence, $\lambda_{n}^{j} t_{n}^{j}$ converges to some $t^{j}$. Changing $(\phi_{0}^{j}, \phi_{1}^{j})$ to $(S(-t^{j})(\phi_{0}^{j}, \phi_{1}^{j}), \partial_{t} S(t - t^{j})(\phi_{0}^{j}, \phi_{1}^{j})|_{t = 0})$ and absorbing the error into $(R_{0, n}^{N}, R_{1, n}^{N})$,
\begin{equation}\label{6.12}
\frac{1}{(\lambda_{n}^{j})^{\frac{2}{p - 1}}} u_{0}^{n}(\frac{x}{\lambda_{n}^{j}}) \rightharpoonup \phi_{0}^{j}(x), \qquad \text{weakly in} \qquad \dot{H}^{s_{c}},
\end{equation}
and
\begin{equation}\label{6.13}
\partial_{t}S(t)(\frac{1}{(\lambda_{n}^{j})^{\frac{2}{p - 1}}} u_{0}^{n}(\frac{x}{\lambda_{n}^{j}}), \frac{1}{(\lambda_{n}^{j})^{\frac{p + 1}{p - 1}}} u_{1}^{n}(\frac{x}{\lambda_{n}^{j}}))|_{t = 0} \rightharpoonup \phi_{1}^{j}(x), \qquad \text{weakly in} \qquad \dot{H}^{s_{c} - 1}.
\end{equation}
If $u^{(j)}$ is the solution to $(\ref{1.1})$ with initial data $(\phi_{0}^{j}, \phi_{1}^{j})$, then
\begin{equation}\label{6.14}
\| u^{(j)} \|_{L_{t,x}^{q}(\mathbb{R} \times \mathbb{R}^{d})} \leq M_{j}.
\end{equation}

Next, suppose that after passing to a subsequence, $\lambda_{n}^{j} t_{n}^{j} \nearrow +\infty$. In this case, for any $(\phi_{0}^{j}, \phi_{1}^{j}) \in \dot{H}^{s_{c}} \times \dot{H}^{s_{c} - 1}$, there exists a solution $u^{(j)}$ to $(\ref{1.1})$ that is globally well-posed and scattering, and furthermore, that $u$ scatters to $S(t)(\phi_{0}^{j}, \phi_{1}^{j})$ as $t \searrow -\infty$.
\begin{equation}\label{6.15}
\lim_{t \rightarrow -\infty} \| u^{(j)}(t) - S(t)(\phi_{0}^{j}, \phi_{1}^{j}) \|_{\dot{H}^{s_{c}} \times \dot{H}^{s_{c} - 1}} = 0.
\end{equation}
Indeed, by Strichartz estimates, the dominated convergence theorem, and the small data arguments in Theorem $\ref{t4.2}$, for some $T_{j} < \infty$ sufficiently large, $(\ref{1.1})$ has a solution $u$ on $(-\infty, -T]$ such that
\begin{equation}\label{6.16}
\aligned
\| |\nabla|^{s_{c} - \frac{1}{2}} u^{(j)} \|_{L_{t,x}^{\frac{2(d + 1)}{d - 1}}((-\infty, -T_{j}] \times \mathbb{R}^{d})} + \| u^{(j)} \|_{L_{t,x}^{q}((-\infty, -T_{j}] \times \mathbb{R}^{d})} \lesssim \epsilon_{0}(d), \\ (u^{(j)}(-T_{j}, x), u_{t}^{(j)}(-T_{j}, x)) = S(-T_{j})(\phi_{0}^{j}, \phi_{1}^{j}),
\endaligned
\end{equation}
where $\epsilon_{0}(d) > 0$ is sufficiently small. Also by Strichartz estimates and small data arguments,
\begin{equation}\label{6.17}
\lim_{t \rightarrow +\infty} \| S(t)(u^{(j)}(-t), u_{t}^{(j)}(-t)) - (\phi_{0}, \phi_{1}) \|_{\dot{H}^{s_{c}} \times \dot{H}^{s_{c} - 1}} \lesssim \epsilon^{q}.
\end{equation}
Then by the inverse function theorem, there exists some $(u_{0}^{(j)}(-T_{j}), u_{1}^{(j)}(-T_{j}))$ such that $(\ref{1.1})$ has a solution that scatters backward in time to $S(t)(\phi_{0}^{j}, \phi_{1}^{j})$. Since $u_{0}^{(j)}(-T_{j}) \in \dot{H}^{s_{c}}$ and $u_{1}^{(j)}(-T_{j}) \in \dot{H}^{s_{c} - 1}$, $(\ref{1.1})$ has a solution that scatters forward and backward in time,
\begin{equation}\label{6.18}
\| u^{(j)} \|_{L_{t,x}^{q}(\mathbb{R} \times \mathbb{R}^{d})} \leq M_{j} < \infty,
\end{equation}
and $u^{(j)}(-T_{j}, x) = u_{0}^{(j)}(-T_{j}, x)$, $u_{t}^{(j)}(-T_{j}, x) = u_{1}^{(j)}(-T_{j}, x)$. Therefore,
\begin{equation}\label{6.19}
S(-t_{n}^{j})((\lambda_{n}^{j})^{\frac{2}{p - 1}} \phi_{0}^{j}(\lambda_{n}^{j} x), (\lambda_{n}^{j})^{\frac{p + 1}{p - 1}} \phi_{1}^{j}(\lambda_{n}^{j} x))
\end{equation}
converges strongly to
\begin{equation}\label{6.20}
((\lambda_{n}^{j})^{\frac{2}{p - 1}} u^{(j)}(-\lambda_{n}^{j} t_{n}^{j}, \lambda_{n}^{j} x), (\lambda_{n}^{j})^{\frac{p + 1}{p - 1}} u_{t}^{(j)}(-\lambda_{n}^{j} t_{n}^{j}, \lambda_{n}^{j} x))
\end{equation}
in $\dot{H}^{s_{c}} \times \dot{H}^{s_{c} - 1}$, where $u^{j}$ is the solution to $(\ref{1.1})$ that scatters backward in time to $S(t)(\phi_{0}^{j}, \phi_{1}^{j})$, and the remainder may be absorbed into $(R_{0, n}^{N}, R_{1, n}^{N})$. 


The proof for $\lambda_{n}^{j} t_{n}^{j} \searrow -\infty$ is similar.\medskip

By $(\ref{6.8})$, there are only finitely many $j$, say $J$, such that $\| \phi_{0}^{j} \|_{\dot{H}^{s_{c}}} + \| \phi_{1}^{j} \|_{\dot{H}^{s_{c} - 1}} > \epsilon_{0}(d)$. For all other $j$, small data arguments imply
\begin{equation}\label{6.22}
\| u^{(j)} \|_{L_{t,x}^{q}(\mathbb{R} \times \mathbb{R}^{d})} \lesssim \| \phi_{0}^{j} \|_{\dot{H}^{s_{c}}} + \| \phi_{1}^{j} \|_{\dot{H}^{s_{c} - 1}}.
\end{equation}
Then by the decoupling property $(\ref{6.7})$, $(\ref{6.8})$, $(\ref{6.12})$, $(\ref{6.22})$, and Theorem $\ref{t4.2}$,
\begin{equation}\label{6.23}
\limsup_{n \nearrow \infty} \| u^{n} \|_{L_{t,x}^{q}(\mathbb{R} \times \mathbb{R}^{d})}^{2} \lesssim \sum_{j} \| u^{(j)} \|_{L_{t,x}^{q}(\mathbb{R} \times \mathbb{R}^{d})}^{2} \lesssim \sum_{j = 1}^{J} M_{j}^{2} + A^{2} < \infty.
\end{equation}
This proves that $(\ref{6.2})$ holds.
\end{proof}

\section*{Acknowledgements}
During the time of writing this paper, the author was partially supported by NSF Grant DMS-$2153750$. The author is also grateful to Andrew Lawrie and Walter Strauss for some helpful conversations at MIT on subcritical nonlinear wave equations.

\bibliography{biblio}
\bibliographystyle{alpha}

\end{document}